\newtheorem{theorem}{Theorem}[section]
\newtheorem{lemma}{Lemma}[section]
\newtheorem{proposition}{Proposition}[section]
\newtheorem{corollary}{Corollary}[section]
\newtheorem{conjecture}{Conjecture}[section]
\theoremstyle{definition}
\newtheorem{definition}{Definition}[section]
\newtheorem{remark}{Remark}[section]
\numberwithin{equation}{section}
\newcommand{\cone}{\Span_{\QQ_{\geq0}}}
\newcommand{\rank}{\operatorname{rank}}
\newcommand{\CC}{\mathbb C}
\newcommand{\PP}{\mathbb P}
\newcommand{\ZZ}{\mathbb Z}
\newcommand{\QQ}{\mathbb Q}
\newcommand{\calC}{\mathcal C}
\newcommand{\calD}{\mathcal D}
\newcommand{\calM}{\mathcal M}
\newcommand{\calN}{\mathcal N}
\newcommand{\calV}{\mathcal V}
\newcommand{\calU}{\mathcal U}
\newcommand{\scrP}{\mathscr P}
\newcommand{\N}{\mathrm N}
\newcommand{\GL}{\mathrm{GL}}
\newcommand{\SL}{\mathrm{SL}}
\newcommand{\SO}{\mathrm{SO}}
\newcommand{\Sp}{\mathrm{Sp}}
\newcommand{\Id}{\mathrm{Id}}
\newcommand{\Span}{\mathrm{span}}
\newcommand{\<}{\langle}
\renewcommand{\>}{\rangle}
\begin{document}

\title[The generalized Mukai conjecture]{The generalized Mukai conjecture for spherical varieties with a reductive general isotropy group}
\author{P.\ Bravi and G.\ Pezzini}
\address{Dipartimento di Matematica Guido Castelnuovo\\
Sapienza Universit\`a di Roma\\
Piazzale Aldo Moro 5\\
00185, Roma\\
Italy}
\email{bravi@mat.uniroma1.it}
\email{pezzini@mat.uniroma1.it}

\maketitle

\begin{abstract}
In the case of spherical varieties with reductive general isotropy group we prove a conjecture of G.~Gagliardi and J.~Hofscheier, which implies the generalized Mukai conjecture of L.~Bonavero, C.~Casagrande, O.~Debarre and S.~Druel for these varieties.
\end{abstract}

\section{Introduction}

The generalized Mukai conjecture by Bonavero, Casagrande, Debarre and Druel is the following.

\begin{conjecture}\cite{BCDD}\label{GMC}
Let $X$ be a smooth Fano variety. Let $\rho_X$ denote the Picard number of $X$ and $\iota_X$ its \emph{pseudo-index}, that is,
\[\iota_X=\min\{-K_X\cdot C\ :\ C\mbox{ is a rational curve in }X\}\]
where $-K_X$ is the anticanonical divisor of $X$.
Then 
\[\rho_X(\iota_X-1)\leq\dim X\]
and equality holds if and only if $X$ is isomorphic to $(\mathbb P^{\iota_X-1})^{\rho_X}$.
\end{conjecture}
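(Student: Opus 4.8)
The natural approach is induction on the Picard number $\rho_X$, using Mori's structure theory of Fano varieties; for the induction to close one should actually prove the statement for all $\QQ$-factorial terminal Fano varieties, since a Mori contraction of a smooth variety need not have a smooth target.

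\emph{Base case.} For $\rho_X=1$ the inequality reads $\iota_X\leq\dim X+1$ and the equality case asserts $X\cong\PP^{\dim X}$. For Fano varieties this is a theorem: it is the characterization of projective space by its pseudo-index, due to Cho--Miyaoka--Shepherd-Barron and Kebekus (refining the Kobayashi--Ochiai theorem for the index). Thus the base case, together with the classification of the equality case, is already available.

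\emph{Inductive step, fibre-type contractions.} Let $\rho_X\geq 2$. By the Cone Theorem $\overline{NE}(X)$ is rational polyhedral and spanned by $K_X$-negative extremal rays; fix one, say $R$, with length $\ell(R)\geq\iota_X$, and let $\varphi=\varphi_R\colon X\to Y$ be its contraction, so $\rho_Y=\rho_X-1$ and $Y$ is again $\QQ$-factorial Fano. Assume first that $\varphi$ is of fibre type, with general fibre $F$; then $F$ is a smooth, rationally connected Fano variety, $\dim X=\dim Y+\dim F$, and by adjunction $-K_X|_F\simeq -K_F$, so every rational curve of $F$ gives $\iota_X\leq\iota_F$. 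One also needs a comparison $\iota_X\leq\iota_Y$, obtained by lifting a minimal rational curve of $Y$ to $X$ (a point where rational connectedness of the fibres is used and which requires care); granting it, the inductive hypotheses for $Y$ and for $F$ yield
\[
\rho_X(\iota_X-1)=(\rho_Y+1)(\iota_X-1)=\rho_Y(\iota_X-1)+(\iota_X-1)\leq\rho_Y(\iota_Y-1)+(\iota_F-1)\leq\dim Y+\dim F=\dim X .
\]
If equality holds throughout, the base case forces $F\cong\PP^{\iota_X-1}$ and the inductive hypothesis forces $Y\cong(\PP^{\iota_X-1})^{\rho_Y}$, so $X$ is a $\PP^{\iota_X-1}$-bundle over a product of projective spaces; a splitting argument then shows the bundle is trivial and $X\cong(\PP^{\iota_X-1})^{\rho_X}$.

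\emph{Main obstacle.} The real difficulty is the complementary case, in which $\rho_X\geq 2$ but every extremal contraction of $X$ is birational (divisorial or small): then there is no fibration along which to drop the Picard number, and the mechanism above does not apply. Here one would bring in the Ionescu--Wi\'sniewski inequality, bounding from below the dimension of the positive-dimensional fibres of $\varphi_R$ in terms of $\ell(R)\geq\iota_X$, together with a close study of the restriction of $-K_X$ to the exceptional locus $E$ and to the fibres of $\varphi_R|_E$, so as to manufacture enough low-degree rational curves to force $\iota_X$ to be small when $\rho_X$ is large, and to exclude equality outside products of projective spaces. Carrying this out uniformly in arbitrary dimension is exactly where a complete argument is presently lacking, and I expect it to be the crux of any proof of the conjecture in full generality.
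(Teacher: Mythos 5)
You are attempting to prove a statement that this paper never proves and never claims to prove: Conjecture~\ref{GMC} is quoted from \cite{BCDD} precisely as an open conjecture, and the paper's actual contribution is much narrower. The authors follow the Gagliardi--Hofscheier strategy: they do not touch Mori theory at all, but instead use the combinatorial reduction of \cite{GH}, by which Conjecture~\ref{GMC} for a \emph{smooth spherical Fano} variety follows from Conjecture~\ref{GHC}, reformulated as the inequality~\eqref{ineq} on complete spherical skeletons; they then verify~\eqref{ineq}, and classify the equality cases, by a case-by-case analysis over the list of spherically closed reductive spherical subgroups of \cite{BP15}, plus a conceptual argument for spherical modules. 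The outcome is the generalized Mukai conjecture only for spherical varieties with reductive general isotropy group, not for arbitrary smooth Fano varieties.

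Your proposal, as you yourself concede, is not a proof: the entire case in which every extremal contraction of $X$ is birational is left open, and that is exactly where the conjecture has resisted all attempts in arbitrary dimension (the known results \cite{ACO, Ca, P, F, CPS, R} all impose strong extra hypotheses such as low dimension, large pseudo-index, toric or spherical structure, etc.). Even the fibre-type branch of your induction has unproved steps: the comparison $\iota_X\leq\iota_Y$ requires producing a rational curve in $X$ dominating a minimal curve of $Y$ with controlled $-K_X$-degree, which is not automatic; the induction is run in the $\QQ$-factorial terminal category, where the base case (characterization of $\PP^n$ by pseudo-index) and the adjunction/lifting arguments you invoke are stated for smooth varieties; and the equality case needs an actual argument that the resulting $\PP^{\iota_X-1}$-fibration is a trivial bundle, not just the assertion that ``a splitting argument then shows'' it. So the gap is not a technical detail to be patched: your outline reproduces the standard inductive framework and stops exactly at the known obstruction, whereas the paper sidesteps the problem entirely by restricting to spherical varieties and converting the question into a finite combinatorial verification on spherical skeletons.
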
 

It has been analyzed and proven in many cases, see \cite{ACO, Ca, P, GH, F, CPS, R} among others.

Gagliardi and Hofscheier considered the case where $X$ is spherical and made the following, which we will call the Gagliardi-Hofscheier conjecture (for all notations we refer to Section~\ref{s:GHconj}).

\begin{conjecture}\cite{GH}\label{GHC}
Let $X$ be a complete spherical variety. Let $\mathscr P(X)$ denote the supremum of
\[\sum_{D\in\mathcal D}(m_D-1+\langle\rho'(D),\theta\rangle)\]
for $\theta\in\mathrm{cone}(\Sigma)$ such that $\langle\rho'(D),\theta\rangle\geq-m_D$ for all $D\in\mathcal D$,
where $\mathcal D$ is the set of $B$-invariant prime divisors of $X$ and $-K_X=\sum_{D\in\mathcal D}m_D D$. 
Then 
\[\mathscr P(X)\leq \dim X - \rank X\]
and equality holds if and only if $X$ is isomorphic to a toric variety.
\end{conjecture}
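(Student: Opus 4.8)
The plan is to recast $\scrP(X)$ as a linear program, reduce to a minimal complete model of the open orbit, and compare the optimum with an explicit combinatorial expression for $\dim X-\rank X$. Write $G/H_0$ for the open orbit of $X$, so that $H_0$ is reductive and $G/H_0$ is affine.

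First I would note that both the functional $\theta\mapsto\sum_{D\in\mathcal D}(m_D-1+\langle\rho'(D),\theta\rangle)$ and its domain depend on $X$ only through its colored fan, and that refining the fan both enlarges $\mathcal D$ by $G$-invariant divisors $E$, whose images $\rho'(E)$ lie in $\invval$ and hence contribute nonpositive terms on $\mathrm{cone}(\Sigma)$, and adds the constraints $\langle\rho'(E),\theta\rangle\ge-1$; thus $\scrP$ is non-increasing under refinement. It therefore suffices to bound $\scrP(X)$ for the finitely many minimal complete models of $G/H_0$. Because $G/H_0$ is affine, each of these carries a $G$-invariant boundary divisor, the minimal models are toroidal or nearly so, and the principal one — after possibly passing to the spherical closure $\overline{H_0}$, which is again reductive — is the wonderful compactification $\mathbb X$, whose boundary divisors $E_1,\dots,E_r$ ($r=\rank X$) are indexed by the spherical roots $\Sigma=\{\sigma_1,\dots,\sigma_r\}$, with the $\rho'(E_i)$ forming the dual basis. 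I would run the argument first for $X=\mathbb X$ and then extend it to the other minimal models and to the passage to $\overline{H_0}$ by the same bookkeeping; the equality assertion will be extracted from which of these reduction steps are strict.

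For the wonderful model, writing $\theta=\sum_i t_i\sigma_i\in\mathrm{cone}(\Sigma)$, the constraint coming from $E_i$ reads $t_i\le1$, while $t_i\ge0$ and the colors impose linear inequalities with integral coefficients, so the admissible set is a lattice polytope inside $[0,1]^r$, and the objective becomes $\sum_{D\in\mathcal D}(m_D-1)+\sum_i c_i t_i$ with $c_i:=\bigl(\sum_{D\text{ a color}}\langle\rho'(D),\sigma_i\rangle\bigr)-1$. The second ingredient is the identity
\[\dim X-\rank X=\sum_{D\in\mathcal D}(m_D-1)+\sum_{i=1}^{r}c_i,\]
which I would deduce from Brion's formula for $-K_X$ and standard facts about the $B$-orbit structure of $G/H_0$. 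Granting it, the bound $\scrP(X)\le\dim X-\rank X$ is equivalent to $\sum_i c_i(1-t_i)\ge0$ on the polytope, and since $t_i\le1$ this is immediate as soon as every $c_i\ge0$, i.e. $\sum_{D\text{ a color}}\langle\rho'(D),\sigma_i\rangle\ge1$ for each spherical root.

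I expect \emph{this positivity, and the handling of spherical roots for which $c_i$ could be negative, to be the main obstacle}: one must show that an affine spherical homogeneous space either has all $c_i\ge0$, or else its color constraints exclude precisely the vertices ($t_i=1$ for $c_i>0$, $t_j=0$ for $c_j<0$) at which $\sum_i c_i(1-t_i)$ would turn negative. Proving this dichotomy is where Luna's axioms for spherical systems and the classification of reductive-stabilizer (equivalently, affine) spherical homogeneous spaces come in; concretely I would localize at each spherical root, reduce to the finitely many rank-one and a short list of rank-two affine cases, verify the statement there by inspection, and glue, using that the colors associated with distinct spherical roots interact in a controlled way. For the equality statement, if $\scrP(X)=\dim X-\rank X$ then the above forces the colored fan of $X$ to be minimal, every $c_i$ to be nonnegative, the optimum to be attained at $t_i=1$ for all $i$ with $c_i>0$, and every color constraint to be slack there; feeding this back into the classification leaves only the configurations in which each localization is a toric surface and the colors add nothing, whence the spherical datum of $G/H_0$ — and thus $X$ — is toric. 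The reverse implication, that toric $X$ attain equality, follows from the same reduction or by a direct computation.
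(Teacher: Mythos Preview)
Your plan contains a genuine gap at its core: the identity
\[
\dim X-\rank X=\sum_{D\in\mathcal D}(m_D-1)+\sum_{i=1}^{r}c_i,
\qquad c_i=\Big(\sum_{D\in\Delta}\langle\rho'(D),\sigma_i\rangle\Big)-1,
\]
is false. In your setup for the wonderful compactification this identity amounts to the claim that $p_{\mathbb X}(\sigma_1+\cdots+\sigma_r)=\dim X-\rank X$, and this already fails in simple examples. Take $H=\N(\GL(p))\subset\SO(2p+1)$ (case~33 of the paper): here $S^p=\emptyset$, $|\Sigma|=p$, all colours have $m_D\le 2$, so $\sum_{D}(m_D-1)=p-1$, while a direct computation gives $\sum_i c_i=-(p-1)$; your identity would yield $0$, whereas $\dim X-\rank X=|R^+|=p^2$. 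The same failure occurs in the spherical-module case $\GL(1)\times\Sp(2p)\subset\Sp(2p+2)$ (case~36), where your formula gives $2p-1$ but $\dim X-\rank X=4p$. No appeal to Brion's anticanonical formula will rescue this, because that formula only fixes the $m_D$, not the value of $p_X$ at the point $\sum_i\sigma_i$.

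There is a second, related problem with the reduction step. You correctly observe that $\scrP$ is non-increasing under refinement, so it suffices to bound $\scrP$ on minimal complete models; but you then effectively work only on the wonderful compactification $\mathbb X$. This is not enough: $\mathbb X$ has the \emph{largest} possible $\Gamma$ (one $G$-stable divisor per spherical root), hence typically the \emph{smallest} $\scrP$ among minimal models, not the largest. In case~36, the wonderful skeleton has $\scrP(\mathbb X)=2p-1$, yet there is another minimal complete model with a single $G$-stable divisor (supported on $\gamma_2$) for which $\scrP=4p=\dim X-\rank X$. Your cube $[0,1]^r$ and the objective $\sum_i c_it_i$ are therefore specific to the easiest minimal model; the hard ones have unbounded $t_i$ in some directions and require a completely different analysis. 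The paper proceeds in the opposite direction: it passes to the associated spherical skeleton, replaces $\Gamma$ by a \emph{reduced elementary} $\Gamma^r$ of \emph{minimal} support (operations that \emph{increase} $\scrP$), and then runs a case-by-case linear-programming check over the classification of reductive spherically closed subgroups. The equality analysis likewise hinges on identifying the extremal skeletons with those of spherical modules (Section~4), not on a local rank-one/two gluing. Your proposed localisation to rank $\le 2$ would, even if the identity held, miss exactly the global linear-programming constraints that the paper exploits.
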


Moreover, they proved the following.

\begin{theorem}\cite{GH}
Let $X$ be a smooth spherical Fano variety. If Conjecture \ref{GHC} holds for $X$ then Conjecture \ref{GMC} also holds for $X$. 
\end{theorem}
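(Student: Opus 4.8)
The plan is to deduce Conjecture~\ref{GMC} for $X$ from Conjecture~\ref{GHC} through the purely combinatorial inequality
\[
\rho_X(\iota_X-1)\ \le\ \mathscr{P}(X)+\rank X,
\]
which I claim holds for \emph{every} smooth complete spherical Fano variety, with no assumption on the isotropy group. Granting it, the inequality of Conjecture~\ref{GMC} is immediate from that of Conjecture~\ref{GHC}:
\[
\rho_X(\iota_X-1)\ \le\ \mathscr{P}(X)+\rank X\ \le\ (\dim X-\rank X)+\rank X\ =\ \dim X.
\]
For the equality part, the implication ``$X\cong(\mathbb{P}^{\iota_X-1})^{\rho_X}\ \Rightarrow$ equality'' is an immediate numerical check, so assume $\rho_X(\iota_X-1)=\dim X$. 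Then every inequality in the last display becomes an equality; in particular $\mathscr{P}(X)=\dim X-\rank X$, so by the equality case of Conjecture~\ref{GHC} the variety $X$ is toric, and the case of Conjecture~\ref{GMC} for toric varieties, which is known \cite{Ca}, gives $X\cong(\mathbb{P}^{\iota_X-1})^{\rho_X}$.

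It remains to prove the displayed inequality, and I would first rewrite it combinatorially. For any complete spherical variety there is an exact sequence $0\to\mathcal{M}_{\QQ}\to\QQ^{\mathcal{D}}\to\mathrm{Pic}(X)_{\QQ}\to 0$, where the first map sends a $B$-weight to the vector of its multiplicities along the $B$-invariant prime divisors; injectivity on the left follows from completeness, and exactness from the fact that the colors and the $G$-stable prime divisors generate the divisor class group, which for smooth $X$ equals $\mathrm{Pic}(X)$. Hence $\rho_X=|\mathcal{D}|-\rank X$. Substituting this and cancelling $|\mathcal{D}|$, the inequality becomes the existence of $\theta\in\mathrm{cone}(\Sigma)$ with $\langle\rho'(D),\theta\rangle\ge -m_D$ for all $D\in\mathcal{D}$ and
\[
\sum_{D\in\mathcal{D}}\bigl(m_D+\langle\rho'(D),\theta\rangle\bigr)\ \ge\ \rho_X\,\iota_X.
\]
Since $\iota_X=\min\bigl\{\,\sum_{D}m_D\,(D\cdot C)\ :\ C\text{ a rational curve}\,\bigr\}$, the right-hand side is bounded above by the sum of the anticanonical degrees of any $\rho_X$ rational curves; so it suffices to produce a feasible $\theta$ together with $\rho_X$ well-chosen $B$-stable rational curves on $X$ for which the displayed left-hand side dominates the sum of their anticanonical degrees.

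To carry this out I would use Brion's classification of $B$-stable curves in a complete spherical variety and the attendant formulas for $D\cdot C$ in terms of the colors, the $G$-stable divisors, the spherical roots and the colored fan: inside each closed $G$-orbit $G/P$ there is one Schubert line for each simple root moving the orbit, and there are curves joining two adjacent $G$-orbits. When $X$ is homogeneous this is the whole story --- $\mathrm{cone}(\Sigma)=\{0\}$ forces $\theta=0$, the coefficients $m_D$ of the colors of $G/P$ are the integers $\langle -K_{G/P},\alpha^{\vee}\rangle$, and the inequality reduces to the elementary fact $k\,(\min_i a_i-1)\le\sum_i(a_i-1)$ --- and in general this handles the ``flag'' contribution at $\theta=0$. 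The remaining contribution, coming from the $G$-stable divisors and the spherical roots, I expect to extract by moving $\theta$ to a vertex of the feasible polytope $\{\theta\in\mathrm{cone}(\Sigma):\langle\rho'(D),\theta\rangle\ge -m_D\}$ at which some constraints $\langle\rho'(D),\theta\rangle=-m_D$ are tight: tightness of such a constraint should correspond to a $B$-stable curve leaving the $G$-orbit attached to $D$, and the objective value at that vertex should match the anticanonical degree of an associated minimal rational curve. The main obstacle is precisely this matching step --- showing the feasible polytope is nonempty, identifying the optimal $\theta$ with the combinatorics of minimal $B$-stable rational curves through the $G$-orbits, and bookkeeping the interaction between the ``toric'' directions, where the relevant bound is the one coming from primitive collections in the colored fan as in \cite{Ca}, and the colors. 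Once that interaction is under control the inequality follows, and hence the theorem.
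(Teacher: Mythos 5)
This theorem is only \emph{cited} from \cite{GH} in the paper you are reading; Bravi--Pezzini do not reprove it, so there is no internal proof to compare yours against. Judged on its own terms, your proposal correctly identifies the right reduction: pass through the intermediate combinatorial bound $\rho_X(\iota_X-1)\le\mathscr P(X)+\rank X$, use the exact sequence $0\to\mathcal M_\QQ\to\QQ^{\mathcal D}\to\mathrm{Pic}(X)_\QQ\to0$ to get $\rho_X=|\mathcal D|-\rank X$, rewrite the target as $\rho_X\,\iota_X\le\sup_\theta\sum_D(m_D+\langle\rho'(D),\theta\rangle)$, and for the equality case feed the conclusion ``$X$ toric'' into Casagrande's theorem. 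All of that is sound and, as far as I can tell, is indeed how the original argument is organized.

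However, the proposal does not actually establish the theorem, because the heart of the matter --- the inequality $\rho_X\,\iota_X\le\sup_\theta\sum_D(m_D+\langle\rho'(D),\theta\rangle)$ --- is not proved. Your last paragraph is a strategy statement, not an argument: you say you ``would use'' Brion's classification of $B$-stable curves, that you ``expect to extract'' the toric contribution by moving $\theta$ to a vertex of the feasible region, and you explicitly flag ``the main obstacle is precisely this matching step'' between tight constraints, minimal rational curves through $G$-orbits, and primitive collections, before concluding ``once that interaction is under control the inequality follows.'' That obstacle is exactly where the content of the Gagliardi--Hofscheier theorem lives; their proof devotes substantial work to constructing, from a smooth $Y$-reflexive polytope, a point of $Q_X^*\cap\mathrm{cone}(\Sigma)$ whose objective value is provably at least $\rho_X\iota_X-|\mathcal D|$, and this requires careful bookkeeping with the colored fan that your sketch only gestures at. Two further small points worth checking before you fill the gap: (i) the identity $\rho_X=|\mathcal D|-\rank X$ needs $\mathrm{Cl}(X)$ torsion-free, which does hold for smooth complete $X$ but should be said; (ii) the equality case needs you to go from the GHC conclusion ``$X$ is (abstractly) isomorphic to a toric variety'' to Casagrande, which is fine since her statement is intrinsic to the variety, but you should be explicit that the toric structure produced need not be compatible with the $G$-action. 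As written, the proposal reproduces the easy reduction and leaves the theorem unproved.
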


Finally, in the same paper, they proved Conjecture \ref{GHC} for the symmetric varieties. 

With the eventual goal of proving the full Gagliardi-Hofscheier conjecture using the classification of spherical verieties (see \cite{LV,K,Lu,Lo,BP} and the references therein), in this paper we verify the conjecture in the case of spherical varieties with reductive general isotropy group. 

\begin{theorem}\label{MT}
The Gagliardi-Hofscheier conjecture holds for spherical varieties with reductive general isotropy group.
\end{theorem}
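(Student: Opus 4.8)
The strategy is to reduce the Gagliardi–Hofscheier conjecture for a spherical variety $X$ with reductive general isotropy group to a combinatorial statement about its spherical datum, and then to exploit the classification of such data. Since $\mathscr P(X)$ depends only on the set $\mathcal D$ of colors together with the data $m_D$, $\rho'(D)$ and on the valuation cone (through $\mathrm{cone}(\Sigma)$, i.e.\ through the set of spherical roots $\Sigma$), the quantity $\mathscr P(X)$ is intrinsic to the spherical homogeneous space $G/H$ and does not depend on the chosen completion. Thus I would first pass to a convenient complete toroidal embedding, where the relevant linear-programming problem becomes transparent: the functional $\theta\mapsto\sum_{D}(m_D-1+\langle\rho'(D),\theta\rangle)$ is linear, the feasible region is the polyhedral cone $\{\theta\in\mathrm{cone}(\Sigma):\langle\rho'(D),\theta\rangle\geq -m_D\ \forall D\}$, and the supremum is attained either at $0$ (giving $\sum_D(m_D-1)=|\mathcal D|$ minus the number of colors) or at a vertex/ray of this region. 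The first key step is therefore to write down $\mathscr P(X)$ explicitly in terms of the Luna datum $(\mathcal M,\Sigma,S^p,\mathbf A)$, using the known formulas for $m_D$ and $\rho'(D)$ in terms of the spherical roots and the colors.

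The second step uses the hypothesis that $H$ is reductive. Spherical subgroups with reductive general isotropy group — equivalently, affine spherical homogeneous spaces — have been classified (via Luna's and subsequent work, including the wonderful/affine classification referenced as \cite{BP}), and more to the point they are built from a short list of "primitive" or "indecomposable" cases by means of two operations: parabolic induction and taking products (localization at subsets of spherical roots that split the datum). I would prove that $\mathscr P$ is compatible with these operations: $\mathscr P$ is insensitive to parabolic induction (the induced colors contribute $m_D-1=0$ and have $\rho'(D)$ orthogonal to $\mathrm{cone}(\Sigma)$, so they drop out, and $\dim X-\rank X$ changes by the dimension of the fiber, which is $\dim G/P$, matching the contribution of the "extra" colors), and $\mathscr P$ is additive under products, as is $\dim-\rank$; moreover the toric equality case is preserved by both operations. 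This reduces the theorem to checking the inequality $\mathscr P\leq\dim-\rank$, with the stated equality characterization, on the finite list of indecomposable affine spherical homogeneous spaces of simple algebraic groups.

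The third and final step is the case-by-case verification on that finite list. For each indecomposable case one has an explicit spherical datum — the set $S$ of simple roots, the spherical roots $\Sigma$, the colors $\mathcal D$ with their pairings $\langle\rho'(D),\sigma\rangle$, and the numbers $m_D$ — all available from the classification tables. The linear program for $\mathscr P$ then has small dimension (bounded by $|\Sigma|=\rank X$), and one checks directly that its optimum does not exceed $\dim X-\rank X$, with equality forcing $G/H$ to be a torus (equivalently $G$ abelian, i.e.\ the trivial spherical datum), which never occurs among the nontrivial indecomposable cases — so in the reductive case equality holds only for genuine toric $X$. I expect the main obstacle to be the second step: proving cleanly that $\mathscr P$ behaves additively/invariantly under the structural operations on spherical data requires careful bookkeeping of which colors are "moved" by parabolic induction and of how $\rho'(D)$ and $m_D$ transform, and one must handle the equality-characterization (the "if and only if" with toric varieties) consistently through these reductions rather than just the inequality. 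The case-by-case step, while finite, is laborious because the number of indecomposable affine spherical spaces is not small and several families (e.g.\ those related to $\mathrm{SL}$, $\mathrm{SO}$, $\mathrm{Sp}$ and the exceptional groups) must each be treated; but each individual check is routine linear programming once the datum is in hand.
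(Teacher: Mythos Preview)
There is a genuine gap at the very first step. You assert that $\mathscr P(X)$ ``is intrinsic to the spherical homogeneous space $G/H$ and does not depend on the chosen completion,'' but this is false: the set $\mathcal D$ over which the sum and the constraints run is $\Delta\cup\Gamma$, and the set $\Gamma$ of $G$-stable prime divisors depends on the chosen complete embedding. Each $D\in\Gamma$ contributes a constraint $\langle\rho'(D),\theta\rangle\geq -1$ and a summand $\langle\rho'(D),\theta\rangle$ to the objective, so both the feasible region and the value of the linear program change with $\Gamma$. The paper's argument is organized precisely around this dependence: it passes to the \emph{spherical skeleton} $\mathcal R=(\Delta,S^p,\Sigma,\Gamma)$, and the main reduction is not parabolic induction on $G/H$ but rather a sequence of simplifications of $\Gamma$ (replacing $\mathcal R$ by its elementary version $\mathcal R^e$ and then its reduced version $\mathcal R^r$, each of which can only increase $\mathscr P$), followed by minimizing the support $\|\Gamma\|$. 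Only after these reductions does the problem become a finite check over the indecomposable spherical systems of \cite{BP15}.

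Your treatment of the equality case is also off. You expect that equality forces $G/H$ to be a torus and that this never occurs among nontrivial indecomposable cases; in fact the paper finds \emph{many} indecomposable non-symmetric cases (31, 32 with $p=2$, 34, 35, 36, 38, 41, 42, 43, 46 with $p=5$, 49) where equality $\mathscr P(\mathcal R)=|R^+\smallsetminus R^+_{S^p}|$ holds for some minimal $\|\Gamma\|$. The correct characterization (Conjecture~\ref{GHC2}) is that equality holds iff the skeleton is isomorphic to that of a \emph{spherical module}, and a substantial part of the proof is devoted to showing that in each equality case the skeleton matches one from Gagliardi's list, and then arguing (using the explicit optimizing point $\theta$ and Lemma~\ref{lemma:interior}) that $\mathcal R=\mathcal R^r$ so that no information was lost in the reductions. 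Without tracking $\Gamma$ and without the spherical-module comparison, your plan cannot reach the equality statement.
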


In Section~\ref{s:GHconj} and Section~\ref{s:skeleton} we recall the basic definitions and the reduction of the Gagliardi-Hofscheier conjecture into combinatorial terms. In Section~\ref{s:modules} we provide a conceptual proof in the case of sperical modules. In Section~\ref{s:proof} we prove Theorem~\ref{MT} with a case-by-case analysis.

\subsubsection*{Acknowledgments} We would like to thank G.~Gagliardi and J.~Hofscheier for sharing with us their ideas on the topic at an early stage of the present work.

\section{Combinatorics of spherical varieties}\label{s:GHconj}

Let us recall here the basic definitions, following mostly the notations of \cite{GH}.

Let $G$ be a connected reductive complex algebraic group, $T$ a maximal torus in $G$ and $B$ a Borel subgroup of $G$ containing $T$. Let $R$ be the root system associated with $G$ and $T$, let $S\subset R$ be the set of simple roots associated with $B$ and $R^+$ the corresponding set of positive roots. 

Let $X$ be a {\em spherical} $G$-variety, that is, a normal irreducible algebraic variety endowed with an algebraic action of $G$ with an open $B$-orbit.  

Let $\mathcal M$ denote {\em weight lattice} of $X$, i.e.\ the lattice of weights of $B$-semi-invariant rational functions on $X$. The rank of $\mathcal M$ is called the {\em rank} of $X$. Consider also its dual $\mathcal N:=\mathrm{Hom}(\mathcal M, \mathbb Z)$ and the natural pairing $\langle\ ,\ \rangle\colon\mathcal N\times\mathcal M\to\mathbb Z$.

Let $\mathcal D$ denote the set of $B$-invariant prime divisors of $X$. For any $D\in\mathcal D$ one has a functional $\rho'(D)$ on $\mathcal M$ defined as $\langle\rho'(D),\chi\rangle=\nu_D(f_\chi)$, the order of vanishing on $D$ of the (uniquely determined up to scalar factor) $B$-semi-invariant rational function $f_\chi$ on $X$ of weight $\chi$.

The set $\mathcal D$ can be partitioned into two subsets: the set $\Gamma$ of $G$-invariant prime divisors and the set $\Delta$ of non-$G$-invariant ones, called {\em colors}, which are the closures of the $B$-invariant prime divisors in the open $G$-orbit $Y$ of $X$. 

For every simple root $\alpha$, let $\Delta(\alpha)$ denote the set of colors which are not invariant under the action of the minimal parabolic subgroup containing $B$ and corresponding to $\alpha$. The cardinality of $\Delta(\alpha)$ is at most two. 

Let $S^p\subset S$ denote the set of simple roots $\alpha$ such that $\Delta(\alpha)=\emptyset$. This coincides with the set of simple roots corresponding to the parabolic subgroup of $G$ stabilizing the open $B$-orbit. 

Let us denote by $H$ the stabilizer of a point in the open $G$-orbit $Y$. Then $H$ is called a {\em spherical subgroup} of $G$, and the normalizer $N_G(H)$ of $H$ in $G$ acts naturally by $G$-equivariant automorphisms of $Y$. Therefore $N_G(H)$ stabilizes the open $B$-orbit of $Y$ and acts by permutation of the set $\Delta$ of its colors, the subgroup $H$ acting trivially. The kernel of the permutation action is called the \emph{spherical closure} $\overline H$ of $H$. Clearly $\overline H$ is spherical too and one has $\overline{\overline H}=\overline H$. A spherical subgroup $H$ is called \emph{spherically closed} if $\overline H = H$.

All $G$-invariant discrete valuations on the field of rational functions on $X$ induce elements of $\mathcal N_{\mathbb Q}$ in the same way as $\nu_D$ above. These elements form a convex polyhedral cone $\mathcal V\subset \mathcal N_{\mathbb Q}$, i.e.\ there exists a minimal finite subset $\Sigma$ of $\mathcal M$ such that
\[
\mathcal V=\{\nu\in\mathcal N_{\mathbb Q}\ :\ \langle \nu,\gamma\rangle\leq 0,\ \forall\, \gamma\in\Sigma\}.
\] 
The elements of $\Sigma$ are uniquely determined up to normalization: if they are required to be primitive in $\mathcal M$ then they are called the \emph{spherical roots} of $X$. In this paper we use a common and slightly different normalization, namely $\Sigma$ will denote the set of spherical roots of the homogeneous space $G/\overline H$.

With this choice the elements of $\Sigma$ are called \emph{spherically closed spherical roots} of $X$. They are known to be linearly independent (that is, the cone $\mathcal V$ is cosimplicial) and of special kind. In fact for any reductive group $G$ there is a finite list of possible elements of the root lattice, actually positive roots or sum of two positive roots, which can be spherically closed spherical roots of a spherical $G$-variety, see \emph{loc.cit.}  

The last notation we introduce for $X$ regards an anticanonical divisor, which is obtained, by \cite{B}, as
\[-K_X=\sum_{D\in\mathcal D}m_DD,\]
where the $m_D$'s are positive integers (well-)defined as follows:
\begin{itemize}
\item[-] if $D$ is $G$-invariant then $m_D=1$,
\item[-] if $D\in\Delta(\alpha)$ for a simple root $\alpha$ belonging to $\Sigma$ or $\frac12\Sigma$, then $m_D=1$,
\item[-] if $D\in\Delta(\alpha)$ for a simple root $\alpha$ belonging neither to $\Sigma$ nor to $\frac12\Sigma$, then  $m_D=\langle\alpha^\vee,2\rho_S-2\rho_{S^p}\rangle$, where $2\rho_S$ and $2\rho_{S^p}$ denote the sum of the positive roots supported respectively in $S$ and $S^p$.
\end{itemize} 

We will consider polytopes $Q\subset\mathcal N_{\mathbb Q}$ and their duals $Q^*\subset\mathcal M_{\mathbb Q}$ defined by the inequalities $\langle \nu, \mu \rangle \geq -1$, for all $\nu\in Q$. A vertex $\mu$ of $Q^*$ is called \emph{supported} if the intersection of $\mu+ \mathrm{cone}(\Sigma)$ with $Q^*$ equals $\{\mu\}$.

Since all objects defined so far essentially only depend on the open $G$-orbit $Y$, we can give the following definition.

\begin{definition}\cite{P, GH15}
A polytope $Q\subset \mathcal N_{\mathbb Q}$ is called $Y$-reflexive if:
\begin{itemize}
\item[-] for all $D\in\Delta$, $\rho'(D)/m_D\in Q$,
\item[-] the origin $0\in\mathcal N$ is contained in the interior of $Q$,
\item[-] every vertex of $Q$ belongs to $\{\rho'(D)/m_D\ :\ D\in\Delta\}$ or to $\mathcal N\cap \mathcal V$,
\item[-] every supported vertex of $Q^*$ belongs to $\mathcal M$.
 \end{itemize}
\end{definition}

One can see that if $X$ is a Gorenstein spherical Fano $G$-variety with open $G$-orbit $Y$ then the convex hull of $\{\rho'(D)/m_D\ :\ D\in\mathcal D\}$ is a $Y$-reflexive polytope and moreover:

\begin{theorem}\cite{GH15}
The correspondence $X\mapsto\mathrm{conv}\{\rho'(D)/m_D\ :\ D\in\mathcal D\}$ gives a bijection between Gorenstein spherical Fano $G$-varieties with open $G$-orbit $Y$, up to $G$-equivariant isomorphism, and $Y$-reflexive polytopes.  
\end{theorem}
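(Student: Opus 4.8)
The plan is to realize the correspondence as the spherical analogue of the toric dictionary ``reflexive polytope $\leftrightarrow$ Gorenstein Fano toric variety'', using the Luna--Vust classification \cite{LV}: the $G$-equivariant embeddings of the fixed homogeneous space $Y=G/H$, up to $G$-equivariant isomorphism, are parametrized by the colored fans in $\mathcal N_{\mathbb Q}$ built from the colors $\Delta$ and the valuation cone $\mathcal V$ of $Y$, and an embedding is complete exactly when the supports of its colored cones cover $\mathcal V$. The inverse of $X\mapsto Q_X:=\mathrm{conv}\{\rho'(D)/m_D\ :\ D\in\mathcal D\}$ is meant to be the \emph{face-fan} construction: to a $Y$-reflexive polytope $Q$ attach the colored fan $\mathbb F_Q$ whose maximal members are the pairs $(\mathrm{cone}(F),\{D\in\Delta\ :\ \rho'(D)/m_D\in F\})$, with $F$ running over the facets of $Q$ such that the interior of $\mathrm{cone}(F)$ meets $\mathcal V$, completed under colored faces. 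It then suffices to establish: (i) $Q_X$ is $Y$-reflexive whenever $X$ is a Gorenstein spherical Fano $G$-variety with open orbit $Y$; (ii) $\mathbb F_Q$ is the colored fan of a Gorenstein spherical Fano $G$-variety $X_Q$ whenever $Q$ is $Y$-reflexive; (iii) $X\mapsto Q_X$ and $Q\mapsto X_Q$ are mutually inverse.

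For (i): the conditions $\rho'(D)/m_D\in Q_X$ for $D\in\Delta$, and that each vertex of $Q_X$ (being one of the points $\rho'(D)/m_D$) either is such a point with $D\in\Delta$ or lies in $\mathcal N\cap\mathcal V$ (when $D\in\Gamma$, so $m_D=1$ and $\rho'(D)$ is the primitive generator of a ray of $\mathcal V$), are immediate from the definition of $Q_X$. That $0$ is interior to $Q_X$ uses only completeness: were some hyperplane $\{\langle\cdot,\mu\rangle=0\}$ through $0$ to have all the $\rho'(D)/m_D$ weakly on one side, then for a suitable sign $\mathrm{div}(f_{\pm\mu})=\sum_{D\in\mathcal D}\langle\rho'(D),\pm\mu\rangle D$ would be effective, so $f_{\pm\mu}$ would be a global invertible function on the complete variety $X$, i.e.\ a constant, forcing $\mu=0$. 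The genuine content is the fourth condition, that every supported vertex of $Q_X^*$ lies in $\mathcal M$. Here one first shows, using that $\mathcal V$ is full-dimensional and the duality $\mathrm{cone}(\Sigma)=\{\mu\in\mathcal M_{\mathbb Q}\ :\ \langle\nu,\mu\rangle\leq0\ \forall\,\nu\in\mathcal V\}$, that a vertex $u_F$ of $Q_X^*$ (dual to a facet $F$ of $Q_X$) is supported if and only if $\mathrm{cone}(F)$ meets $\mathcal V$ in its interior, i.e.\ if and only if $\mathrm{cone}(F)$ is a maximal colored cone $\mathcal C$ of the fan of $X$; for such $F$, the hyperplane spanned by $F$ is $\{\langle\cdot,u_F\rangle=-1\}$, and $u_F$ coincides with the character $m_{\mathcal C}\in\mathcal M_{\mathbb Q}$ expressing $-K_X$ on the simple open piece attached to $\mathcal C$ (via Brion's divisor theory \cite{B}: $\langle\rho'(D),m_{\mathcal C}\rangle=-m_D$ for the $D$ generating $\mathcal C$), and this $m_{\mathcal C}$ is integral precisely because $-K_X$ is Cartier there, that is, because $X$ is Gorenstein.

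For (ii): one checks that each $(\mathrm{cone}(F),\mathcal F_F)\in\mathbb F_Q$ is an admissible colored cone --- strictly convex since $0\in\mathrm{int}(Q)$, with interior meeting $\mathcal V$ by the choice of $F$, and generated by $\{\rho'(D)\ :\ D\in\mathcal F_F\}$ together with finitely many lattice points of $\mathcal V$, because each vertex of $F$ is a vertex of $Q$ and hence, by the vertex condition, either a point $\rho'(D)/m_D$ with $D\in\Delta$ (whose ray is then covered by a color of $\mathcal F_F$) or an element of $\mathcal N\cap\mathcal V$ --- and that $\mathbb F_Q$ is closed under colored faces and has supports covering $\mathcal V$ (again using $0\in\mathrm{int}(Q)$ and $\mathcal V$ full-dimensional); by \cite{LV} it is then the colored fan of a complete spherical $G$-variety $X_Q$ with open orbit $Y$, whose $B$-invariant prime divisors consist of the colors $\Delta$ together with the $G$-invariant divisors $D_v$ attached to those vertices $v$ of $Q$ lying in $\mathcal N\cap\mathcal V$, with $\rho'(D_v)=v$ and $m_{D_v}=1$. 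For each facet $F$ of $Q$ meeting $\mathcal V$ in the interior of $\mathrm{cone}(F)$, its affine span is $\{\nu\ :\ \langle\nu,u_F\rangle=-1\}$ with $u_F$ a supported vertex of $Q^*$, hence $u_F\in\mathcal M$; since $\rho'(D)/m_D\in F$ for the $D$ generating $\mathrm{cone}(F)$, we get $\langle\rho'(D),u_F\rangle=-m_D$, so by Brion's criterion $-K_{X_Q}=\sum_{D}m_DD$ is Cartier; and the same family $\{u_F\}$, being the vertex set of $Q^*=\{\mu\ :\ \langle\nu,\mu\rangle\geq-1\ \forall\,\nu\in Q\}$, yields a strictly convex support function (as $Q$ is a polytope with $0$ in its interior), which together with the strict inequalities $\langle\rho'(D),u_F\rangle>-m_D$ for the remaining colors $D$ (valid since $\rho'(D)/m_D\in Q$ for every color) gives ampleness of $-K_{X_Q}$. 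Hence $X_Q$ is Gorenstein Fano.

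For (iii): $Q_{X_Q}$ is the convex hull of the points $\rho'(D)/m_D$ over the colors and over the $G$-invariant $D_v$, that is, of a set which by the vertex condition contains all vertices of $Q$ and is contained in $Q$; so $Q_{X_Q}=Q$. Conversely, for $X$ Gorenstein spherical Fano let $h$ be the support function of $-K_X$, with $h|_{\mathcal C}=\langle\cdot,m_{\mathcal C}\rangle$ on each maximal colored cone $\mathcal C$ of the fan of $X$; ampleness makes $h$ strictly convex, so each $\mathcal C$ is exactly the locus where $h$ agrees with $\langle\cdot,m_{\mathcal C}\rangle$ and is generated precisely by the $\rho'(D)$ with $\langle\rho'(D),m_{\mathcal C}\rangle=-m_D$, that is, with $\rho'(D)/m_D$ on the facet $F$ of $Q_X$ dual to $m_{\mathcal C}$, carrying the colors $\{D\in\Delta\ :\ \rho'(D)/m_D\in F\}$; thus $\mathbb F_{Q_X}$ is the colored fan of $X$ and, by \cite{LV}, $X_{Q_X}\cong X$. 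The main obstacle is exactly this dictionary used in (i) and (ii): identifying the \emph{supported} vertices of $Q^*$ with the facets of $Q$ whose cones meet $\mathcal V$ and with the maximal colored cones of the fan, and checking that on the non-supported facets the Cartier condition for $\sum_{D}m_DD$ imposes nothing --- this is what makes integrality of the supported vertices of $Q^*$ precisely equivalent to $X$ being Gorenstein, the remainder being standard toric-style bookkeeping transported through the colored-fan formalism.
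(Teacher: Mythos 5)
This theorem is not proved in the paper at all: it is quoted verbatim from \cite{GH15}, so there is no internal proof to compare your argument with. Your outline follows essentially the same route as the original proof in \cite{GH15} --- Luna--Vust colored fans for embeddings of $Y$, Brion's description of $-K_X$ together with his Cartier and ampleness criteria, and the face-fan construction as inverse, with the supported vertices of $Q^*$ matching the maximal colored cones whose interiors meet $\mathcal V$ --- and, granting the standard Luna--Vust bookkeeping you defer (e.g.\ which rays carry $G$-stable divisors), it is a correct sketch of that proof.
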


Notice that the numerical invariant $\mathscr P(X)$ defined in the introduction is the supremum of the functional
\[\sum_{D\in\mathcal D}(m_D-1+\langle\rho'(D),\theta\rangle)\]
for $\theta$ lying in the intersection of $\mathrm{cone}(\Sigma)$ and the dual of $Q_X:=\mathrm{conv}\{\rho'(D)/m_D\ :\ D\in\mathcal D\}$.

It is easy to see that $\mathscr P(X)\geq 0$, and finite if $X$ is complete. 

\section{The spherical skeleton}\label{s:skeleton}

In order to reduce its proof to a case-by-case analysis, Gagliardi and Hofscheier have translated Conjecture \ref{GHC} in purely combinatorial terms, as follows.

For a spherically closed spherical subgroup $H\subset G$ the datum of $\Delta$, $S^p$ and $\Sigma$  associated with $G/H$ uniquely determines the subgroup $H$ up to $G$-conjugation, or equivalently the homogeneous spherical variety $G/H$, up to $G$-equivariant isomorphism. Moreover, there is an axiomatic definition of \emph{spherically closed spherical system} for $G$ such that the correspondence
\begin{equation}\label{corr2}
H\mapsto  (\Delta, S^p, \Sigma)
\end{equation}
gives a bijection between $G$-conjugacy classes of spherically closed spherical subgroups of $G$ and spherically closed spherical systems for $G$, \cite{Lu, Lo, BP}. 

We are ready to define the main combinatorial object used in this paper, the spherical skeleton, first introduced by R.\ Camus in \cite{C}. Let $X$ be a spherical variety and $(\Delta,S^p,\Sigma)$ the spherically closed spherical system associated with the spherical closure of a principal isotropy group of $X$. We add to this datum a fourth component $\Gamma$ which is the set of $G$-stable prime divisors of $X$, implicitly endowed with the pairing $\rho\colon \Gamma\times \Sigma \to \mathbb Z_{\leq0}$. The quadruple $\mathcal R_X=(\Delta,S^p,\Sigma, \Gamma)$ is called the {\em spherical skeleton} of $X$. The purely combinatorial notion is the following.

\begin{definition}\cite{GH}
A quadruple $\mathcal R =(\Delta, S^p, \Sigma, \Gamma)$, where $(\Delta, S^p, \Sigma)$ is a spherically closed spherical system for $G$ and $\Gamma$ is a finite set endowed with a pairing $\rho\colon \Gamma\times\Sigma\to\mathbb Z_{\leq 0}$, is called a \emph{spherical skeleton} for $G$.  The union $\Delta\cup\Gamma$ is denoted by $\mathcal D$, and the spherical skeleton $\mathcal R=(\Delta, S^p, \Sigma, \Gamma)$ is called \emph{complete} if the cone generated by the $\rho(D)$'s, for $D\in\mathcal D$, covers the whole $(\Span_\QQ\Sigma)^*$.  
\end{definition}

With a spherical skeleton $\mathcal R=(\Delta,S^p,\Sigma,\Gamma)$ we can associate a polytope 
\[Q_{\mathcal R}:=\mathrm{conv}\{\rho(D)/m_D\ :\ D\in\mathcal D\},\]
where $m_D:=1$ for all $D\in\Gamma$, its dual $Q^*_{\mathcal R}\subset(\mathrm{span}\,\Sigma)^*_{\mathbb Q}$, and define $\mathscr P(\mathcal R)$ to be the supremum of 
\[\sum_{D\in\mathcal D}(m_D-1+\langle\rho(D),\theta\rangle)\]
for $\theta\in Q^*_{\mathcal R}\cap\mathrm{cone}(\Sigma)$.

For all spherical $G$-varieties $X$ we have $Q^*_{\mathcal R_X}=Q^*_X\cap\mathrm{span}_{\mathbb Q}\Sigma$ and $\mathscr P(\mathcal R_X)=\mathscr P(X)$, moreover the number $\dim(X)-\rank(X)$ appearing in Conjecture~\ref{GHC} is equal to $|R^+| - |R^+_{S^p}|$, where $R^+_{S^p}$ is the set of positive roots that are in the linear span of $S^p$.

Gagliardi and Hofscheier reformulated their Conjecture~\ref{GHC} also in terms of spherical skeletons and {\em spherical modules}, which are defined as $G$-modules that are also spherical as $G$-varieties. These modules have been classified \cite{Ka, BR, Le} and their spherical skeletons are known \cite{G}. In particular, although a spherical module is not complete, its spherical skeleton turs out to be always complete (see Corollary~\ref{cor:skeletonV}).

In general, two spherical skeletons $\mathcal R_i=(\Delta_i,S^p_i,\Sigma_i,\Gamma_i)$, $i=1,2$, are said to be isomophic if there is an isomorphism between the two underlying root systems $R_1$ and $R_2$, inducing bijections between $S^p_1$ and $S^p_2$ and between $\Sigma_1$ and $\Sigma_2$, together with a compatible bijection between $\mathcal D_1$ and $\mathcal D_2$.  

Gagliardi and Hofscheier proved that Conjecture \ref{GHC} is equivalent to the following

\begin{conjecture}\label{GHC2}\cite{GH}
Let $\mathcal R=(\Delta, S^p,\Sigma,\Gamma)$ be a complete spherical skeleton, then 
\begin{equation}\label{ineq}
\mathscr P(\mathcal R)\leq |R^+| - |R^+_{S^p}|
\end{equation}
and equality holds if and only if $\mathcal R$ is isomorphic to the spherical skeleton $\mathcal R_V$ of a spherical module $V$.  
\end{conjecture}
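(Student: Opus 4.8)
The plan is to prove Conjecture~\ref{GHC2} for every complete spherical skeleton $\mathcal R=(\Delta,S^p,\Sigma,\Gamma)$ whose underlying spherical system is associated with a spherical subgroup whose generic isotropy group is reductive, following the two-step division announced in the introduction: a uniform treatment of spherical modules, then a classification-driven analysis of the general case. Two reductions come first. Both $\mathscr P(\mathcal R)$ and $|R^+|-|R^+_{S^p}|$ are additive under the evident direct product of skeletons (coming from $G=G_1\times G_2$ and a product of homogeneous spaces): indeed $Q^*_{\mathcal R_1\times\mathcal R_2}=Q^*_{\mathcal R_1}\times Q^*_{\mathcal R_2}$, $\mathrm{cone}(\Sigma_1\sqcup\Sigma_2)=\mathrm{cone}(\Sigma_1)\times\mathrm{cone}(\Sigma_2)$, and the functional splits. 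Since a spherical module decomposes into indecomposable ones (up to minor bookkeeping with central tori) and a reductive spherical subgroup of $G$ is built, via the familiar operations (products, diagonal-type subgroups, addition of central tori), from the reductive spherical subgroups of the simple factors, it suffices to treat \emph{primitive} skeletons. Secondly, for a fixed spherical system there are only finitely many $\Gamma$ making $\mathcal R$ complete, so one must bound $\mathscr P(\mathcal R)$ over all of them; here I would first record that the supremum defining $\mathscr P(\mathcal R)$ is attained at a vertex of the bounded polytope $Q^*_{\mathcal R}\cap\mathrm{cone}(\Sigma)$, and that such a vertex is cut out by $\rank\Sigma$ active conditions among the equalities $\langle\rho(D),\theta\rangle=-m_D$ and the facet conditions of $\mathrm{cone}(\Sigma)$, hence — for a primitive skeleton, where $\rank\Sigma$ is small — by a short explicit list.

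For spherical modules (Section~\ref{s:modules}) I would argue conceptually. Using $\dim X-\rank X=|R^+|-|R^+_{S^p}|$ and the fact that for $X=V$ the open orbit is dense, one has $\dim V=\dim(G/H)=(|R^+|-|R^+_{S^p}|)+\rank V$, so the target value equals $\dim V-\rank V$. The scaling $\mathbb{G}_m$-action on $V$ contributes a $G$-invariant degree valuation and, dually, a distinguished element $\theta_0\in\mathrm{cone}(\Sigma)\cap Q^*_{\mathcal R_V}$ reflecting the apex $0\in V$; the crux is that substituting $\theta=\theta_0$ already yields $\sum_{D\in\mathcal D}(m_D-1+\langle\rho(D),\theta_0\rangle)=\dim V-\rank V$, whence, together with the inequality, $\mathscr P(\mathcal R_V)=\dim V-\rank V=|R^+|-|R^+_{S^p}|$. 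The evaluations $\langle\rho(D),\theta_0\rangle$ and the integers $m_D$ are explicit in terms of the highest weights of the summands of $V$, via the known skeletons of spherical modules \cite{G}, so the identity reduces to a finite and essentially formal verification over the indecomposable spherical modules.

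For the general reductive-isotropy case (Section~\ref{s:proof}) I would invoke the classification of reductive spherical subgroups — Kr\"amer's list for $G$ simple, and the standard constructions in general — which, after the reduction to primitive skeletons, leaves a finite list of spherical systems; for each of these one then ranges over the finitely many completions $\Gamma$. Since in all these cases $\rank\Sigma$ is small, $\mathscr P(\mathcal R)$ is computed directly by enumerating the vertices of $Q^*_{\mathcal R}\cap\mathrm{cone}(\Sigma)$ as above and maximizing the linear functional; one then checks that $\mathscr P(\mathcal R)<|R^+|-|R^+_{S^p}|$ unless $\mathcal R$ is isomorphic to the skeleton of a spherical module, in which case equality holds by Section~\ref{s:modules}. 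The symmetric varieties, already settled in \cite{GH}, reappear here as a sublist and serve as a consistency check.

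The main obstacle I expect is twofold. On the optimization side, $\Gamma$ is genuine extra data — a single spherical system carries a whole family of complete skeletons — so one cannot simply fix one skeleton; showing that the bound is controlled by a manageable set of completions, or that $\mathscr P$ does not jump as $\Gamma$ grows, needs care, and the vertex enumeration relies on the combinatorial axioms constraining the pairing of the $\rho(D)$'s with $\Sigma$. On the classification side, the delicate point is the equality statement: excluding accidental equality in all the non-module primitive cases, which is where the bulk of the case analysis concentrates, and reconciling the classification — which is a statement about $H$ — with the fact that the skeleton only remembers the spherical closure $\overline H$, so that the reductivity hypothesis must be tracked carefully through that passage.
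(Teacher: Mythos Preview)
Your overall shape is right --- reduce to indecomposables, settle the spherical-module case, then run a classification analysis --- but two of your reductions fail, and without them the scheme does not go through.

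First, the assertion that for a fixed spherical system there are only finitely many $\Gamma$ making $\mathcal R$ complete is false: $\Gamma$ is an arbitrary finite set equipped with any pairing $\rho\colon\Gamma\times\Sigma\to\mathbb Z_{\le0}$, and once completeness holds it persists under adding more elements or making entries more negative, so there are infinitely many complete skeletons over a given system. The paper's key device here, which your proposal omits, is the passage $\mathcal R\mapsto\mathcal R^e\mapsto\mathcal R^r$ to \emph{elementary} and then \emph{reduced} skeletons, together with the inequality $\mathscr P(\mathcal R)\le\mathscr P(\mathcal R^e)\le\mathscr P(\mathcal R^r)$ and the monotonicity of $\mathscr P$ under shrinking $\|\Gamma\|$. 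This cuts the problem down to the genuinely finite task of checking reduced elementary skeletons with $\Gamma$ of minimal support (a subset of $\Sigma$), and it is also what drives the equality argument: once $\mathscr P(\mathcal R)=|R^+|-|R^+_{S^p}|$, one must climb back from $\mathcal R^r$ to $\mathcal R$ by showing $\mathcal R^e=\mathcal R^r$ and then that $\mathcal R$ is already elementary. The paper does this using that the optimal $\theta$ has full support on $\Sigma$ (Lemma~\ref{lemma:interior}), a point your plan does not provide.

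Second, the claim that ``for a primitive skeleton $\rank\Sigma$ is small'' is wrong: in cases 31, 32, 33, 49, 50 one has $|\Sigma|$ of order $p$, with $p$ unbounded. Vertex enumeration of $Q^*_{\mathcal R}\cap\mathrm{cone}(\Sigma)$ is therefore not a ``short explicit list''. The paper instead passes to the dual linear program (\ref{AYC}) and solves it by recursive inequalities on the dual variables $y_i$, which is what makes the infinite families tractable. Your plan would need either this dual-LP recursion or an equivalent inductive argument; as written, it does not handle these series.
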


\section{Spherical modules}\label{s:modules}

In this section we verify the formula~(\ref{ineq}) in the special case of spherical module $X$, i.e.\ we prove the equality $\mathscr P(X)=\dim(X)-\rank(X)$. We give a conceptual proof, using some considerations on the so-called {\em weight monoid} of $X$ that do not seem to be available in the literature. However it is also possible to verify the equality directly case-by-case using the classification of spherical modules.

The weight monoid of $X$ is the subset $\calM^+\subset\calM$ of the weights of $B$-eigenvectors in the ring of regular functions $\CC[X]$.

For any $\theta$ in the $\QQ$-span of $\Sigma$ we set
\[
p_X(\theta) = \sum_{D\in\calD} \left(m_D-1+\<\rho(D),\theta\>\right).
\]
For any $G$-orbit $Z\subset X$, we denote by $(\calC_Z, \Delta_Z)$ the {\em colored cone} corresponding to $Z$, i.e.\ $\calC_Z$ is the convex cone generated by $\rho'(D)$ for all $D\in \calD$ such that $D\supset Z$, and $\Delta_Z$ is the set of all $D\in\Delta$ containing $Z$.

\begin{proposition}\label{prop:mfs-geq}
Let $X$ be a spherical $G$-module. Then we have $\mathscr P(X)\geq \dim(X)-\rank(X)$.
\end{proposition}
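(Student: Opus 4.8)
The inequality $\mathscr{P}(X) \geq \dim(X) - \rank(X)$ asks us to exhibit a single $\theta$ in $Q^*_X \cap \mathrm{cone}(\Sigma)$ for which $p_X(\theta)$ is large enough; since $\mathscr{P}(X)$ is a supremum, one good choice suffices. The natural candidate is a $\theta$ that sits at the "center" of the admissible region, where the constraints $\langle \rho'(D), \theta \rangle \geq -m_D$ are as slack as possible. My plan is to take $\theta = 0$ as a warm-up (which gives $p_X(0) = \sum_{D \in \mathcal{D}}(m_D - 1)$, already a clean lower bound), and then to improve it by moving into the interior of $\mathrm{cone}(\Sigma)$ in a direction dictated by the module structure. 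The key point is that for a spherical module, the weight monoid $\mathcal{M}^+$ is a finitely generated submonoid of $\mathcal{M}$ whose behaviour is explicitly controlled, and — crucially — $X$ itself is a toric-like degeneration target: the fixed point $0 \in X$ is a $G$-orbit $Z_0$ whose colored cone $(\mathcal{C}_{Z_0}, \Delta_{Z_0})$ has $\mathcal{C}_{Z_0}$ equal to the whole valuation cone direction and $\Delta_{Z_0} = \Delta$. This should let me identify a distinguished $\theta$ naturally.

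\medskip

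Concretely, the first step is to recall (or derive) that for a spherical module $V$, the anticanonical multiplicities $m_D$ and the divisor data $\rho'(D)$ can be read off from the weight monoid via the formula $-K_V = \sum_D m_D D$ together with the fact that $\CC[V] = \CC[x_1,\dots,x_n]$ has a $G$-eigenbasis of monomials. In particular $\dim V - \rank V = |R^+| - |R^+_{S^p}|$ is already recorded in the excerpt, and the sum $\sum_{D}(m_D - 1)$ has a representation-theoretic meaning: colors $D \in \Delta(\alpha)$ with $\alpha \notin \Sigma \cup \frac12\Sigma$ carry weight $m_D = \langle \alpha^\vee, 2\rho_S - 2\rho_{S^p}\rangle$, and summing these contributes a chunk of $|R^+| - |R^+_{S^p}|$. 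The second step is to choose $\theta \in \mathrm{cone}(\Sigma)$ pushing toward the colored cone $\mathcal{C}_{Z_0}$ of the origin: since the origin is the unique closed orbit and $\mathcal{C}_{Z_0}$ generates $(\Span_\QQ \Sigma)^*$, we can pick $\theta$ in the relative interior of $\mathrm{cone}(\Sigma)$ so that $\langle \rho'(D), \theta\rangle$ is as negative as allowed — i.e.\ pushing each constraint to equality $\langle \rho'(D),\theta\rangle = -m_D$ for the divisors $D$ that cut out $Z_0$. Then $p_X(\theta) = \sum_{D}(m_D - 1) + \sum_D \langle \rho'(D),\theta\rangle$, and for $D \in \Gamma$ (where $m_D = 1$) the term $m_D - 1 + \langle \rho'(D),\theta\rangle = \langle \rho'(D),\theta\rangle$ must be handled — this is where I expect to need that $\Gamma = \emptyset$ for a module, or that the $\rho(D)$ for $D \in \Gamma$ still satisfy $\langle \rho(D),\theta\rangle \geq$ something controllable.

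\medskip

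The third and decisive step is a bookkeeping identity: I want to show that with this choice, $p_X(\theta) = \sum_D m_D \cdot(\text{something}) = |R^+| - |R^+_{S^p}|$ exactly, or at least $\geq$. The cleanest route is probably to compare $\sum_D m_D \langle \rho'(D), \theta\rangle$ with a sum over positive roots: there should be an identity expressing $\sum_{D \in \mathcal{D}} m_D \rho'(D)$ (an element of $\mathcal{N}_\QQ$) in terms of $2\rho_S - 2\rho_{S^p}$ restricted appropriately, coming from the very definition of $-K_X$ as an honest anticanonical class on the smooth variety $V$. Pairing that identity against a carefully normalized $\theta$ should collapse the whole expression. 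The main obstacle, as I see it, is precisely making this last identity precise and checking that the chosen $\theta$ genuinely lies in $Q^*_X$ (not just in $\mathrm{cone}(\Sigma)$) — i.e.\ that none of the color constraints $\rho'(D)/m_D \in Q_X$ is violated. For that I would lean on the structure theory of spherical modules: the weight monoid is "indecomposable-saturated," the spherical roots are a known short list, and the combinatorial data $(\Delta, S^p, \Sigma)$ of a spherical module is explicitly tabulated; this should force $\theta$ into the dual polytope. If a fully conceptual argument for membership proves elusive, the fallback is to verify it along the classification of spherical modules, which is what the later sections do anyway — but I would expect the weight-monoid viewpoint to make the $\geq$ inequality transparent without case analysis.
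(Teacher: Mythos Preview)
Your plan has the right overall shape --- exhibit one $\theta\in Q^*_X\cap\mathrm{cone}(\Sigma)$ with $p_X(\theta)=\dim X-\rank X$ --- and your instinct to compare two expressions for the anticanonical class is exactly what the paper exploits. But two steps in your outline are genuine gaps.

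First, ``push each constraint $\langle\rho'(D),\theta\rangle\geq -m_D$ to equality for the divisors cutting out $Z_0=\{0\}$'' cannot work as stated: every color contains $0$, so this forces $\langle\rho'(D),\theta\rangle=-m_D$ for all $D\in\Delta$ and gives $p_X(\theta)=-|\Delta|=-\rank X$. The paper's $\vartheta$ is more selective. Decompose $X=V_1\oplus\cdots\oplus V_t$; among the colors one singles out the linear $B$-stable hyperplane $D_{i,1}$ in each $V_i$, and calls $N$ the set indexing the remaining $\rank X-t$ colors. The correct $\vartheta$ makes the constraint tight only for colors in $N$ (contributing $-|N|$), while $\langle\rho'(D_{i,1}),\vartheta\rangle=\dim V_i-m_{i,1}$ (contributing $\dim V_i-1$); the total is $\dim X-t-|N|=\dim X-\rank X$. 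This $\vartheta$ comes precisely from an anticanonical comparison, but on the compactification $\overline X=\prod_i\PP(V_i\oplus\CC)$: there $-K_{\overline X}$ is simultaneously $\sum_i(\dim V_i\cdot D_{i,1}+E_i)$ and $\sum_{i,s}m_{i,s}D_{i,s}+\sum_iE_i$, so their difference is principal and $\vartheta$ is the $B$-weight of a rational eigenfunction with that divisor. Your sketch never isolates the linear hyperplanes $D_{i,1}$ from the other colors, and without that dichotomy the computation does not close.

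Second, you have the difficulty backwards: you flag $\theta\in Q^*$ as the obstacle and treat $\theta\in\mathrm{cone}(\Sigma)$ as automatic. In fact $\vartheta\in Q^*$ is immediate from the explicit pairings above, while $\vartheta\in\mathrm{cone}(\Sigma)$ --- equivalently $\langle v,\vartheta\rangle\leq 0$ for every $v\in\mathcal V$ --- is the heart of the proof. The paper establishes it by showing $\mathcal V$ is contained in the cone $\mathcal U$ generated by $\pm\rho'(E_j)$ (boundary divisors of $\overline X$) together with $\rho'(D)$ for $D$ indexed by $N$; since $\vartheta$ vanishes on each $E_j$ and is $-m_D<0$ on each such $D$, the inclusion gives the result. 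Proving $\mathcal V\subseteq\mathcal U$ requires analyzing the colored fan of $\overline X$: for each closed $G$-orbit $Z$ one must check that the hyperplane divisors $D_{j,1}$ with $E_j\supset Z$ do \emph{not} contain $Z$, and then compare two bases of $\mathcal N_\QQ$. Nothing in your outline supplies this, and a purely weight-monoid or ``$Z_0$ is the unique closed orbit of $X$'' argument does not obviously produce it. Your fallback to the classification would of course work, but the paper's point is that the compactification argument avoids it.
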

\begin{proof}
We intend to find a point $\vartheta$ where the value $p_X(\vartheta)=\dim(X)-\rank(X)$ is attained.

Decompose $X=V_1\oplus\ldots\oplus V_t$ into a sum of irreducible $G$-modules, and consider the $G$-equivariant completion
\[
\overline X = \PP(V_1\oplus \CC)\times \ldots\times \PP(V_t\oplus \CC) = (V_1\cup\PP(V_1))\times \ldots\times (V_t\cup\PP(V_t))
\]
of $X$, where $G$ acts trivially on the summands $\CC$. The difference $\overline X\smallsetminus X$ is the union of $G$-stable prime divisors $E_1,\ldots,E_t$, where $E_i$ is the product of $\PP(V_i)$ and of the factors $\PP(V_j\oplus\CC)$ for all $j\neq i$.

Let us describe the $B$-stable prime divisors of $\overline X$ that intersect $X$: we have the closures $D_{i,1},\ldots,D_{i,n(i)}$ of the inverse images in $X$ of the $B$-stable prime divisors of $V_i$ under the projection, and additional prime divisors $D_{t+1,1},\ldots,D_{t+1,n(t+1)}$ not obtained in this way from any projection. The weight monoid of $X$ is freely generated by the $B$-eigenvalues $\lambda_{i,s}$ of global equations $f_{i,s}$ of $D_{i,s}$ for all $i\in\{1,\ldots,t+1\}$ and $s\in\{1,\ldots,n(i)\}$. The valuations $\rho'(D_{i,s})$ for all $i,s$ as above form the dual basis of these generators. We may assume that $D_{i,1}$ corresponds to the $B$-stable linear hyperplane of $V_i$ for all $i\in\{1,\ldots,t\}$, i.e.\ that $f_{i,1}$ is a functional on $V_i$ and $\lambda_{i,1}$ is the highest weight of the dual of $V_i$.

For all $j,i,s$ the value $e_{j,i,s}=\<\rho'(E_j),\lambda_{i,s}\>$ is equal to the opposite of the partial degree $\deg_j(f_{i,s})$ of $f_{i,s}$ with respect to $V_j$, so this value is $\leq 0$. We record the easy fact that $e_{j,i,1}=-\deg_j(f_{i,1})=-\delta_{i,j}$ for all $i,j\in\{1,\ldots,t\}$.

Among the closed $G$-orbits of $\overline X$ we find $Z_0=\{0\}\subset X$. The convex cone $\calC_{Z_0}$ is generated by $\rho'(D_{i,s})$ for all $i,s$ as above. Any other closed $G$-orbit $Z\neq Z_0$ of $\overline X$ is contained in the union $\partial X = E_1\cup \ldots\cup E_t$. In this case there exists some $j\in\{1,\ldots,t\}$ such that $Z\subset E_j$, and we claim that for those indices $j$ the prime divisor $D_{j,1}$ doesn't contain $Z$. The claim follows by observing that $Z$ projects surjectively onto the unique closed $G$-orbit $W_{j}$ of $\PP(V_j)$, and $D_{j,1}\cap X$ projects onto the $B$-stable linear hyperplane of $V_j$. This hyperplane cannot contain the cone over $W_j$ because this cone is $G$-stable hence spans linearly the whole $V_j$, whence the claim.

For any closed $G$-orbit $Z\subset \overline X$, call $J(Z)$ the set of indices $j$ such that $E_j$ contains $Z$. For later convenience let us define the following sets:
\[
A = \left\{ (i,s)\in(\ZZ_{\geq0})^2 \;|\; i\in\{1,\ldots,t+1\}, s\in\{1,\ldots,n(i)\}\right\},
\]
\[
N = A\smallsetminus (\{1,\ldots,t\}\times\{1\}),
\]
\[
K(Z) = A\smallsetminus (J(Z)\times\{1\}).
\]
As a consequence of our analysis, the convex cone $\calC_Z$ is generated by $\rho(E_j)$ for all $j\in J(Z)$, and some other generators of the form $\rho'(D_{i,s})$ for $(i,s)\in A$. Among these, the divisors $D_{j,1}$ for $j\in J(Z)$ do not contain $Z$, but the cone $\calC_Z$ has maximal dimension equal to $|A|$, hence the other generators of $\calC_Z$ must be $\rho'(D_{i,s})$ for all couples $(i,s)\in K(Z)$.

Recall that $\overline X$ is projective, therefore the union of all its colored cones contains $\calV$. The crucial claim of the proof is now that $\calV$ is also contained in the convex cone $\calU$ generated by $\pm\rho'(E_j)$ for all $j\in\{1,\ldots,t\}$, and by $\rho'(D_{i,s})$ for all $(i,s)\in N$.

Let us show the claim, so take $v\in\calV$. For any subset $J\subset \{1,\ldots,t\}$, set $J^c=\{1,\ldots,t\}\smallsetminus J$. The vectors $\rho'(E_j)$ for $j\in J$ together with $\rho'(D_{j,1})$ for $j\in J^c$ and with $\rho'(D_{i,s})$ for $(i,s)\in N$ form a basis of the vector space $\calN_\QQ$, because of the equality $e_{j,i,1}=-\delta_{i,j}$ for all $i,j\in\{1,\ldots,t\}$.

Write
\[
v = \sum_{(i,s)\in N}c_{i,s}\rho'(D_{i,s}) + \sum_{j=1}^t c_j\rho'(E_j)
\]
with coefficients $c_{i,s}, c_j\in\QQ$. Since $\rho'(E_j)\in\calV$, the sum
\[
w = \sum_{(i,s)\in N}c_{i,s}\rho'(D_{i,s}) + \sum_{j=1}^t |c_j|\rho'(E_j)
\]
is in $\calV$. We intend to prove that $w\in\calU$: this will imply that $v\in\calU$ because $v-w\in\calU$ and $\calU$ is a convex cone.

Let $Z\subset \overline X$ be a closed $G$-orbit such that $\calC_Z$ contains $w$, and write
\[
w = \sum_{(i,s)\in N}b_{i,s}\rho'(D_{i,s}) + \sum_{j\in J(Z)^c} b_j\rho'(D_{j,1}) + \sum_{j\in J(Z)}b_j\rho'(E_j)
\]
with uniquely determined coefficients $b_{i,s},b_j\in\QQ_{\geq0}$. The equality
\[
\rho'(E_j)=\sum_{(i,s)\in N} e_{j,i,s}\rho'(D_{i,s}) - \rho'(D_{j,1})
\]
used for the indices $j$ in $J(Z)^c$ yields
\[
w = \sum_{(i,s)\in N}\left(c_{i,s}+\sum_{j\in J(Z)^c}|c_j|e_{j,i,s}\right)\rho'(D_{i,s}) - \sum_{j\in J(Z)^c} |c_j|\rho'(D_{j,1})+\sum_{j\in J(Z)} |c_j|\rho'(E_j).
\]
We conclude that $|c_j|=-b_j\leq 0$ for all $j\in J(Z)^c$, that $|c_j|=b_j\geq 0$ for all $j\in J(Z)$, and that $c_{i,s}+\sum_{j\in J(Z)^c}|c_j|e_{j,i,s}=b_{i,s}\geq 0$ for all $(i,s)\in N$. But then $c_j=0$ for all $j\in J(Z)^c$, which yields $c_{i,s}\geq 0$ for all $(i,s)\in N$ and $w\in\calU$, proving the claim.

We exploit the claim to finish the proof of the proposition. An anticanonical divisor on $\overline X$ can be expressed in two ways:
\[
\sum_{i=1}^t(\dim(V_i) D_{i,1} + E_i), \qquad\text{and}\qquad \sum_{(i,s)\in A} m_{i,s}D_{i,s} + E_1+\ldots+E_t
\]
where $m_{i,s}=m_{D_{i,s}}$, therefore the divisor
\[
\delta=\sum_{i=1}^t(\dim(V_i) D_{i,1}) - \sum_{(i,s)\in A} m_{i,s}D_{i,s}
\]
of $\overline X$ is principal. Let $f\in\CC(\overline X)$ a $B$-semiinvariant function with divisor $\delta$, and let $\vartheta$ be its $B$-eigenvalue. Let us show that $\vartheta\in \cone \Sigma$.

Recall that $\vartheta\in \cone \Sigma$ if and only if all elements of $\calV$ take non-positive values on $\vartheta$. The above expression of $\delta$ shows that the valuation $\rho'(E_j)$ vanishes on $\vartheta$ for all $j\in\{1,\ldots,n\}$, and the valuations $\rho'(D_{i,s})$ for $(i,s)\in N$ take negative values on $\vartheta$ because $m_{i,s}>0$ for all $i,s$. This shows that all elements of $\calU$ take non-positive values on $\vartheta$, proving that $\vartheta\in  \cone\Sigma$.

We also check $\vartheta\in Q^*$. For this, we compute for all $D\in \Delta$ as follows: if $D=D_{i,s}$ for some $(i,s)\in N$ we have
\[
\< \vartheta, \rho'(D)\> = -m_{i,s},
\]
if $D=D_{i,1}$ for some $i\in\{1,\ldots,t\}$ we have
\[
\< \vartheta, \rho'(D)\> = \dim(V_i)\geq -m_{i,1}
\]
and finally if $D=E_i$ for some $i\in\{1,\ldots,t\}$ we have
\[
\< \vartheta, \rho'(D)\> = 0 \geq -1=-m_{E_{i}},
\]
hence $\vartheta\in Q^*\cap \cone\Sigma$. Finally we compute the desired value $p_X(\vartheta)$, recalling that $\calD$ is the set of all $B$-stable prime divisors of $X$:
\[
\sum_{D\in\calD} \left(m_D-1+\<\rho'(D),\vartheta\>\right) =
A_1 + A_2
\]
where
\[
A_1 = \sum_{(i,s)\in N} \left(m_{i,s}-1-m_{i,s}\right) = - |N|
\]
and
\[
A_2 = \sum_{i=1}^t \left( m_{i,1}-1+\dim(V_i)-m_{i,1}\right) = \dim(V)-t
\]
so
\[
A_1+A_2 = \dim(X) - \rank(X)
\]
because $|N|+t = |A|=\rank(X)$.
\end{proof}

We prepare now for the proof of the equality $\scrP(X)=\dim(X)-\rank(X)$ by analyzing the weight monoid $\calM^+$ of the spherical module $X$.

Decompose $X=V_1\oplus\ldots\oplus V_t$ into a sum of irreducible modules. For simplicity we assume that $G$ is contained in $\GL(X)$ and that it contains the group $\CC^*_i$ of homotheties of $V_i$ for all $i$. It is not difficult to check that this has no influence on our considerations on $\scrP$. Set $C=\CC^*_1\times\ldots\times\CC^*_t$.

We simplify the notations of the proof of Proposition~\ref{prop:mfs-geq}, and we denote simply by $\lambda_1,\ldots,\lambda_t,\lambda_{t+1},\ldots,\lambda_r$ the free generators of the monoid $\calM^+$, where $\lambda_i$ is the highest weight of the dual of $V_i$ for all $i\in\{1,\ldots,t\}$. Correspondingly we denote by $D_1,\ldots,D_r$ the $B$-stable prime divisors of $X$.

\begin{lemma}\label{lemma:rank}
We have $r \geq t+|\Sigma|$.
\end{lemma}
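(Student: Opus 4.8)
The plan is to exhibit enough linearly independent spherical roots inside the span of $\calM$ and compare dimensions. Recall that the generators $\lambda_1,\dots,\lambda_r$ of $\calM^+$ freely generate $\calM^+$, so $\calM$ has rank $r$ and $\rho'(D_1),\dots,\rho'(D_r)$ is the dual basis of $\calN$. The inclusion $C\subset G$ means that the restriction of weights to $C$ is well understood: each $\lambda_i$, being the highest weight of $V_i^*$, restricts to $C$ as $-\varepsilon_i$ where $\varepsilon_1,\dots,\varepsilon_t$ is the standard basis of the character lattice of $C=\CC^*_1\times\cdots\times\CC^*_t$. In particular the restriction map $\calM\to\mathrm{X}^*(C)\cong\ZZ^t$ is surjective, with kernel a sublattice $\calM_0$ of rank $r-t$.

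First I would identify $\calM_0$ with the weight lattice of the action restricted to the semisimple part (more precisely, $\calM_0$ is spanned by the differences $\lambda_{i}$ with $i>t$ suitably corrected by the $\lambda_j$ with $j\le t$, since those extra generators carry the ``non-homothety'' part of the weight). The key point is then that $\Sigma\subset\calM_0$: indeed every spherical root is a nonnegative combination of simple roots, hence lies in the root lattice of $G$, and it is orthogonal to the center of $\mathrm{Lie}(G)$, so in particular it restricts trivially to $C$. Therefore $\Span_\QQ\Sigma\subseteq(\calM_0)_\QQ$, which has dimension $r-t$; since the spherically closed spherical roots in $\Sigma$ are linearly independent (stated in Section~\ref{s:GHconj}), we get $|\Sigma|\le r-t$, which is exactly the claimed inequality $r\ge t+|\Sigma|$.

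The main obstacle is justifying that the restriction map $\calM\to\mathrm{X}^*(C)$ is genuinely surjective and that $\Sigma$ lands in its kernel. Surjectivity follows because $C$ acts with independent homotheties on the $V_i$ and the coordinate functions on each $V_i^*$ (or rather the $B$-semiinvariant $f_{i,1}$, a linear form on $V_i$) have $C$-weight exactly $-\varepsilon_i$; so the images of $\lambda_1,\dots,\lambda_t$ already form a basis of $\mathrm{X}^*(C)$. For the second point, one uses that every element of $\Sigma$ lies in the lattice generated by $S$ (the spherical roots are positive roots or sums of two positive roots of $G$), and the characters of $G$ coming from $C\subset G$ restrict to zero on the derived subgroup, hence pair to zero with every root; equivalently, the spherical roots are invariant under the (trivial) action of the connected center, so they vanish on the $C$-direction. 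Combining, $\Sigma\subset\calM_0$ and linear independence of $\Sigma$ gives $|\Sigma|\le\rank\calM_0=r-t$.

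One subtlety to handle carefully is the normalization: $\Sigma$ here denotes the spherically closed spherical roots of $G/\overline H$, but they still lie in the root lattice of $G$ (positive roots or sums of two positive roots up to the usual $\tfrac12$-ambiguity, which does not affect membership in $\calM_0$ after rescaling), and they remain linearly independent, so the argument is unaffected. Thus the whole proof reduces to the two observations above plus the dimension count $\dim(\calM_0)_\QQ = r - t$, giving $r\ge t+|\Sigma|$ as desired.
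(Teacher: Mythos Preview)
Your argument is correct and takes a genuinely different route from the paper's. The paper works on the $\calN$-side: it invokes the known decomposition $r=|\Sigma|+\ell$, where $\ell$ is the dimension of the linear part of the valuation cone $\calV$, identifies $\ell$ with $\dim N_G(H)/H$, and then observes that the central torus $C$ injects into $N_G(H)/H$ (since $C$ normalizes $H$ and $C\cap H=\{\Id_X\}$), giving $\ell\ge t$. You work on the dual $\calM$-side: the restriction $\calM\to X^*(C)$ is surjective (because $\lambda_1,\dots,\lambda_t$ map to a basis of $X^*(C)$), its kernel has rank $r-t$, and $\Sigma$ lies in that kernel because spherical roots belong to the root lattice and hence vanish on the center $\supseteq C$. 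Your approach is more elementary in that it avoids both the formula $r=|\Sigma|+\ell$ and the identification of the linear part of $\calV$ with equivariant automorphisms, relying only on the basic fact that roots are trivial on the center; the paper's approach, on the other hand, gives a geometric interpretation of the defect $r-t-|\Sigma|$ as the part of $N_G(H)/H$ not accounted for by $C$. Your caveat about a possible $\tfrac12$-ambiguity is unnecessary here: the paper already records that spherically closed spherical roots are genuine elements of the root lattice.
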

\begin{proof}
The number $r$ is the rank of $X$ as a spherical variety, and it is equal to $|\Sigma|+\ell$ where $\ell$ is the dimension of the linear part of the valuation cone $\calV$. Let $H\subseteq G$ be a generic stabilizer for the action of $G$ on $X$, then $\ell$ is equal to the dimension of the group $N_G(H)/H$. The torus $C$ normalizes $H$ and we have $H\cap C=\{ \Id_X\}$, hence $C$ projects to a $t$-dimensional sugroup of $N(H)/H$, whence $\ell\geq t$.
\end{proof}

\begin{definition}
For all $d=(d_1,\ldots,d_t)\in (\ZZ_{\geq0})^t$ set $\lambda(d) = d_1\lambda_1+\ldots+d_t\lambda_t$ and
\[
\calM^+_d = \calM^+ \cap \left(\lambda(d) - \cone\Sigma\right).
\]
\end{definition}
For any $\gamma\in\calM^+$ choose a $B$-eigenvector $f_\gamma\in\CC[X]$ of $B$-eigenvalue $\gamma$. Since $C\subseteq B$, the function $f_\gamma$ is multihomogeneous with respect to the sets of variables of $V_1,\ldots,V_t$.

\begin{lemma}\label{lemma:wmdecomposition}
With the above notations, the weight monoid $\calM^+$ is the disjoint union of $\calM^+_d$ for all $d\in(\ZZ_{\geq0})^t$. Moreover $\calM^+_d$ is a finite set for all $d$, and we also have
\[
\calM^+_d = \calM^+ \cap \left(\lambda(d) + \Span_\QQ\Sigma\right).
\]
\end{lemma}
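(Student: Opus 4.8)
The plan is to identify each $\calM^+_d$ with the fibre over $d$ of a ``multidegree'' homomorphism. The central torus $C\subseteq B$ acts on the polynomial algebra $\CC[X]$ with weight spaces its multihomogeneous components ($C$ is central in $G$, because $G$, being connected, preserves each $V_k$ and hence commutes with the homotheties of $V_k$). Thus every $B$-eigenfunction $f_\chi\in\CC(X)$, $\chi\in\calM$, is a ratio of multihomogeneous polynomials and has a multidegree $\phi(\chi)\in\ZZ^t$ depending only on $\chi$; since multidegrees add under multiplication, $\phi\colon\calM\to\ZZ^t$ is a homomorphism, equal up to sign to the restriction of characters from $B$ to $C$. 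Hence $\phi$ kills $\Span_\QQ\Sigma$ (as $\Sigma$ lies in the root lattice and $C$ is central), $\phi(\lambda_i)=\deg(f_i)=e_i$ for $i\leq t$, and $\phi(\lambda_i)\in(\ZZ_{\geq0})^t\smallsetminus\{0\}$ for every $i$ because a divisorial equation $f_i$ is non-constant. In particular $\phi(\lambda(d))=d$, and for $\gamma\in\calM^+_d=\calM^+\cap(\lambda(d)-\cone\Sigma)$ we get $\phi(\gamma)=\phi(\lambda(d))=d$; so the sets $\calM^+_d$ are pairwise disjoint and $\calM^+_d\subseteq\calM^+\cap\phi^{-1}(d)$.

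The main step is the reverse inclusion: every $\gamma\in\calM^+$ lies in $\calM^+_{\phi(\gamma)}$, i.e.\ $\lambda(\phi(\gamma))-\gamma\in\cone\Sigma$. Writing $\gamma=\sum_i a_i\lambda_i$ with $a_i\geq0$ and using $\phi(\lambda_i)=e_i$ for $i\leq t$, one computes
\[
\lambda(\phi(\gamma))-\gamma=\sum_{i>t}a_i\,\beta_i,\qquad\text{where }\beta_i:=\lambda(\phi(\lambda_i))-\lambda_i,
\]
so it is enough to show $\beta_i\in\cone\Sigma$ for each $i>t$. Since $\Sigma$ is linearly independent we have $\cone\Sigma=\{\chi\in\calM_\QQ\ :\ \langle\nu,\chi\rangle\leq0\ \forall\,\nu\in\calV\}$, so I must prove $\langle\nu,\beta_i\rangle\leq0$ for all $\nu\in\calV$. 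For this I would use the completion $\overline X=\prod_k\PP(V_k\oplus\CC)$ with $G$-stable boundary divisors $E_1,\dots,E_t$ from the proof of Proposition~\ref{prop:mfs-geq}, together with the inclusion $\calV\subseteq\calU$ established there, $\calU$ being the cone generated by $\pm\rho'(E_j)$ and by $\rho'(D_l)$ for $l>t$; it then suffices to test $\langle\,\cdot\,,\beta_i\rangle$ on these generators. On $\rho'(D_l)$ with $l>t$, duality of the bases $\{\rho'(D_m)\}$ and $\{\lambda_m\}$ gives $\langle\rho'(D_l),\lambda(\phi(\lambda_i))\rangle=0$ (as $\phi(\lambda_i)$ is supported on $\{1,\dots,t\}$) and $\langle\rho'(D_l),\lambda_i\rangle=\delta_{li}$, hence $\langle\rho'(D_l),\beta_i\rangle=-\delta_{li}\leq0$. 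On $\pm\rho'(E_j)$, using $\langle\rho'(E_j),\lambda_l\rangle=-\deg_j(f_l)$ one sees that the divisors on $\overline X$ of $f_i$ and of $f_{\lambda(\phi(\lambda_i))}=\prod_{l\leq t}f_l^{\deg_l(f_i)}$ carry the same coefficient $-\deg_j(f_i)$ along each $E_j$; so $f_i/f_{\lambda(\phi(\lambda_i))}$, whose $B$-eigenvalue is $-\beta_i$, has no component along any $E_j$, giving $\langle\rho'(E_j),\beta_i\rangle=0$. Therefore $\langle\nu,\beta_i\rangle\leq0$ on all of $\calU\supseteq\calV$, so $\beta_i\in\cone\Sigma$.

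With both inclusions we obtain $\calM^+=\bigsqcup_d\calM^+_d$ and $\calM^+_d=\calM^+\cap\phi^{-1}(d)$, and the remaining assertions follow. Finiteness: $\calM^+\cap\phi^{-1}(d)$ is the set of $B$-eigenvalues occurring in the finite-dimensional multidegree-$d$ component of $\CC[X]$ (equivalently, $\phi$ is proper on the pointed cone $\sum_i\QQ_{\geq0}\lambda_i$ because each $\phi(\lambda_i)$ is nonzero and nonnegative), hence a finite set. Last equality: $\calM^+_d\subseteq\calM^+\cap(\lambda(d)+\Span_\QQ\Sigma)$ is immediate from $\cone\Sigma\subseteq\Span_\QQ\Sigma$; conversely, if $\gamma\in\calM^+$ and $\gamma\in\lambda(d)+\Span_\QQ\Sigma$ then $\phi(\gamma)=\phi(\lambda(d))=d$ since $\phi$ vanishes on $\Span_\QQ\Sigma$, so $\gamma\in\calM^+\cap\phi^{-1}(d)=\calM^+_d$.

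The step I expect to be the main obstacle is $\beta_i\in\cone\Sigma$: one has good control of $f_i$ only on the module $X$ itself, and must upgrade this to non-negativity of the eigenfunction $f_i/f_{\lambda(\phi(\lambda_i))}$ along every $G$-invariant valuation, which is where the $G$-equivariant completion $\overline X$ and the cancellation of its boundary contributions enter. The rest is routine bookkeeping with the homomorphism $\phi$.
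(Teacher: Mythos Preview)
Your argument is correct, but it follows a genuinely different route from the paper's. The paper proves the covering $\calM^+=\bigcup_d\calM^+_d$ by the standard representation-theoretic fact for spherical varieties: since $f_\gamma$ lies in the image of $(V_1^*)^{\otimes d_1}\otimes\cdots\otimes(V_t^*)^{\otimes d_t}\to\CC[X]$, an induction on $|d|$ using the tensor-product decomposition rule in $\CC[X]$ (differences of highest weights lie in $\ZZ_{\geq0}\Sigma$) gives $\gamma\in\lambda(d)-\cone\Sigma$ directly. Disjointness is then deduced \emph{a posteriori} from the direct-sum decomposition $\calM_\QQ=\Span_\QQ\{\lambda_1,\dots,\lambda_t\}\oplus\Span_\QQ\Sigma$, which in turn requires the rank estimate of Lemma~\ref{lemma:rank}.

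You instead set up the multidegree homomorphism $\phi$ from the outset, getting disjointness and the alternative description of $\calM^+_d$ for free, and you isolate the only nontrivial point as $\beta_i=\lambda(\phi(\lambda_i))-\lambda_i\in\cone\Sigma$ for $i>t$. You then settle this by testing against the generators of $\calU$ and invoking the inclusion $\calV\subseteq\calU$ proved inside Proposition~\ref{prop:mfs-geq}. This avoids both the tensor-product decomposition fact and Lemma~\ref{lemma:rank}; conversely, the paper's approach is independent of the geometry of $\overline X$ and yields the direct-sum identity~(\ref{eq:directsum}) as a byproduct (used immediately afterwards for the corollary $r=t+|\Sigma|$). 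Your argument also recovers~(\ref{eq:directsum}), since $\beta_i\in\Span_\QQ\Sigma$ forces $\lambda_i\in\Span_\QQ\{\lambda_1,\dots,\lambda_t\}+\Span_\QQ\Sigma$ and $\phi$ splits the sum, so nothing is lost downstream.
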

\begin{proof}
First we prove that $\calM^+$ is contained in the union of $\calM^+_d$ for all $d$. For $\gamma\in\calM^+$, denote by $d=(d_1,\ldots,d_t)\in(\ZZ_{\geq0})^t$ the multidegree of $f_\gamma$, then $f_\gamma$ is in the image of the natural map
\[
\left(V_1^*\right)^{\otimes d_1}\otimes\ldots\otimes\left(V_t^*\right)^{\otimes d_t}\to \CC[X].
\]
induced by multiplication. By induction on the total degree $|d|$ of $f_\gamma$ it follows that
\[
\gamma\in \lambda(d) - \cone\Sigma,
\]
i.e.\ that $\gamma\in\calM^+_d$.

In particular $\calM_\QQ$ is generated as a rational vector space by $\lambda_1,\ldots,\lambda_t$ and $\Sigma$, and this together with Lemma~\ref{lemma:rank} yields
\begin{equation}\label{eq:directsum}
\calM_\QQ=\Span_{\QQ}\{\lambda_1,\ldots,\lambda_t\}\oplus \Span_{\QQ}\Sigma.
\end{equation}
We deduce that all sets $\calM^+_d$ are pairwise disjoint, and that the alternative formula for $\calM^+_d$ given in the statement is true. Finally, from this fact it follows also that $d$ is the multidegree of $f_\gamma$ for all $\gamma\in\calM^+_d$, hence $\calM^+_d$ only contains finitely many elements.
\end{proof}

\begin{corollary}
We have $r =t+|\Sigma|$.
\end{corollary}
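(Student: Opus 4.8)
The statement is an immediate consequence of material already assembled, so the plan is simply to make the dimension count explicit. First I would recall that $r=\rank X=\dim_\QQ\calM_\QQ$, and that Lemma~\ref{lemma:rank} supplies the lower bound $r\geq t+|\Sigma|$. For the matching upper bound I would invoke the decomposition~\eqref{eq:directsum} established inside the proof of Lemma~\ref{lemma:wmdecomposition}, namely $\calM_\QQ=\Span_\QQ\{\lambda_1,\ldots,\lambda_t\}\oplus\Span_\QQ\Sigma$: since the spherically closed spherical roots are linearly independent one has $\dim_\QQ\Span_\QQ\Sigma=|\Sigma|$, while $\Span_\QQ\{\lambda_1,\ldots,\lambda_t\}$ has dimension at most $t$, so $r=\dim_\QQ\calM_\QQ\leq t+|\Sigma|$. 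Comparing the two inequalities yields $r=t+|\Sigma|$.

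Because the argument is just a comparison of two bounds already at hand, there is no genuine obstacle. The only point worth stressing is the byproduct: the equality forces $\lambda_1,\ldots,\lambda_t$ to be linearly independent, so that the sum in~\eqref{eq:directsum} is a genuine direct sum of a $t$-dimensional space and an $|\Sigma|$-dimensional one — exactly the form in which~\eqref{eq:directsum} was stated and used above, and the reason the finiteness of each $\calM^+_d$ in Lemma~\ref{lemma:wmdecomposition} goes through.
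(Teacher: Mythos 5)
Your proof is correct and takes essentially the same route as the paper's: the paper's one-line proof simply points to the direct-sum decomposition~\eqref{eq:directsum}, which itself was obtained by combining the spanning statement with the lower bound from Lemma~\ref{lemma:rank}, and your write-up just unpacks that comparison of bounds explicitly (including the observation that equality forces $\lambda_1,\ldots,\lambda_t$ to be linearly independent).
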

\begin{proof}
This stems from (\ref{eq:directsum}) in the proof of Lemma~\ref{lemma:wmdecomposition}.
\end{proof}

We also reobtain the following result which is a special case of \cite[Lemma~7.3]{GH}.

\begin{corollary}[{\cite[see Lemma~7.3]{GH}}]\label{cor:skeletonV}
The skeleton of any spherical module $X$ is complete.
\end{corollary}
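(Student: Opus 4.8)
The plan is to show that the cone generated by the $\rho(D)$'s for $D\in\calD$, restricted to the skeleton, covers all of $(\Span_\QQ\Sigma)^*$. Recall from the construction of the skeleton $\calR_X$ that $\Gamma$ is the set of $G$-stable prime divisors of $X$ and that $\rho$ on $\Gamma\times\Sigma$ is the restriction of $\rho'$. For a spherical module $X=V_1\oplus\ldots\oplus V_t$ there are no $G$-stable prime divisors in $X$ itself, so $\Gamma=\emptyset$ and completeness amounts to saying that the images of the $\rho'(D)$, for $D\in\Delta$, under the projection $\calN_\QQ\to(\Span_\QQ\Sigma)^*$ dual to the inclusion $\Span_\QQ\Sigma\hookrightarrow\calM_\QQ$, span $(\Span_\QQ\Sigma)^*$ as a convex cone.

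The key input is the direct sum decomposition~(\ref{eq:directsum}), $\calM_\QQ=\Span_\QQ\{\lambda_1,\ldots,\lambda_t\}\oplus\Span_\QQ\Sigma$, proved in Lemma~\ref{lemma:wmdecomposition}. Dually this gives $\calN_\QQ = L \oplus (\Span_\QQ\Sigma)^*$, where $L$ is the annihilator of $\Span_\QQ\Sigma$; the projection onto the second summand is exactly the map appearing in the definition of completeness. Now the valuations $\rho'(D_1),\ldots,\rho'(D_r)$ form the dual basis of the free generators $\lambda_1,\ldots,\lambda_r$ of $\calM^+$. By~(\ref{eq:directsum}) (and its proof), the $\lambda_i$ for $i>t$ lie in $\Span_\QQ\Sigma$, so under the projection $\calN_\QQ\to(\Span_\QQ\Sigma)^*$ the first $t$ elements $\rho'(D_1),\ldots,\rho'(D_t)$ map to zero while $\rho'(D_{t+1}),\ldots,\rho'(D_r)$ map to a basis of $(\Span_\QQ\Sigma)^*$ dual to the basis $\lambda_{t+1},\ldots,\lambda_r$ of $\Span_\QQ\Sigma$. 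In particular the projected $\rho(D)$'s already span $(\Span_\QQ\Sigma)^*$ as a rational vector space.

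To upgrade this to the cone condition, I would invoke the description of $\calV$ from the proof of Proposition~\ref{prop:mfs-geq}, or more directly the fact that for a spherical module the valuation cone $\calV$ has full linear part of dimension $\ell=t$ inside $\calN_\QQ$ and the $\rho'(D_{t+1}),\ldots,\rho'(D_r)$ together with $\pm\rho'(E_1),\ldots,\pm\rho'(E_t)$ (the latter mapping to $0$) generate $\calN_\QQ$ as a cone; alternatively one notes that $\Sigma$ being a basis of $\Span_\QQ\Sigma$ forces the dual cone $\mathrm{cone}(\Sigma)^*$ to be simplicial, and the simplicial cone generated by the projected $\rho'(D_i)$, $i>t$, is precisely this full-dimensional cone since these projections form the basis dual to $\lambda_{t+1},\ldots,\lambda_r$ — but one must check the signs, i.e.\ that this basis of $(\Span_\QQ\Sigma)^*$, read as functionals on $\Span_\QQ\Sigma$, spans the same cone as asked. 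The cleanest route is simply: a simplicial cone of maximal dimension in a vector space $W$ together with its negative covers $W$; here the projected $\rho'(D_i)$, $i>t$, span a maximal simplicial cone, and one shows $-\rho'(D_i)$ lies in the cone generated by the remaining generators by using that $\calV$ (which is contained in the union of the colored cones of the projective completion $\overline X$, and whose linear part projects onto $0$) forces the relevant positivity. I expect the only real subtlety to be this last sign/covering check; everything else is a direct consequence of~(\ref{eq:directsum}) and the duality between the $\lambda_i$ and the $\rho'(D_i)$.
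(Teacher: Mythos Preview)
Your argument has a genuine gap at its central step: the claim that the generators $\lambda_i$ for $i>t$ lie in $\Span_\QQ\Sigma$ is false. For each $i>t$ the function $f_{\lambda_i}$ has total degree $|d(\lambda_i)|\geq 2$, so by Lemma~\ref{lemma:wmdecomposition} we have $\lambda_i\in\calM^+_{d(\lambda_i)}$ with $d(\lambda_i)\neq 0$; writing $\lambda_i=\lambda(d(\lambda_i))-\sigma_i$ with $\sigma_i\in\cone\Sigma$, the component $\lambda(d(\lambda_i))$ is a nonzero element of $\Span_\QQ\{\lambda_1,\ldots,\lambda_t\}$, and by~(\ref{eq:directsum}) this component is nonzero in the quotient $\calM_\QQ/\Span_\QQ\Sigma$. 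Hence $\lambda_i\notin\Span_\QQ\Sigma$. It follows that your dual statement, that $\rho'(D_1),\ldots,\rho'(D_t)$ restrict to zero on $\Span_\QQ\Sigma$, is also false, and the projected $\rho'(D_i)$ for $i>t$ are not the dual basis to anything. So neither the linear-span conclusion nor the subsequent ``simplicial cone'' upgrade is justified. (A smaller issue: $\Gamma$ need not be empty for a spherical module --- think of a one-dimensional summand $V_i$ --- so you should work with $D\in\calD$ rather than $D\in\Delta$.)

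The paper's argument is both different and shorter: it uses directly the finiteness of $\calM^+_d$ from Lemma~\ref{lemma:wmdecomposition}. If the cone generated by the restrictions $\rho(D)|_{\Span_\QQ\Sigma}$ failed to be all of $(\Span_\QQ\Sigma)^*$, there would exist a nonzero $\sigma\in\Span_\QQ\Sigma$ with $\langle\rho'(D),\sigma\rangle\geq 0$ for every $D\in\calD$; then the polytope $\{\mu\in\lambda(d)+\Span_\QQ\Sigma:\langle\rho'(D),\mu\rangle\geq 0\ \forall D\}$, which contains $\calM^+_d$, would be unbounded in the direction $\sigma$, contradicting finiteness. This avoids any need to identify $\Span_\QQ\Sigma$ with a coordinate subspace of $\calM_\QQ$.
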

\begin{proof}
We recall that the statement means that the restrictions of the elements $\rho'(D)$ for all $D\in \calD$, generate the vector space $(\Span_\QQ\Sigma)^*$. This stems from the fact that $\calM^+_d$ is a finite set for all $d$: then for any $d$ the equations $\<\rho'(D),-\> \geq0$ (which are those defining $\calM^+$) define a bounded polytope in the affine space $\lambda(d) + \Span_\QQ\Sigma$.
\end{proof}

\begin{definition}
For any $\gamma\in\calM^+$, we define $d(\gamma)$ to be the unique element of $(\ZZ_{\geq0})^t$ such that $\gamma\in \calM^+_{d(\gamma)}$. Also, for all $d=(d_1,\ldots,d_t)\in(\ZZ_{\geq0})^t$ set $|d| = d_1+\ldots+d_t$.
\end{definition}

We extract from the proof of Lemma~\ref{lemma:wmdecomposition} that $d(\gamma)$ is just the multidegree of $f_\gamma$ with respect to the sets of variables of $V_1,\ldots,V_t$, so $|d(\gamma)|$ is the (total) degree of $f_\gamma$.

For all $d=(d_1,\ldots,d_t)\in (\ZZ_{\geq0})^t$ set
\[
\calM^+_{d,\QQ} = \left(\Span_{\QQ_{\geq 0}}\calM^+\right) \cap \left(\lambda(d) - \cone\Sigma\right).
\]
For all $\xi\in\Xi_\QQ$ define
\[
s(\xi) = \sum_{i=1}^r \<\rho(D_i),\xi\>
\]
and for all $d\in(\ZZ_{\geq0})^t$ consider the function
\[
\begin{array}{lccc}
s_d \colon & \calM^+_{d,\QQ} & \to & \QQ \\
 & \gamma & \mapsto & s(\gamma).
\end{array}
\]

\begin{lemma}\label{lemma:estimate}
Let $\gamma\in \calM^+$. If $\gamma\in \Span_{\ZZ_{\geq0}}\{\lambda_1,\ldots,\lambda_t\}$ then $|d(\gamma)|=s(\gamma)$, otherwise $|d(\gamma)|>s(\gamma)$.
\end{lemma}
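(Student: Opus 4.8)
The plan is to expand $\gamma$ on the free generators of $\calM^+$ and compare the two quantities term by term. First I would write $\gamma=\sum_{i=1}^r a_i\lambda_i$ with $a_i\in\ZZ_{\geq 0}$; since the functionals $\rho(D_i)=\rho'(D_i)$ form the basis of $\calN$ dual to $\lambda_1,\dots,\lambda_r$ (as recalled in the proof of Proposition~\ref{prop:mfs-geq}), we have $a_i=\<\rho(D_i),\gamma\>$ and therefore $s(\gamma)=\sum_{i=1}^r a_i$. On the other hand, since $X$ is spherical the ring $\CC[X]$ is multiplicity free, so the space of $B$-eigenvectors of weight $\gamma$ is at most one-dimensional; as $\prod_{i=1}^r f_{\lambda_i}^{a_i}$ is a nonzero such eigenvector, we get $f_\gamma=\mathrm{const}\cdot\prod_{i=1}^r f_{\lambda_i}^{a_i}$. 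By Lemma~\ref{lemma:wmdecomposition} the element $d(\gamma)$ is the multidegree of $f_\gamma$, so $d(\gamma)=\sum_{i=1}^r a_i\,d(\lambda_i)$ and hence $|d(\gamma)|=\sum_{i=1}^r a_i\,|d(\lambda_i)|$. Combining,
\[
|d(\gamma)|-s(\gamma)=\sum_{i=1}^r a_i\bigl(|d(\lambda_i)|-1\bigr).
\]

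Next I would determine the integers $|d(\lambda_i)|$. For $i\in\{1,\dots,t\}$ the function $f_{\lambda_i}$ is a linear form on $V_i$, so $d(\lambda_i)$ is the $i$-th standard basis vector of $\ZZ^t$, $|d(\lambda_i)|=1$, and the corresponding terms in the displayed sum vanish. For $i>t$ I claim $|d(\lambda_i)|\geq 2$. Indeed $f_{\lambda_i}$ is a nonconstant polynomial, because its divisor is $D_i\neq 0$, so $|d(\lambda_i)|\geq 1$; and if $|d(\lambda_i)|=1$, then $f_{\lambda_i}$ would be a linear form on $X$, i.e.\ a $B$-eigenvector in $X^*=V_1^*\oplus\dots\oplus V_t^*$ whose multidegree is a standard basis vector of $\ZZ^t$, say the $j$-th one, which forces $f_{\lambda_i}\in V_j^*$. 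Since $V_j^*$ is an irreducible $G$-module, $f_{\lambda_i}$ is then proportional to its highest weight vector $f_{\lambda_j}$, so $\lambda_i=\lambda_j$, and as $\lambda_1,\dots,\lambda_r$ are distinct free generators this gives $i=j\leq t$, a contradiction.

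Putting the two steps together yields
\[
|d(\gamma)|-s(\gamma)=\sum_{i=t+1}^r a_i\bigl(|d(\lambda_i)|-1\bigr)\geq \sum_{i=t+1}^r a_i\geq 0,
\]
with equality if and only if $a_i=0$ for all $i>t$, that is, if and only if $\gamma\in\Span_{\ZZ_{\geq 0}}\{\lambda_1,\dots,\lambda_t\}$, which is exactly the assertion. The only delicate point, and the main (mild) obstacle, is the inequality $|d(\lambda_i)|\geq 2$ for $i>t$, i.e.\ that the ``new'' free generators $D_i$ of $\calM^+$ are not hyperplanes: this rests on the observation that, up to scalars, the only $B$-semiinvariant linear forms on $X$ are the highest weight covectors $f_{\lambda_1},\dots,f_{\lambda_t}$ coming from the irreducible summands $V_1,\dots,V_t$. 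Everything else is bookkeeping with the dual bases $(\lambda_i)$, $(\rho(D_i))$ and the additivity of the multidegree under the factorization of $f_\gamma$.
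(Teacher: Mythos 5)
Your proof is correct and follows the same route as the paper: expand $\gamma$ on the free generators, use multiplicity-freeness to factor $f_\gamma$ as $\prod f_{\lambda_i}^{a_i}$, and compare $s(\gamma)=\sum a_i$ with $|d(\gamma)|=\sum a_i\deg(f_{\lambda_i})$. The only difference is that the paper simply recalls that $\deg(f_{\lambda_i})\geq 2$ for $i>t$ (this was set up in the proof of Proposition~\ref{prop:mfs-geq}, where the only degree-one generators among the $f_{\lambda_i}$ are the linear forms $f_{\lambda_1},\ldots,f_{\lambda_t}$), while you supply a short self-contained justification of this fact; both are fine.
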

\begin{proof}
Write
\[
\gamma = \sum_{i=1}^{r} c_i\lambda_i
\]
with $c_i\in\ZZ_{\geq0}$ for all $i$, then $f_\gamma$ is a scalar multiple of the function
\[
\prod_{i=1}^{r} f_{\lambda_i}^{c_i}.
\]
We have
\[
s(\gamma) = \sum_{i=1}^{r} c_i
\]
and
\[
|d(\gamma)| = \sum_{i=1}^{r} c_i\deg(f_{\lambda_i}).
\]
The lemma follows recalling that $f_{\lambda_i}$ has degree $1$ if $i\in\{1,\ldots,t\}$ and degree $\geq 2$ otherwise.
\end{proof}

\begin{corollary}\label{cor:max}
For all $d\in(\ZZ_{\geq0})^t$, the function $s_d$ attains its maximum in the unique point $\lambda(d)$, where the value of $s_d$ is equal to $|d|$.
\end{corollary}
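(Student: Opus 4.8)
The plan is to deduce the corollary from Lemma~\ref{lemma:estimate} by a rescaling argument, using that $\calM^+_{d,\QQ}$ is by definition $\left(\Span_{\QQ_{\geq0}}\calM^+\right)\cap\left(\lambda(d)-\cone\Sigma\right)$, so that every point of it is a nonnegative rational combination of elements of the weight monoid $\calM^+$.

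First I would record that $\lambda(d)$ is a feasible point with value $|d|$. Since every $\lambda_j\in\calM^+$, the element $\lambda(d)=\sum_{j=1}^t d_j\lambda_j$ lies in $\Span_{\QQ_{\geq0}}\calM^+$, and it lies in $\lambda(d)-\cone\Sigma$ because $0\in\cone\Sigma$; hence $\lambda(d)\in\calM^+_{d,\QQ}$. Moreover, since the $\rho(D_i)$ form the basis dual to the free generators $\lambda_1,\ldots,\lambda_r$ of $\calM^+$ (equivalently, applying Lemma~\ref{lemma:estimate} to $\gamma=\lambda_j$, for which $s(\lambda_j)=|d(\lambda_j)|=1$), we get
\[
s_d(\lambda(d))=\sum_{j=1}^t d_j\,s(\lambda_j)=\sum_{j=1}^t d_j=|d|.
\]

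Next, let $\gamma\in\calM^+_{d,\QQ}$ be arbitrary. Choose $N\in\ZZ_{>0}$ such that $N\gamma\in\calM^+$; this is possible because $\gamma$ is a nonnegative rational combination of elements of the monoid $\calM^+$. Multiplying the relation $\lambda(d)-\gamma\in\cone\Sigma$ by $N$ shows that $N\gamma\in\lambda(Nd)-\cone\Sigma$, hence $N\gamma\in\calM^+_{Nd}$, so $d(N\gamma)=Nd$ and $|d(N\gamma)|=N|d|$. Applying Lemma~\ref{lemma:estimate} to $N\gamma$ we obtain
\[
s_d(\gamma)=\frac{1}{N}\,s(N\gamma)\leq\frac{1}{N}\,|d(N\gamma)|=|d|,
\]
and if equality holds the same lemma forces $N\gamma\in\Span_{\ZZ_{\geq0}}\{\lambda_1,\ldots,\lambda_t\}$; but any element of $\Span_{\ZZ_{\geq0}}\{\lambda_1,\ldots,\lambda_t\}$ equals $\lambda(d')$ with $d'$ its own multidegree, so $N\gamma=\lambda(d(N\gamma))=\lambda(Nd)=N\lambda(d)$, whence $\gamma=\lambda(d)$. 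Combining the two parts, $s_d$ attains its maximum $|d|$ on $\calM^+_{d,\QQ}$, and only at the point $\lambda(d)$.

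I do not expect a real obstacle: the only step needing care is passing from the integral statement of Lemma~\ref{lemma:estimate} to the rational polytope $\calM^+_{d,\QQ}$, which is exactly what the clearing of denominators handles, and which works precisely because $\calM^+_{d,\QQ}$ is built from $\Span_{\QQ_{\geq0}}\calM^+$ rather than from the saturation of $\calM^+$ (the two agreeing here, as $\calM^+$ is free). Observe also that compactness of $\calM^+_{d,\QQ}$ is never used, since the inequality $s_d\leq|d|$ is proved pointwise.
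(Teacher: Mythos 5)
Your proof is correct and follows essentially the same route as the paper: clear denominators to reduce to the integral statement of Lemma~\ref{lemma:estimate}, then divide by $N$. The only (harmless) differences are that you scale into $\calM^+$ directly rather than into $\calM$ as the paper does, and you explicitly verify that $\lambda(d)\in\calM^+_{d,\QQ}$ attains the value $|d|$, which the paper leaves implicit.
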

\begin{proof}
Let $\gamma\in \calM^+_{d,\QQ}$ and choose $n\in\ZZ_{\geq1}$ such that $n\gamma\in \calM$, which implies $n\gamma\in \calM^+_{nd}$. By~Lemma~\ref{lemma:estimate} we have
\[
s_d(\gamma) = \frac1ns_{nd}(n\gamma)\leq \frac1n |nd|=|d|,
\]
and the equality is attained only in the case $n\gamma = \lambda(nd) = n\lambda(d)$ i.e.\ $\gamma=\lambda(d)$.
\end{proof}

\begin{corollary}\label{cor:1.2mfs}
If $X$ is a spherical module, the function $p_X$ attains its maximum in the unique point $\vartheta$ of the proof of Proposition~\ref{prop:mfs-geq}. In particular we have $\wp(X)=\dim(X)-\rank(X)$.
\end{corollary}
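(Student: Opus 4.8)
The plan is to complement the inequality of Proposition~\ref{prop:mfs-geq} with a matching upper bound obtained from Corollary~\ref{cor:max}. I keep the simplified notation: $D_1,\dots,D_r$ are the $B$-stable prime divisors of $X$, $\lambda_1,\dots,\lambda_r$ the corresponding free generators of $\calM^+$ (with $\lambda_i$ the highest weight of $V_i^*$ for $i\leq t$), and the $\rho'(D_i)$ form the dual basis of the $\lambda_i$. First I would record that $p_X(\theta)=\sum_{i=1}^r(m_{D_i}-1)+s(\theta)$ for every $\theta\in\Span_\QQ\Sigma$, which is immediate from the definitions. Next I would make the point $\vartheta$ of the proof of Proposition~\ref{prop:mfs-geq} explicit: since the divisor of $f_{\lambda_i}$ on $X$ is $D_i$ and $f_\vartheta$ has divisor $\delta=\sum_{i=1}^t\dim(V_i)\,D_i-\sum_{i=1}^r m_{D_i}D_i$ on $X$, we get $\vartheta=\mu_1-\mu_0$ with $\mu_1=\sum_{i=1}^t\dim(V_i)\lambda_i$ and $\mu_0=\sum_{i=1}^r m_{D_i}\lambda_i$. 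Putting $d_0=(\dim V_1,\dots,\dim V_t)$, so that $\mu_1=\lambda(d_0)$ and $|d_0|=\dim X$, the inclusion $\vartheta\in\cone\Sigma$ established in that proof gives $\mu_0=\mu_1-\vartheta\in\lambda(d_0)+\Span_\QQ\Sigma$, hence $\mu_0\in\calM^+_{d_0}$ by Lemma~\ref{lemma:wmdecomposition}.

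Second, I would prove a rational refinement of the last formula for $\calM^+_d$ in Lemma~\ref{lemma:wmdecomposition}: for $d\in(\ZZ_{\geq0})^t$, every $\gamma\in\Span_{\QQ_{\geq0}}\calM^+$ with $\gamma-\lambda(d)\in\Span_\QQ\Sigma$ lies in $\calM^+_{d,\QQ}$. Indeed $\Span_{\QQ_{\geq0}}\calM^+=\bigoplus_i\QQ_{\geq0}\lambda_i$, so there is an integer $n\geq1$ with $n\gamma\in\calM^+$; then $n\gamma-\lambda(nd)\in\Span_\QQ\Sigma$, whence $n\gamma\in\calM^+_{nd}\subseteq\lambda(nd)-\cone\Sigma=n(\lambda(d)-\cone\Sigma)$ by Lemma~\ref{lemma:wmdecomposition}, and dividing by $n$ gives $\gamma\in\lambda(d)-\cone\Sigma$; combined with $\gamma\in\Span_{\QQ_{\geq0}}\calM^+$ this is the claim.

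Finally, I would take an arbitrary $\theta\in Q^*\cap\cone\Sigma$ and set $\gamma=\mu_0+\theta$. As $\theta\in\cone\Sigma\subseteq\Span_\QQ\Sigma$, we have $\gamma-\lambda(d_0)=\theta-\vartheta\in\Span_\QQ\Sigma$; and since $\langle\rho'(D_i),\mu_0\rangle=m_{D_i}$ while the inequalities defining $Q^*$ are $\langle\rho'(D_i),\theta\rangle\geq-m_{D_i}$, we get $\langle\rho'(D_i),\gamma\rangle\geq0$ for all $i$, i.e.\ $\gamma\in\Span_{\QQ_{\geq0}}\calM^+$. By the previous step $\gamma\in\calM^+_{d_0,\QQ}$, so Corollary~\ref{cor:max} gives $s(\gamma)\leq|d_0|=\dim X$, with equality precisely when $\gamma=\lambda(d_0)=\mu_1$, i.e.\ when $\theta=\mu_1-\mu_0=\vartheta$. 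Using $s(\theta)=s(\gamma)-s(\mu_0)$ we obtain $p_X(\theta)\leq\sum_{i=1}^r(m_{D_i}-1)+\dim X-s(\mu_0)$ with equality only at $\theta=\vartheta$; as the right-hand side is independent of $\theta$, the function $p_X$ attains its maximum at the unique point $\vartheta$, and by the proof of Proposition~\ref{prop:mfs-geq} this maximum value is $p_X(\vartheta)=\dim X-\rank X$, so that $\scrP(X)=\dim X-\rank X$.

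I expect the crux to be the membership $\gamma\in\calM^+_{d_0,\QQ}$ in the last step: one cannot check $\gamma\in\lambda(d_0)-\cone\Sigma$ directly, since $\vartheta-\theta$ need not lie in $\cone\Sigma$; it is the rational refinement of Lemma~\ref{lemma:wmdecomposition} — which in turn rests on the finiteness of the sets $\calM^+_d$, the source of the boundedness exploited in Corollary~\ref{cor:max} — that makes the upper bound go through.
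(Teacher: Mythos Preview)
Your argument is correct and is essentially the paper's own proof: both translate $Q^*\cap\cone\Sigma$ by $\mu_0=\kappa=\sum_i m_{D_i}\lambda_i$ into $\calM^+_{d(\kappa),\QQ}$ and then invoke Corollary~\ref{cor:max}. Your explicit identification $d(\kappa)=d_0=(\dim V_1,\ldots,\dim V_t)$ and your ``rational refinement'' of Lemma~\ref{lemma:wmdecomposition} make precise the step $\mu_0+\theta\in\calM^+_{d_0,\QQ}$, which the paper's proof states (as the equality $\kappa+(Q^*\cap\cone\Sigma)=\calM^+_{d(\kappa),\QQ}$) but does not spell out; otherwise the two proofs coincide.
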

\begin{proof}
Let $\mu\in Q^*\cap \cone\Sigma$ be any point, and consider
\[
\kappa = \sum_{i=1}^r m_{D_i} \lambda_i.
\]
Notice that $\mu+\kappa$ is in $\Span_{\QQ_{\geq 0}}\calM^+$, to check this we compute
\[
\<\rho(D_i),\mu+\kappa\> = \<\rho(D_i),\mu\>+m_{D_i}\geq -m_{D_i}+m_{D_i}\geq 0
\]
for all $i$. Then $\mu+\kappa$ is in $\calM^+_{d(\kappa),\QQ}$. We have even an equality
\[
\kappa + (Q^*\cap \cone\Sigma) = \calM^+_{d(\kappa),\QQ}.
\]
Indeed, we have just proved one inclusion, and the other inclusion follows from a similar argument. Finally, we observe that
\[
p_X(\mu) = s_{d(\kappa)}(\mu+\kappa)-r
\]
for all $\mu\in Q^*\cap \cone\Sigma$. By Corollary~\ref{cor:max} the maximum of this function for $\mu\in Q^*\cap \cone\Sigma$ is attained in the unique point $\mu+\kappa = \lambda(d(\kappa))$. This point is precisely $\vartheta+\kappa$ by Lemma~\ref{lemma:wmdecomposition}, because $\vartheta+\kappa$ is in $ \Span_{\ZZ_{\geq0}}\{\lambda_1,\ldots,\lambda_t\}$ by construction, and $\vartheta$ is in $\cone\Sigma$.
\end{proof}

We end this section with some additional information on the point $\vartheta$.

\begin{lemma}\label{lemma:interior}
The point $\vartheta$ of the proof of Proposition~\ref{prop:mfs-geq} is a linear combination of spherical roots where all coefficients are strictly positive.
\end{lemma}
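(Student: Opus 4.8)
The plan is to extract explicit information about the point $\vartheta$ from its construction in the proof of Proposition~\ref{prop:mfs-geq} and from the monoid analysis just completed. Recall that $\vartheta$ was defined as the $B$-eigenvalue of a $B$-semiinvariant rational function $f$ on $\overline X$ with divisor $\delta=\sum_{i=1}^t \dim(V_i)D_{i,1}-\sum_{(i,s)\in A}m_{i,s}D_{i,s}$; equivalently, in the simplified notation of this section, $\vartheta+\kappa=\lambda(d(\kappa))$ where $\kappa=\sum_{i=1}^r m_{D_i}\lambda_i$, so that
\[
\vartheta = \lambda(d(\kappa)) - \kappa = \sum_{i=1}^t\bigl(\deg(f_\kappa)_i - m_{D_i}\bigr)\lambda_i - \sum_{i=t+1}^r m_{D_i}\lambda_i.
\]
By~(\ref{eq:directsum}) we already know $\vartheta\in\Span_\QQ\Sigma$, and by Corollary~\ref{cor:1.2mfs} (together with the verification inside the proof of Proposition~\ref{prop:mfs-geq}) we know $\vartheta\in\cone\Sigma$. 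So $\vartheta=\sum_{\sigma\in\Sigma}a_\sigma\sigma$ with all $a_\sigma\geq 0$, and the content of the lemma is that in fact every $a_\sigma>0$.

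First I would reformulate the claim in terms of the valuation cone. Writing $\vartheta=\sum a_\sigma\sigma$, the coefficient $a_\sigma$ can be recovered as $-\langle \nu_\sigma,\vartheta\rangle$ up to a positive factor, where $\nu_\sigma$ is the element of $\mathcal N_\QQ$ dual (within $\Span_\QQ\Sigma$, using the basis $\Sigma$) to $\sigma$; more usefully, $\nu_\sigma$ lies on an extremal ray of $\calV$ (the cone $\calV$ restricted to $\Span_\QQ\Sigma$ is the simplicial cone cut out by $\langle\nu,\sigma'\rangle\leq 0$ for $\sigma'\in\Sigma$, so its extremal rays are exactly these duals, up to sign). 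Hence $a_\sigma>0$ for all $\sigma$ is equivalent to saying that $\vartheta$ lies in the \emph{interior} of the dual cone of $\calV$, i.e.\ that $\langle\nu,\vartheta\rangle<0$ for every nonzero $\nu\in\calV$. In the proof of Proposition~\ref{prop:mfs-geq} it was shown that $\langle\rho'(D_{i,s}),\vartheta\rangle=-m_{i,s}<0$ for all $(i,s)\in N$ and $\langle\rho'(E_j),\vartheta\rangle=0$; the key point will be that $\calV$ (inside $\Span_\QQ\Sigma$) is generated by the images of the $\rho'(D_{i,s})$ for $(i,s)\in N$. Indeed, by Corollary~\ref{cor:skeletonV} the elements $\rho'(D)$ for $D\in\calD$ span $(\Span_\QQ\Sigma)^*$, and the elements $\rho'(E_j)$ are orthogonal to $\vartheta$ and moreover span a complement to $\Span_\QQ\Sigma$ in $\calN_\QQ$ (they form, together with a basis of $\Span\Sigma$, a basis of $\calN_\QQ$, by the relation $e_{j,i,1}=-\delta_{ij}$); projecting to $\Span_\QQ\Sigma$ kills them and leaves the $\rho'(D_{i,s})$, $(i,s)\in N$, which therefore generate $(\Span_\QQ\Sigma)^*$ as projections — and they all take the strictly negative value $-m_{i,s}$ on $\vartheta$.

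The remaining step, and the main obstacle, is to go from "the projections $\bar\rho'(D_{i,s})$, $(i,s)\in N$, positively span $(\Span_\QQ\Sigma)^*$ and each pairs strictly negatively with $\vartheta$" to "every extremal ray $\nu_\sigma$ of $\calV$ pairs strictly negatively with $\vartheta$". A positive spanning set need not contain all extremal rays, so one cannot simply conclude ray-by-ray. Instead I would argue by linear algebra on the closed convex cone $\calU'=\cone\Sigma$ (the dual of $\calV$ in $\Span_\QQ\Sigma$): the facets of $\cone\Sigma$ are the hyperplanes $\langle\nu_\sigma,-\rangle=0$, and a point of $\cone\Sigma$ lies in its interior iff it lies on no facet. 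If $\vartheta$ were on a facet $\langle\nu_{\sigma_0},-\rangle=0$, then the whole ray $\QQ_{\geq0}\nu_{\sigma_0}$ would be $\vartheta$-orthogonal; but $\nu_{\sigma_0}\in\calV$ is a nonnegative combination of the generators $\bar\rho'(D_{i,s})$, $(i,s)\in N$ (here I use that these generate $\calV\cap\Span_\QQ\Sigma$ as a cone, which follows because $\calV$ is the full dual of $\cone\Sigma$ and these projections cut out precisely $\cone\Sigma$ by the computation above), and since each summand pairs $\leq -m_{i,s}<0$ with $\vartheta$ and the combination is nonzero, we get $\langle\nu_{\sigma_0},\vartheta\rangle<0$, a contradiction. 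Hence no facet contains $\vartheta$, so $\vartheta$ is interior, so all $a_\sigma>0$. The one genuinely delicate bookkeeping point is to make sure $\{\bar\rho'(D_{i,s}):(i,s)\in N\}$ \emph{generates} $\calV\cap\Span_\QQ\Sigma$ as a cone and not merely spans it linearly; this is where completeness of the skeleton (Corollary~\ref{cor:skeletonV}) and the fact that these projections cut out $\cone\Sigma$ from the $\vartheta$-side must be combined carefully.
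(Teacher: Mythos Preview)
Your dualization approach is different from the paper's and can be made to work, but as you yourself flag, the crucial step is not justified. You need each extremal ray $\nu_\sigma$ of the image of $\calV$ in $(\Span_\QQ\Sigma)^*$ to lie in the convex cone generated by the restrictions $\bar\rho'(D_{i,s})$, $(i,s)\in N$. Appealing to Corollary~\ref{cor:skeletonV} only gives \emph{linear} spanning, and your parenthetical ``these projections cut out precisely $\cone\Sigma$'' is asserted, not proved. The clean fix is already in the paper: use the inclusion $\calV\subseteq\calU$ established as the ``crucial claim'' in the proof of Proposition~\ref{prop:mfs-geq}. Combine this with the fact that each $\rho'(E_j)$ vanishes on $\Span_\QQ\Sigma$ (equivalently lies in the linear part of $\calV$), which follows from Lemma~\ref{lemma:wmdecomposition}: the value $\langle\rho'(E_j),\gamma\rangle$ is minus the $j$-th degree of $f_\gamma$, i.e.\ $-d(\gamma)_j$, and $d(\gamma)$ depends only on the component of $\gamma$ in $\Span_\QQ\{\lambda_1,\ldots,\lambda_t\}$ by~(\ref{eq:directsum}). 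Hence restricting to $\Span_\QQ\Sigma$ kills the generators $\pm\rho'(E_j)$ of $\calU$, so the image of $\calV$ sits inside the cone generated by the $\bar\rho'(D_{i,s})$, $(i,s)\in N$, and your strict negativity argument goes through. Your sentence ``span a complement to $\Span_\QQ\Sigma$ in $\calN_\QQ$'' does not by itself give this vanishing; spanning a complement is weaker than lying in the annihilator.

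For comparison, the paper's own proof avoids the dual picture entirely and is a two-line perturbation. Since all $m_{D_i}>0$, the point $\kappa=\sum_i m_{D_i}\lambda_i$ lies in the \emph{interior} of the cone $\Span_{\QQ_{\geq0}}\calM^+$; hence $\kappa+\varepsilon\sum_{\sigma\in\Sigma}\sigma$ is still in that cone for small $\varepsilon>0$. By Lemma~\ref{lemma:wmdecomposition} this perturbed point stays in $\calM^+_{d(\kappa),\QQ}$, so subtracting $\lambda(d(\kappa))=\vartheta+\kappa$ shows $\vartheta-\varepsilon\sum_\sigma\sigma\in\cone\Sigma$, which is exactly interiority. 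Your approach trades this short convexity argument for a longer dual one; both ultimately rest on the same structural inputs (the decomposition~(\ref{eq:directsum}) and the analysis of $\calV$ versus the $D_{i,s}$), but the paper's route gets there with less bookkeeping.
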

\begin{proof}
Recall the definition of $\kappa$ from the proof of Corollary~\ref{cor:1.2mfs}, and recall that $\vartheta+\kappa=\lambda(d(\kappa))$. Notice that $\kappa$ is in the interior of the convex cone $\Span_{\QQ_{\geq 0}}\calM^+$. Write
\[
\kappa = \underbrace{\lambda(d(\kappa))}_{\in\Span_{\ZZ_{\geq0}}\{\lambda_1,\ldots,\lambda_t\}} - \underbrace{\left(\lambda(d(\kappa))-\kappa\right)}_{=\vartheta\in\cone\Sigma}
\]
so $\kappa$ is in $\calM'_{d(\kappa)}$. There exists a rational number $\varepsilon>0$ small enough such that
\[
\kappa'=\kappa+\varepsilon \sum_{\sigma\in\Sigma}\sigma
\]
is in the cone $\Span_{\QQ_{\geq 0}}\calM^+$. So, by Lemma~\ref{lemma:wmdecomposition}, also the point $\kappa'$ is in $\calM^+_{d(\kappa),\QQ}$ which implies that
\[
\vartheta -\varepsilon \sum_{\sigma\in\Sigma}\sigma
\]
is again in $\cone\Sigma$.
\end{proof}

\section{Proof of Theorem~\ref{MT}}\label{s:proof}

From Section~\ref{s:skeleton} it follows that to prove Theorem~\ref{MT} is enough to verify Conjecture~\ref{GHC2} for the spherical skeletons coming from complete spherical varieties with a reductive generic isotropy group.

Here we start the proof by a case-by-case verification of the inequality \eqref{ineq}.

First notice that the spherical closure of a reductive spherical subgroup $H$ is reductive, since the quotient $N_GH/H$ is diagonalizable. Furthermore, if $H$ is a spherically closed reductive spherical subgroup of $G$ we can assume that $G$ is semi-simple and simply connected, with $H$ containing the whole center of $G$. Then $G\cong G_1\times\cdots\times G_k$ and $G/H$ is $G$-isomorphic to the direct product $G_1/H_1\times\cdots\times G_k/H_k$ where, for any $i=1,\ldots,k$, $H_i$ is one of the spherically closed reductive spherical subgroups occurring in the list of \cite[Section 3]{BP15}, where the corresponding (spherically closed) spherical systems are also given. 

Let us now recall some more technical definitions and simple facts from \cite[Section 8]{GH} which allow to reduce the verification of the inequality \eqref{ineq} to a finite number of cases.

A spherical skeleton $\mathcal R=(\Delta, S^p, \Sigma, \Gamma)$ is called \emph{elementary} if $\langle \rho(D), \gamma\rangle\in\{0,-1\}$ for all $D\in\Gamma$ and all $\gamma\in \Sigma$, and moreover for every $D\in\Gamma$ there exists at most one $\gamma\in\Sigma$ such that $\langle\rho(D),\gamma\rangle=-1$. An elementary spherical skeleton $\mathcal R$ is called \emph{reduced} if for every $\gamma\in\Sigma$ there exists at most one $D\in\Gamma$ such that $\langle \rho(D),\gamma\rangle=-1$.

Let $\|\Gamma\|$ denote the \emph{support} of $\Gamma$, namely, the set consisting of the elements $\gamma\in\Sigma$ such that there exists at least one $D\in\Gamma$ with $\langle \rho(D),\gamma\rangle<0$. If $\mathcal R$ is reduced elementary then $\Gamma$ is uniquely determined by its support $\|\Gamma\|$.

For every spherical skeleton $\mathcal R=(\Delta, S^p, \Sigma, \Gamma)$ one can simplify the set $\Gamma$, as done in \cite{GH}, to obtain an elementary spherical skeleton $\mathcal R^e=(\Delta, S^p, \Sigma, \Gamma^e)$ with the same underlying spherical system and $\Gamma^e=\cup_{\gamma\in\Sigma}\Gamma_\gamma$ where $\Gamma_\gamma$ is a set with $-\sum_{D\in\Gamma}\langle\rho(D),\gamma\rangle$ elements giving $-1$ on $\gamma$ and $0$ on the other spherical roots. One can further simplify the set $\Gamma^e$ to obtain a reduced elementary spherical skeleton $\mathcal R^r=(\Delta, S^p, \Sigma, \Gamma^r)$ by retaining only one element from each $\Gamma_\gamma$. The spherical skeletons $\mathcal R^r$ and $\mathcal R^e$ are complete if $\mathcal R$ is complete, and $\mathscr P(\mathcal R)\leq\mathscr P(\mathcal R^e)\leq\mathscr P(\mathcal R^r)$.

Furthermore, for two reduced elementary spherical skeletons $\mathcal R_i=(\Delta, S^p, \Sigma, \Gamma_i)$, $i=1,2$, with the same underlying spherical system, if $\|\Gamma_1\|\subset\|\Gamma_2\|$ then $\mathscr P(\mathcal R_2)\leq\mathscr P(\mathcal R_1)$. 

Therefore, it is enough to prove the inequality \eqref{ineq} for all the reduced elementary complete spherical skeletons with $\Gamma$ of minimal support and underlying spherical system corresponding to a spherically closed reductive spherical subgroup of a semi-simple simply connected complex algebraic group.

As recalled above, if $H$ is a spherically closed reductive spherical subgroup of a semi-simple simply connected $G$, the corresponding spherical system $\mathcal S=(\Delta, S^p, \Sigma)$ decomposes as the direct product of \emph{indecomposable} spherically closed spherical systems $\mathcal S_1\times\cdots\times\mathcal S_k$ occurring in the list of \cite[Section 3]{BP15}. Moreover, if $\mathcal R=(\Delta, S^p, \Sigma, \Gamma)$ is a reduced elementary spherical skeleton with $\mathcal S$ as underlying spherical system, then $\mathcal R$ necessarily decomposes into the direct product of reduced elementary spherical skeletons $\mathcal R_1\times\cdots\times\mathcal R_k$ with $\mathcal S_1,\ldots,\mathcal S_k$ underlying spherical systems, respectively, complete if $\mathcal R$ is complete, such that $\mathscr P(\mathcal R)=\mathscr P(\mathcal R_1)+\ldots+\mathscr P(\mathcal R_k)$.

It follows that we are left to verify the inequality \eqref{ineq} for all the reduced elementary complete spherical skeletons with $\Gamma$ of minimal support and underlying spherically closed spherical system coming from the list of \cite[Section 3]{BP15}.

In \cite{GH} this has already been done for the symmetric cases, thus we are left with the non-symmetric cases.

\begin{remark}
For the symmetric cases all reduced elementary spherical skeletons with $\Gamma$ of cardinality one are complete. Here in general this is no longer true, so we will have to handle with spherical skeletons with $\Gamma$ of cardinality greater than one. 
\end{remark}

\begin{remark}\label{AQ}
The combinatorics of spherical systems provides an easy way to show that certain reduced elementary spherical skeletons are not complete. Indeed, let $\Delta'\subset\Delta$ be a \emph{distinguished} subset of colors, that is, there exist $c_D>0$, for all $D\in\Delta'$, such that $\langle \rho(\sum_{D\in\Delta'}c_D D),\gamma\rangle\geq0$ for all $\gamma\in\Sigma$, with a non-trivial \emph{quotient} spherical system corresponding to a reductive spherical subgroup, that is, the set of colors $\Delta\setminus\Delta'$ and the set of spherical roots $\Sigma/\Delta'$ of the quotient (the basis of the free monoid of elements $\gamma\in\mathbb N\Sigma$ such that $\langle \rho(D),\gamma\rangle=0$ for all $D\in\Delta'$) have the following property: there exist $c_\gamma\geq0$, for all $\gamma\in\Sigma/\Delta'$, such that $\langle \rho(D),\sum_{\gamma\in\Sigma/\Delta'}c_\gamma \gamma\rangle>0$ for all $D\in\Delta\setminus\Delta'$. Let $\Sigma'$ denote the set of spherical roots $\gamma\in\Sigma$ such that $\langle \rho(\sum_{D\in\Delta'}c_D D),\gamma\rangle>0$. Then it is easy to see that every reduced elementary spherical skeleton $\mathcal R=(\Delta,S^p,\Sigma,\Gamma)$ with $\|\Gamma\|\subseteq\Sigma'$ is not complete. Since the quotients of the spherical systems corresponding to reductive spherical subgroups are known (see \cite{B13} for further details) this simplifies the computations by ruling immediately out the non-complete cases.
\end{remark}

For every reduced elementary complete spherical skeleton $\mathcal R$ with $\Gamma$ of minimal support we will compute $\mathscr P(\mathcal R)$ checking that it is less or equal than $|R^+\setminus R^+_{S^p}|$. When equality holds we will compute the (unique) point $\theta\in Q^*_{\mathcal R}\cap\mathrm{cone}(\Sigma)$ where the maximum of the functional $\sum_{D\in\mathcal D}(m_D-1+\langle\rho(D),\theta\rangle)$ is achieved, for later use.

To compute $\mathscr P(\mathcal R)$ one has to solve the linear program
\begin{equation}\label{AXB}
\max\{c^tx\ :\ x\in\mathbb R^\Sigma,\ A\,x\leq b,\ x\geq 0\}
\end{equation}
where $c$ and $x$ are column vectors in $\mathbb R^\Sigma$, $c_\gamma=\sum_{D\in\mathcal D}\langle\rho(D),\gamma\rangle$ for all $\gamma\in\Sigma$, $A$ is a matrix in $\mathbb R^{\mathcal D\times\Sigma}$, $A_{D,\gamma}=-\langle\rho(D),\gamma\rangle$ for all $D\in\mathcal D$ and $\gamma\in\Sigma$, and $b$ is a column vector in $\mathbb R^{\mathcal D}$, $b_D=m_D$ for all $D\in\mathcal D$. Then 
\[\mathscr P(\mathcal R)=\max\{c^tx\ :\ A\,x\geq b, x\geq0\}+\sum_{D\in\mathcal D}(m_D-1).\]

Equivalently, one can solve the dual linear program
\begin{equation}\label{AYC}
\min\{b^ty\ :\ y\in\mathbb R^{\mathcal D},\ A^ty\leq c,\ y\geq 0\},
\end{equation}
then
\[\mathscr P(\mathcal R)=\min\{b^ty\ :\ A^ty\leq c,\ y\geq 0\}+\sum_{D\in\mathcal D}(m_D-1).\]

\subsection*{The case-by-case computations}

We follow the list of non-symmetric spherically closed reductive spherical subgroups $H$ of semi-simple groups $G$, the cases are enumerated accordingly with \cite{BP15}: they vary from number 31 to number 39, the number 40 being not spherically closed, and from number 41 to number 50.

The isogeny type of the group $G$ is chosen just for notational convenience, it might be neither of simply connected type nor of adjoint type, but the subgroup $H$ will always contain the center $\mathrm Z_G$ of $G$. 

The simple roots are denoted $\alpha_1,\alpha_2,\ldots$ enumerated as in Bourbaki and $\alpha'_1,\alpha'_2,\ldots$, $\alpha''_1,\ldots$ in further irreducible components.

The spherical system $(\Delta, S^p, \Sigma)$ will be described explicitly; if it exists, we will also give a distinguished subset of colors $\Delta'\subset\Delta$ with a quotient corresponding to a reductive spherical subgroup together with the corresponding subset of spherical roots $\Sigma'\subset\Sigma$, see Remark~\ref{AQ} above.

For the acquainted reader we include the Luna diagrams, whose significance will not be recalled here, see \emph{loc.cit.}

\subsection*{31}

$\GL(1)\times\Sp(2p)\subset\SL(2p+1)$, $p\geq2$:
$\Sigma=\{\gamma_i=\alpha_i+\alpha_{i+1}\ :\ 1\leq i\leq 2p-1\}$, $\Delta=\{D_i\ :\ 1\leq i\leq 2p\}$, $\rho(D_i)=\alpha^\vee_i$ $\forall\ i$, $S^p=\emptyset$, $\Delta'=\Delta\setminus\{D_1,D_{2p}\}$, $\Sigma'=\{\gamma_{2k}\ :\ 1\leq k\leq p-1\}$.

\[\begin{picture}(27000,1800)\put(0,600){\diagramacastn}\put(12000,900){\vector(1,0){3000}}
\put(15900,900){
\multiput(0,0)(1800,0){2}{\usebox{\edge}}\put(3600,0){\usebox{\susp}}\put(7200,0){\usebox{\dynkinathree}}\put(10800,0){\usebox{\wcircle}}\put(0,0){\usebox{\wcircle}}
\put(0,0){\multiput(0,0)(25,25){13}{\circle*{70}}\multiput(300,300)(300,0){34}{\multiput(0,0)(25,-25){7}{\circle*{70}}}\multiput(450,150)(300,0){34}{\multiput(0,0)(25,25){7}{\circle*{70}}}\multiput(10800,0)(-25,25){13}{\circle*{70}}}
}
\end{picture}\] 

Let us solve the linear program \eqref{AYC} for the reduced elementary spherical skeletons with $\Gamma$ of cardinality one and support in $\Sigma\setminus\Sigma'$. Set $y_i=y_{D_i}$ for $1\leq i\leq 2p$ and $y_{2p+1}=y_D$ where $D$ is the unique element of $\Gamma$.   

If $\|\Gamma\|=\{\gamma_{2p-1}\}$ (the case $\|\Gamma\|=\{\gamma_{1}\}$ is equivalent by symmetry), we have for all $1\leq\ell<p$ (by induction on $\ell$)
\begin{eqnarray*}
&&y_{2\ell+1}\geq y_{2\ell-1}+y_{2\ell}-y_{2\ell-2}\geq\ell y_2+y_1+\ell\geq\ell\\
&&y_{2\ell+2}\geq y_{2\ell}+y_{2\ell+1}-y_{2\ell-1}\geq(\ell +1)y_2+\ell\geq\ell
\end{eqnarray*}
and
\[y_{2p+1}\geq y_{2p-1}+y_{2p}-y_{2p-2}\geq p\, y_2 + y_1 +p\geq p.\]
Then $\mathscr P=2p^2+p=|R^+|$. Dually, $\mathscr P$ is the maximum of $2p+x_{\gamma_1}$ which is achieved in 
\[\theta=\sum_{i=1}^{2p-1}\frac{i(i+1)}2\gamma_{2p-i}.\]

If $\|\Gamma\|=\{\gamma_{2k-1}\}$ (can assume $p/2 < k < p$), we have for all $1\leq\ell<k$  
\begin{eqnarray*}
&&y_{2\ell+1}\geq y_{2\ell-1}+y_{2\ell}-y_{2\ell-2}\geq\ell y_2+y_1+\ell\geq\ell\\
&&y_{2\ell+2}\geq y_{2\ell}+y_{2\ell+1}-y_{2\ell-1}\geq(\ell +1)y_2+\ell\geq\ell
\end{eqnarray*}
and similarly for $1 \leq m\leq p-k$, set $y'_i=y_{2p+1-i}$ for all $i$, we have
\begin{eqnarray*}
&&y'_{2m+2}\geq y'_{2m}+y'_{2m+1}-y'_{2m-1}\geq(m+1) y'_2+y_1+m\geq m\\
&&y'_{2m+1}\geq y'_{2m-1}+y'_{2m}-y'_{2m-2}\geq m\,y'_2+y'_1+m\geq m
\end{eqnarray*}
and
\[y_{2p+1}+y_{2k+1}\geq y_{2k-2}+y_{2k-1}-y_{2k-3}-1\geq y_2 + y_{2k-1}\geq k-1.\]
Then $\mathscr P=2p^2+p-2(p-k)(2k+1)<|R^+|$, for all $p/2< k < p$, and $\mathscr P=2p^2+p-2(k-1)(2p-2k+3)<|R^+|$, for all $1< k \leq p/2$.

\subsection*{32}

$\GL(p)\subset\SO(2p+1)$, $p\geq2$: 
$\Sigma = \{ \gamma_i = \alpha_i+\alpha_{i+1} \ :\ 1\leq i\leq p-1\}\cup \{ \gamma_p = \alpha_p\}$, 
$\Delta=\{D_i\ :\ 1\leq i\leq p-1\}\cup\{D_p^+,D_p^-\}$, $\rho(D_i)=\alpha^\vee_i$ $\forall\ i\leq p-1$, $\rho(D_p^+)=\rho(D_p^-)$ and $\rho(D_p^+)+\rho(D_p^-)=\alpha_p^\vee$, $S^p=\emptyset$, $\Delta'=\Delta\setminus\{D_1,D_p^-\}$, $\Sigma'=\{\gamma_{2k}\ :\ 1\leq k\leq p/2\}$.

\[\begin{picture}(27000,1800)
\put(300,900){
\put(0,0){\usebox{\atwo}}
\put(1800,0){\usebox{\atwoseq}}
\put(9000,0){\usebox{\btwo}}
\put(10800,0){\usebox{\aone}}
\put(10800,600){\usebox{\tow}}
}
\put(12000,900){\vector(1,0){3000}}
\put(15900,900){
\multiput(0,0)(1800,0){2}{\usebox{\edge}}\put(3600,0){\usebox{\susp}}\put(7200,0){\usebox{\dynkinbthree}}\put(0,0){\usebox{\gcircle}}\put(10800,0){\usebox{\wcircle}}
}
\end{picture}\]

Here $|R^+|=p^2$. Set $x_i=x_{\gamma_i}$ for $1\leq i\leq p$.   

Suppose $\|\Gamma\|=\{\gamma_1\}$: we are computing the sup of the function $(p-1)+(x_p-x_{p-2})$. The first two inequalities of the system are:
\[
x_1\leq 1,\; x_2\leq x_1+2
\]
Others are:
\[
x_k-x_{k-2}\leq x_{k-1}-x_{k-3}+2, \;\; \forall k\in \{ 4, \ldots, p\}
\]
which imply $x_p-x_{p-2}\leq 1+2(p-1)=2p-1$. It is also straightforward to check that this upper bound is attained, which yields $\mathscr P=3p-2$, and we have $\mathscr P \leq |R^+|$, with equality only for $p=2$. In the latter case the maximum is attained in 
\[\theta=\gamma_1+3\gamma_2.\]

Suppose that $\|\Gamma\|=\{ \gamma_{\ell} \}$ with $\ell$ odd and $\ell\geq 3$. We are computing the sup of the function $(p-1)+(x_p-x_{p-2}+x_1-x_\ell)$; notice that it could be $p=\ell$ or $p-2=\ell$, which will not influence our estimates. Write $z_k = x_k - x_{k-2}$, so the function is
\[
(p-1)+(z_p - (z_\ell+z_{\ell -2}+\ldots+z_3)).
\]
The next-to-last inequality of the system is
\[
(-z_p)\leq 1
\]
and some of the others are
\[
(-z_k)\leq (-z_{k+1})+2
\]
for all $k<p$, which imply
\[
(-z_k) \leq (-z_p)+2(p-k)\leq 1+2(p-k).
\]
Combining these and setting $\ell=2m+1$ we get
\[
(p-1)+(z_p - (z_\ell+z_{\ell -2}+\ldots+z_3)) \leq
\]
\[
z_p+1+2(p-\ell)+1+2(p-\ell-2)+\ldots+1+2(p-5)-z_p+2(p-3)+(p-1)=
\]
\[
m-1+2pm-2m(m+2)+(p-1) = p(2m+1)+m(-2m-3)-1< p\ell\leq p^2.
\]
which proves the claim with a strict inequality.

\subsection*{33}

$\N(\GL(p))\subset\SO(2p+1)$, $p\geq2$:
$\Sigma = \{ \gamma_i = \alpha_i+\alpha_{i+1} \ :\ 1\leq i\leq p-1\}\cup \{ \gamma_p = 2\alpha_p\}$, 
$\Delta=\{D_i\ :\ 1\leq i\leq p\}$, $\rho(D_i)=\alpha^\vee_i$ $\forall\ i\leq p-1$  and $\rho(D_p)=\frac12\alpha_p^\vee$, $S^p=\emptyset$, $\Delta'=\Delta\setminus\{D_1\}$, $\Sigma'=\{\gamma_{2k}\ :\ 1\leq k\leq p/2\}$.

\[\begin{picture}(31800,5700)
\put(0,1950){\diagrambcprimen}
\put(12000,3450){\vector(3,1){3000}}
\put(12000,2250){\vector(3,-1){3000}}
\put(15900,4800){
\multiput(0,0)(1800,0){2}{\usebox{\edge}}\put(3600,0){\usebox{\susp}}\put(7200,0){\usebox{\dynkinbthree}}\put(0,0){\usebox{\gcircle}}\put(11400,-150){if $p$ is even}
}
\put(15900,900){
\multiput(0,0)(1800,0){2}{\usebox{\edge}}\put(3600,0){\usebox{\susp}}\put(7200,0){\usebox{\dynkinbthree}}\put(0,0){\usebox{\gcircletwo}}\put(11400,-150){if $p$ is odd}
}
\end{picture}\]

Here $|R^+|=p^2$ as in the previous case. Set $x_i=x_{\gamma_i}$ for $1\leq i\leq p$. 

With any $p$, if $\|\Gamma\|=\{\gamma_1\}$ then we are computing the sup of the constant function $p-1$ hence $\mathscr P=p-1<p^2$. If instead $\|\Gamma\|=\{\gamma_\ell\}$ with $\ell$ odd and $3\leq \ell \leq p$, we are computing the sup of the function $(p-1)+(x_1-x_\ell)$.

Set $z_k=x_k-x_{k+2}$, then the function is
\[
(p-1)+(z_1+z_3+\ldots+z_{\ell-2})
\]

Some of the inequalities of the system are
\[
z_1 \leq z_{2}+2, \; \ldots, \; z_{p-4}\leq z_{p-3}+2
\]
and
\[
z_{p-3}\leq z_{p-2}-x_p+2\leq 3.
\]
whence
\[
z_i \leq z_{p-3}+2(p-3-i)
\]
if $i\leq p-4$. Suppose $\ell\leq p-1$ and set $\ell=2m+1$, we apply the above inequalities to our function obtaining
\[
(p-1)+(z_1+z_3+\ldots+z_{\ell-2}) \leq (p-1)+3m+\sum_{j=1}^m(2(p-3-2j+1))=
\]
\[
=p\ell - m - 2m(m-1)-1<p\ell<p^2.
\]
If instead we have $\ell=p$, then the function is
\[
(p-1)+(z_1+z_3+\ldots+z_{p-4})+z_{p-2}<\ldots
\]
We may apply the above estimate to the sum of the two terms in parentheses, using $\ell'=p-2$ instead of $\ell$. This, together with $x_\ell=x_p\leq 1$ which is the last inequality of the system, yields
\[
\ldots < p(p-2)+z_{p-2}\leq p(p-2)+2<p^2
\]
proving the claim with a strict inequality.

\subsection*{34}

 $\mathrm{Spin}(7)\subset\SO(9)$:
$\Sigma=\{\gamma_1=\alpha_1+\alpha_2+\alpha_3+\alpha_4,\ \gamma_2=\alpha_2+2\alpha_3+3\alpha_4\}$, $\Delta=\{D_1,D_4\}$, $\rho(D_i)=\alpha^\vee_i$ $\forall\ i\in\{1,4\}$, $S^p=\{\alpha_2,\alpha_3\}$, $\Delta'=\{D_4\}$, $\Sigma'=\{\gamma_2\}$.

\[\begin{picture}(16800,1800)(0,-900)\put(300,0){\put(0,0){\usebox{\dynkinbfour}}\multiput(0,0)(5400,0){2}{\usebox{\gcircle}}}\put(6900,0){\vector(1,0){3000}}\put(11100,0){\put(0,0){\usebox{\dynkinbfour}}\put(0,0){\usebox{\gcircle}}}\end{picture}\]

If $\|\Gamma\|=\{\gamma_1\}$ then $\mathscr P=13=|R^+\setminus R_{S^p}^+|$. The maximum of $8+x_{\gamma_2}$ is achieved in 
\[\theta=\gamma_1+5\gamma_2.\]

\subsection*{35}

$\mathsf G_2\subset \SO(7)$:
$\Sigma=\{\gamma = \alpha_1+2\alpha_2+3\alpha_3\}$, $\Delta=\{D_3\}$, $\rho(D_3)=\alpha^\vee_3$, $S^p=\{\alpha_1,\alpha_2\}$.

\[\begin{picture}(3900,1800)\put(0,900){\usebox{\bthirdthree}}\end{picture}\]

For $\|\Gamma\|=\{\gamma\}$ we have $\mathscr P=6=|R^+\setminus R_{S^p}^+|$, maximum of $5+x_{\gamma}$ achieved in 
\[\theta=\gamma.\]

\subsection*{36}

$\GL(1)\times\Sp(2p)\subset\Sp(2p+2)$, $p\geq2$:
$\Sigma=\{\gamma_1=\alpha_1,\ \gamma_2=\alpha_1+2\alpha_2+\ldots+2\alpha_p+\alpha_{p+1}\}$, $\Delta=\{D_1^+,D_1^-,D_2\}$, $\rho(D_1^+)=\rho(D_1^-)$, $\rho(D_1^+)+\rho(D_1^-)=\alpha_1^\vee$ and $\rho(D_2)=\alpha_2^\vee$, $S^p=\{\alpha_3,\ldots,\alpha_{p+1}\}$, $\Delta'=\{D_1^+,D_1^-\}$, $\Sigma'=\{\gamma_1\}$.

\[\begin{picture}(24000,1800)(0,-900)\put(300,0){\put(0,0){\usebox{\aone}}\put(0,0){\usebox{\shortcm}}}\put(10500,0){\vector(1,0){3000}}\put(14700,0){\put(0,0){\usebox{\shortcm}}}\end{picture}\]

If $\|\Gamma\|=\{\gamma_2\}$ then $\mathscr P=4p=|R^+\setminus R_{S^p}^+|$. The maximum of $2p-1+x_{\gamma_1}$ is achieved in 
\[\theta=(2p+1)\gamma_1+\gamma_2.\]

\subsection*{37}

$\N(\GL(1)\times\Sp(2p))\subset\Sp(2p+2)$, $p\geq2$:
$\Sigma=\{\gamma_1=2\alpha_1,\ \gamma_2=\alpha_1+2\alpha_2+\ldots+2\alpha_p+\alpha_{p+1}\}$, $\Delta=\{D_1,D_2\}$, $\rho(D_1)=\frac12\alpha_1^\vee$ and $\rho(D_2)=\alpha_2^\vee$, $S^p=\{\alpha_3,\ldots,\alpha_{p+1}\}$, $\Delta'=\{D_1\}$, $\Sigma'=\{\gamma_1\}$.

\[\begin{picture}(24000,1800)(0,-900)\put(300,0){\put(0,0){\usebox{\aprime}}\put(0,0){\usebox{\shortcm}}}\put(10500,0){\vector(1,0){3000}}\put(14700,0){\put(0,0){\usebox{\shortcm}}}\end{picture}\]

If $\|\Gamma\|=\{\gamma_2\}$ then $\mathscr P=2p<|R^+\setminus R_{S^p}^+|$.

\subsection*{38}

$\mathsf G_2\cdot \mathrm Z_{\SO(8)}\subset\SO(8)$:
$\Sigma=\{\gamma_1=\alpha_1+\alpha_2+\alpha_3,\ \gamma_2=\alpha_1+\alpha_2+\alpha_4,\ \gamma_3=\alpha_2+\alpha_3+\alpha_4\}$, $\Delta=\{D_1,D_3,D_4\}$, $\rho(D_i)=\alpha_i^\vee$ $\forall i\in\{1,3,4\}$, $S^p=\{\alpha_2\}$. As distinguished subset of colors one can take $\Delta'=\{D_3,D_4\}$ getting  $\Sigma'=\{\gamma_3\}$ or, by symmetry, any subset $\Delta'$ of cardinality 2, hence getting any subset $\Sigma'$ of cardinality 1.

\[\begin{picture}(11400,3000)
\put(0,0){\diagramdsastfour}
\put(4200,1500){\vector(1,0){3000}}
\put(8100,1500){
\put(0,0){\usebox{\dynkindfour}}\put(0,0){\usebox{\gcircle}}
}
\end{picture}\]

For $|\Gamma|=2$ we have $\mathscr P=11= |R^+\setminus R_{S^p}^+|$. If $\|\Gamma\|=\{\gamma_1,\gamma_2\}$, the maximun of $6+x_{\gamma_3}$ is achieved in 
\[\theta=\gamma_1+\gamma_2+5\gamma_3.\]

\subsection*{39}

$\SO(2)\times\mathrm{Spin}(7)\subset\SO(10)$:
$\Sigma=\{\gamma_1=\alpha_1,\ \gamma_2=\alpha_2+\alpha_3+\alpha_4,\ \gamma_3=\alpha_2+\alpha_3+\alpha_5,\ \gamma_4=\alpha_3+\alpha_4+\alpha_5\}$, $\Delta=\{D_1^+,D_1^-,D_2,D_4,D_5\}$, $\rho(D_1^+)=1,-1,0,0$ and $\rho(D_1^-)=1,0,-1,0$ respectively on $\gamma_1,\gamma_2,\gamma_3,\gamma_4$, $\rho(D_i)=\alpha_i^\vee$ $\forall i\in\{2,4,5\}$, $S^p=\{\alpha_3\}$, $\Delta'=\{D_4,D_5\}$, $\Sigma'=\{\gamma_4\}$.

\[\begin{picture}(15000,3000)
\put(300,1500){
\put(0,0){\usebox{\edge}}
\put(0,0){\usebox{\aone}}
\put(0,600){\usebox{\toe}}
\put(1500,-1500){\diagramdsastfour}
}
\put(6000,1500){\vector(1,0){3000}}
\put(9900,1500){
\put(0,0){\usebox{\dynkindfive}}\put(0,0){\usebox{\aone}}\put(0,600){\usebox{\toe}}\put(1800,0){\usebox{\gcircle}}
}
\end{picture}\]

Here $|R^+\setminus R_{S^p}^+|=19$. 
If $\|\Gamma\|=\{\gamma_1\}$ then $\mathscr P=12$.
If $\|\Gamma\|=\{\gamma_2\}$ or $\|\Gamma\|=\{\gamma_3\}$ then $\mathscr P=18$.

\subsection*{41} 

$\mathsf A_2\subset\mathsf G_2$:
$\Sigma=\{\gamma = 4\alpha_1+2\alpha_2\}$, $\Delta=\{D_1\}$, $\rho(D_1)=\alpha^\vee_1$, $S^p=\{\alpha_2\}$.

\[\begin{picture}(2100,1800)(-300,-900)\put(0,0){\usebox{\gprimetwo}}\end{picture}\]

For $\|\Gamma\|=\{\gamma\}$ we have $\mathscr P=5=|R^+\setminus R_{S^p}^+|$, maximum of $4+x_{\gamma}$ achieved in 
\[\theta=\gamma.\]

\subsection*{42} 

$\Sp(2)\times\Sp(2p)\times\Sp(2q)\subset\Sp(2p+2)\times\Sp(2q+2)$, $p\geq0,q\geq1$, 
where $\Sp(2)$ is diagonal in $\Sp(2p+2)\times\Sp(2q+2)$.

\paragraph{$\mathbf{(p=0)}$:}
$\Sigma=\{\gamma_1=\alpha_1+\alpha'_1,\ \gamma_2=\alpha'_1+2\alpha'_2+\ldots+2\alpha'_q+\alpha'_{q+1}\}$, $\Delta=\{D'_1,D'_2\}$, $\rho(D'_i)=(\alpha'_i)^\vee$ $\forall i\in\{1,2\}$, $S^p=\{\alpha'_3,\ldots,\alpha'_{q+1}\}$, $\Delta'=\{D'_1\}$, $\Sigma'=\{\gamma_1\}$.

\[\begin{picture}(29400,1800)(-300,-900)
\put(0,0){\usebox{\vertex}}
\put(0,0){\usebox{\wcircle}}
\put(2700,0){\usebox{\shortcsecondm}}
\multiput(0,-300)(2700,0){2}{\line(0,-1){600}}
\put(0,-900){\line(1,0){2700}}
\put(13200,0){\vector(1,0){3000}}
\put(17400,0){
\put(0,0){\usebox{\vertex}}
\put(2700,0){\usebox{\shortcm}}
}
\end{picture}\]

If $\|\Gamma\|=\{\gamma_2\}$ then $\mathscr P=4q+1=|R^+\setminus R_{S^p}^+|$. The maximun of $2q+x_{\gamma_1}$ is achieved in 
\[\theta=(2q+1)\gamma_1+\gamma_2.\]

\paragraph{$\mathbf{(p\geq1)}$:}
$\Sigma=\{\gamma_1=\alpha_1+\alpha'_1,\ \gamma_2=\alpha_1+2\alpha_2+\ldots+2\alpha_p+\alpha_{p+1},\ \gamma_3=\alpha'_1+2\alpha'_2+\ldots+2\alpha'_q+\alpha'_{q+1}\}$, $\Delta=\{D_1,D_2,D'_2\}$, $\rho(D_i)=\alpha_i^\vee$ $\forall i\in\{1,2\}$ and $\rho(D'_2)=(\alpha'_2)^\vee$, $S^p=\{\alpha_3,\ldots,\alpha_{p+1},\ \alpha'_3,\ldots,\alpha'_{q+1}\}$. As distinguished subset of colors one can take $\Delta'=\{D_1,D_2\}$ getting  $\Sigma'=\{\gamma_1,\gamma_2\}$ or, by symmetry, $\Delta'=\{D_1,D'_2\}$ getting $\Sigma'=\{\gamma_1,\gamma_3\}$.

\[\begin{picture}(17400,7500)(-300,-6600)
\multiput(0,0)(9900,0){2}{
\put(0,0){\usebox{\edge}}
\put(1800,0){\usebox{\susp}}
\put(5400,0){\usebox{\leftbiedge}}
\put(1800,0){\usebox{\gcircle}}
}
\multiput(0,0)(9900,0){2}{\usebox{\wcircle}}
\multiput(0,-300)(9900,0){2}{\line(0,-1){600}}
\put(0,-900){\line(1,0){9900}}
\put(8700,-2100){\vector(0,-1){2100}}
\put(0,-5700){
\multiput(0,0)(9900,0){2}{
\put(0,0){\usebox{\edge}}
\put(1800,0){\usebox{\susp}}
\put(5400,0){\usebox{\leftbiedge}}
\put(1800,0){\usebox{\gcircle}}
}
}
\end{picture}\]

If $\|\Gamma\|=\{\gamma_2,\gamma_3\}$ then $\mathscr P=2(p+q-1)<|R^+\setminus R_{S^p}^+|$.

\subsection*{43}  

$(\Sp(2)\times\Sp(2p)\times\Sp(2q)\times\Sp(2r))\cdot\mathrm Z\subset\Sp(2p+2)\times\Sp(2q+2)\times\Sp(2r+2)$, 
$p,q,r\geq 0$, where $\Sp(2)$ is diagonal in $\Sp(2p+2)\times\Sp(2q+2)\times\Sp(2r+2)$.

\paragraph{$\mathbf{(p=q=r=0)}$:}
$\Sigma=\{\alpha_1,\alpha'_1,\alpha''_1\}$, $\Delta=\{D,D',D''\}$, $\rho(D)=1,1,-1$, $\rho(D')=1,-1,1$ and $\rho(D'')=-1,1,1$ respectively on $\alpha_1,\alpha'_1,\alpha''_1$, $S^p=\emptyset$. As distinguished subset of colors one can take $\Delta'=\{D',D''\}$ getting  $\Sigma'=\{\alpha''_1\}$ or, by symmetry, any subset $\Delta'$ of cardinality 2, hence getting any subset $\Sigma'$ of cardinality 1.
  
\[\begin{picture}(17400,2700)(-300,-1350)
\multiput(0,0)(2700,0){3}{\usebox{\aone}}
\multiput(0,1350)(2700,0){2}{\line(0,-1){450}}
\put(0,1350){\line(1,0){2700}}
\multiput(0,-1350)(5400,0){2}{\line(0,1){450}}
\put(0,-1350){\line(1,0){5400}}
\multiput(3000,-600)(1050,1200){2}{\line(1,0){1050}}
\put(4050,-600){\line(0,1){1200}}
\put(7200,0){\vector(1,0){3000}}
\put(11400,0){
\multiput(0,0)(2700,0){3}{\usebox{\vertex}}
\multiput(0,0)(2700,0){2}{\usebox{\wcircle}}
\multiput(0,-750)(2700,0){2}{\line(0,1){450}}
\put(0,-750){\line(1,0){2700}}
}
\end{picture}\]

For $|\Gamma|=2$ we have $\mathscr P=3= |R^+|$. If $\|\Gamma\|=\{\alpha_1,\alpha'_1\}$, the maximun of $x_{\alpha''_1}$ is achieved in 
\[\theta=\alpha_1+\alpha'_1+3\alpha''_1.\]

\paragraph{$\mathbf{(p\neq0,\ q=r=0)}$:}
$\Sigma=\{\gamma_1=\alpha_1,\ \gamma_2=\alpha'_1,\ \gamma_3=\alpha''_1,\ \gamma_4=\alpha''_1+2\alpha''_2+\ldots+2\alpha''_p+\alpha''_{p+1}\}$, $\Delta=\{D,D',D'',D''_2\}$, $\rho(D)=1,1,-1,0$, $\rho(D')=1,-1,1,0$, $\rho(D'')=-1,1,1,0$ respectively on $\gamma_1,\gamma_2,\gamma_3,\gamma_4$ and $\rho(D''_2)=(\alpha''_2)^\vee$, $S^p=\{\alpha''_3,\ldots,\alpha''_{p+1}\}$. As distinguished subset of colors one can take $\Delta'=\{D',D'',D''_2\}$ or $\Delta'=\{D,D',D''\}$ getting, respectively, $\Sigma'=\{\gamma_3,\gamma_4\}$ and $\Sigma'=\{\gamma_1,\gamma_2,\gamma_3\}$.
  
\[\begin{picture}(31200,2700)(-300,-1350)
\multiput(0,0)(2700,0){3}{\usebox{\aone}}
\multiput(0,1350)(2700,0){2}{\line(0,-1){450}}
\put(0,1350){\line(1,0){2700}}
\multiput(0,-1350)(5400,0){2}{\line(0,1){450}}
\put(0,-1350){\line(1,0){5400}}
\multiput(3000,-600)(1050,1200){2}{\line(1,0){1050}}
\put(4050,-600){\line(0,1){1200}}
\put(5400,0){\usebox{\shortshortcm}}
\put(14100,0){\vector(1,0){3000}}
\put(18300,0){
\multiput(0,0)(2700,0){3}{\usebox{\vertex}}
\multiput(0,0)(2700,0){2}{\usebox{\wcircle}}
\multiput(0,-750)(2700,0){2}{\line(0,1){450}}
\put(0,-750){\line(1,0){2700}}
\put(5400,0){\usebox{\shortshortcm}}
}
\end{picture}\]

If $\|\Gamma\|$ is equal to $\{\gamma_1,\ \gamma_4\}$ or equal to $\{\gamma_2,\ \gamma_4\}$ then $\mathscr P=4p+2=|R^+\setminus R_{S^p}^+|$. If $\|\Gamma\|=\{\gamma_1,\ \gamma_4\}$, the maximum of $2p-1+x_{\gamma_2}$ is achieved in
\[\theta=\gamma_1+(2p+3)\gamma_2+(2p+1)\gamma_3+\gamma_4.\]

\paragraph{$\mathbf{(p,q\neq0,\ r=0)}$:}
$\Sigma=\{\gamma_1=\alpha_1,\ \gamma_2=\alpha'_1,\ \gamma_3=\alpha'_1+2\alpha'_2+\ldots+2\alpha'_p+\alpha'_{p+1},\ \gamma_4=\alpha''_1,\ \gamma_5=\alpha''_1+2\alpha''_2+\ldots+2\alpha''_q+\alpha''_{q+1}\}$, $\Delta=\{D,D',D'_2,D'',D''_2\}$, $\rho(D)=1,1,0,-1,0$, $\rho(D')=1,-1,0,1,0$, $\rho(D'')=-1,1,0,1,0$ respectively on $\gamma_1,\ldots,\gamma_5$, $\rho(D'_2)=(\alpha'_2)^\vee$ and $\rho(D''_2)=(\alpha''_2)^\vee$, $S^p=\{\alpha'_3,\ldots,\alpha'_{p+1},\;\alpha''_3,\ldots,\alpha''_{q+1}\}$. As distinguished subset of colors one can take $\Delta'=\Delta\setminus\{D'_2\}$ or $\Delta'=\Delta\setminus\{D''_2\}$ getting, respectively, $\Sigma'=\Sigma\setminus\{\gamma_3\}$ and $\Sigma'=\Sigma\setminus\{\gamma_5\}$.
  
\[\begin{picture}(20100,9300)(-300,-7950)
\multiput(0,0)(2700,0){2}{\usebox{\aone}}
\put(12600,0){\usebox{\aone}}
\multiput(0,1350)(2700,0){2}{\line(0,-1){450}}
\put(0,1350){\line(1,0){2700}}
\multiput(0,-1350)(12600,0){2}{\line(0,1){450}}
\put(0,-1350){\line(1,0){12600}}
\put(3000,-600){\line(1,0){8250}}
\put(11250,600){\line(1,0){1050}}
\put(11250,-600){\line(0,1){1200}}
\multiput(2700,0)(9900,0){2}{
\put(0,0){\usebox{\edge}}
\put(1800,0){\usebox{\susp}}
\put(5400,0){\usebox{\leftbiedge}}
\put(1800,0){\usebox{\gcircle}}
}
\put(10500,-2550){\vector(0,-1){3000}}
\put(0,-7050){
\put(0,0){\usebox{\vertex}}
\multiput(2700,0)(9900,0){2}{
\put(0,0){\usebox{\edge}}
\put(1800,0){\usebox{\susp}}
\put(5400,0){\usebox{\leftbiedge}}
\put(1800,0){\usebox{\gcircle}}
}
}
\end{picture}\]

If $\|\Gamma\|=\{\gamma_3,\gamma_5\}$ then $\mathscr P=4p+4q+1=|R^+\setminus R_{S^p}^+|$. The maximum of $2p+2q-2-1+x_{\gamma_1}$ is achieved in
\[\theta=(2p+2q+3)\gamma_1+(2p+1)\gamma_2+\gamma_3+(2q+1)\gamma_4+\gamma_5.\]

\paragraph{$\mathbf{(p,q,r\neq0)}$:}
$\Sigma=\{\gamma_1=\alpha_1,\ \gamma_2=\alpha_1+2\alpha_2+\ldots+2\alpha_p+\alpha_{p+1},\ \gamma_3=\alpha'_1,\ \gamma_4=\alpha'_1+2\alpha'_2+\ldots+2\alpha'_q+\alpha'_{q+1},\ \gamma_5=\alpha''_1,\ \gamma_6=\alpha''_1+2\alpha''_2+\ldots+2\alpha''_r+\alpha''_{r+1}\}$, $\Delta=\{D,D_2,D',D'_2,D'',D''_2\}$, $\rho(D)=1,0,1,0,-1,0$, $\rho(D')=1,0,-1,0,1,0$, $\rho(D'')=-1,0,1,0,1,0$ respectively on $\gamma_1,\ldots,\gamma_6$, $\rho(D_2)=(\alpha_2)^\vee$, $\rho(D'_2)=(\alpha'_2)^\vee$ and $\rho(D''_2)=(\alpha''_2)^\vee$, $S^p=\{\alpha_3,\ldots,\alpha_{p+1},\;\alpha'_3,\ldots,\alpha'_{q+1},\;\alpha''_3,\ldots,\alpha''_{r+1}\}$. As distinguished subset of colors one can take either $\Delta'=\Delta\setminus\{D_2\}$ or $\Delta'=\Delta\setminus\{D'_2\}$ or $\Delta'=\Delta\setminus\{D''_2\}$ getting, respectively, $\Sigma'=\Sigma\setminus\{\gamma_2\}$, $\Sigma'=\Sigma\setminus\{\gamma_4\}$ and $\Sigma'=\Sigma\setminus\{\gamma_6\}$.

\[\begin{picture}(27300,9300)(-300,-7950)
\multiput(0,0)(9900,0){3}{\usebox{\aone}}
\multiput(0,1350)(9900,0){2}{\line(0,-1){450}}
\put(0,1350){\line(1,0){9900}}
\multiput(0,-1350)(19800,0){2}{\line(0,1){450}}
\put(0,-1350){\line(1,0){19800}}
\put(10200,-600){\line(1,0){8250}}
\put(18450,600){\line(1,0){1050}}
\put(18450,-600){\line(0,1){1200}}
\multiput(0,0)(9900,0){3}{
\put(0,0){\usebox{\edge}}
\put(1800,0){\usebox{\susp}}
\put(5400,0){\usebox{\leftbiedge}}
\put(1800,0){\usebox{\gcircle}}
}
\put(13650,-2550){\vector(0,-1){3000}}
\put(0,-7050){
\multiput(0,0)(9900,0){3}{
\put(0,0){\usebox{\edge}}
\put(1800,0){\usebox{\susp}}
\put(5400,0){\usebox{\leftbiedge}}
\put(1800,0){\usebox{\gcircle}}
}
}\end{picture}\]

If $\|\Gamma\|=\{\gamma_2,\gamma_4,\gamma_6\}$ then $\mathscr P=2(p+q+r)-3<|R^+\setminus R_{S^p}^+|$.
 
\bigskip

\subsection*{44} 

$\GL(1)\times\SL(2)\times\SL(p)\subset\SL(p+2)\times\SL(2)$, $p\geq2$, where $\SL(2)$ is diagonal in $\SL(p+2)\times\SL(2)$.

\paragraph{$\mathbf{(p=2)}$:}
$\Sigma=\{\alpha_1, \alpha_2, \alpha_3, \alpha'_1\}$, $\Delta=\{D^+,D_2^+,D_2^-,D',D''\}$, $\rho(D^+)=1,-1,1,-1$, $\rho(D_2^+)=-1,1,0,0$, $\rho(D_2^-)=0,1,-1,0$, $\rho(D')=1,0,-1,1$ and $\rho(D'')=-1,0,1,1$ respectively on $\alpha_1,\alpha_2,\alpha_3,\alpha'_1$, $S^p=\emptyset$, $\Delta'=\{D',D''\}$, $\Sigma'=\{\alpha'_1\}$.
 
\[\begin{picture}(19200,2700)(-300,-1350)
\multiput(0,0)(3600,0){2}{\usebox{\aone}}
\multiput(0,0)(1800,0){2}{\usebox{\edge}}
\put(1800,0){\usebox{\aone}}
\put(6300,0){\usebox{\aone}}
\multiput(0,1350)(3600,0){2}{\line(0,-1){450}}
\put(0,1350){\line(1,0){3600}}
\multiput(0,-1350)(6300,0){2}{\line(0,1){450}}
\put(0,-1350){\line(1,0){6300}}
\multiput(3900,-600)(1050,1200){2}{\line(1,0){1050}}
\put(4950,-600){\line(0,1){1200}}
\put(0,600){\usebox{\toe}}
\put(3600,600){\usebox{\tow}}
\put(1800,600){\usebox{\tow}}
\put(7800,0){\vector(1,0){3000}}
\put(12300,0){
\multiput(0,0)(3600,0){2}{\usebox{\wcircle}}
\multiput(0,0)(1800,0){2}{\usebox{\edge}}
\put(1800,0){\usebox{\aone}}
\put(6300,0){\usebox{\vertex}}
\multiput(0,-300)(3600,0){2}{\line(0,-1){1050}}
\put(0,-1350){\line(1,0){3600}}
\put(1800,600){\usebox{\tow}}
}
\end{picture}\]

Here $|R^+\setminus R_{S^p}^+|=7$. 
If $\|\Gamma\|=\{\alpha_1\}$ or $\|\Gamma\|=\{\alpha_3\}$ then $\mathscr P=6$.
If $\|\Gamma\|=\{\alpha_2\}$  then $\mathscr P=4$.

\paragraph{$\mathbf{(p\geq3)}$:}
$\Sigma=\{\gamma_1=\alpha_1,\ \gamma_2=\alpha_2+\ldots+\alpha_p,\ \gamma_3=\alpha_{p+1},\ \gamma_4=\alpha'_1\}$, $\Delta=\{D^+,D_2,D_p,D',D''\}$, $\rho(D^+)=1,-1,1,-1$, $\rho(D_2)=-1,1,0,0$, $\rho(D_p)=0,1,-1,0$, $\rho(D')=1,0,-1,1$ and $\rho(D'')=-1,0,1,1$ respectively on $\gamma_1,\gamma_2,\gamma_3,\gamma_4$ (as in the previous case), $S^p=\{\alpha_3,\ldots,\alpha_{p-1}\}$, $\Delta'=\{D',D''\}$, $\Sigma'=\{\gamma_4\}$.
 
\[\begin{picture}(26400,2700)(-300,-1350)
\multiput(0,0)(7200,0){2}{\usebox{\aone}}
\multiput(0,0)(5400,0){2}{\usebox{\edge}}
\put(1800,0){\usebox{\shortam}}
\put(9900,0){\usebox{\aone}}
\multiput(0,1350)(7200,0){2}{\line(0,-1){450}}
\put(0,1350){\line(1,0){7200}}
\multiput(0,-1350)(9900,0){2}{\line(0,1){450}}
\put(0,-1350){\line(1,0){9900}}
\multiput(7500,-600)(1050,1200){2}{\line(1,0){1050}}
\put(8550,-600){\line(0,1){1200}}
\put(0,600){\usebox{\toe}}
\put(7200,600){\usebox{\tow}}
\put(11400,0){\vector(1,0){3000}}
\put(15900,0){
\multiput(0,0)(7200,0){2}{\usebox{\wcircle}}
\multiput(0,0)(5400,0){2}{\usebox{\edge}}
\put(1800,0){\usebox{\shortam}}
\put(9900,0){\usebox{\vertex}}
\multiput(0,-300)(7200,0){2}{\line(0,-1){450}}
\put(0,-750){\line(1,0){7200}}
}
\end{picture}\]

Here $|R^+\setminus R_{S^p}^+|=4p-1$. 
If $\|\Gamma\|=\{\gamma_1\}$ or $\|\Gamma\|=\{\gamma_3\}$ then $\mathscr P=3p$.
If $\|\Gamma\|=\{\gamma_2\}$  then $\mathscr P=4(p-1)$.

\subsection*{45} 

$\GL(1)\times\SL(2)\times\SL(p)\times\Sp(2q)\subset\SL(p+2)\times\Sp(2q+2)$, $p\geq1$, $q\geq1$, where $\SL(2)$ is diagonal in $\SL(p+2)\times\Sp(2q+2)$.

\paragraph{$\mathbf{(p=1)}$:}
$\Sigma=\{\gamma_1=\alpha_1,\ \gamma_2=\alpha_2,\ \gamma_3=\alpha'_1,\ \gamma_4=\alpha'_1+2\alpha'_2+\ldots+2\alpha'_q+\alpha'_{q+1}\}$, $\Delta=\{D_1^+, D_1^-, D_2^+, D_2^-, D'\}$, $\rho(D_1^+)=1,-1,1,0$, $\rho(D_1^-)=1,0,-1,0$, $\rho(D_2^+)=0,1,-1,0$ and $\rho(D_2^-)=-1,1,1,0$ respectively on $\gamma_1,\gamma_2,\gamma_3,\gamma_4$, $\rho(D')=(\alpha'_2)^\vee$, $S^p=\{\alpha'_3,\ldots,\alpha'_{q+1}\}$. As distinguished subset of colors one can take $\Delta'=\Delta\setminus\{D_1^-,D_2^+\}$ or $\Delta'=\Delta\setminus\{D'\}$ obtaining, respectively, $\Sigma'=\{\gamma_3,\gamma_4\}$ and $\Sigma'=\{\gamma_1,\gamma_2,\gamma_3\}$.

\[\begin{picture}(29400,2700)(-300,-1350)
\put(0,0){\usebox{\edge}}
\multiput(0,0)(1800,0){2}{\usebox{\aone}}
\put(4500,0){\usebox{\aone}}
\put(4500,0){\usebox{\shortshortcm}}
\multiput(0,1350)(4500,0){2}{\line(0,-1){450}}
\put(0,1350){\line(1,0){4500}}
\multiput(1800,-1350)(2700,0){2}{\line(0,1){450}}
\put(1800,-1350){\line(1,0){2700}}
\put(0,600){\usebox{\toe}}
\put(12900,0){\vector(1,0){3000}}
\put(17400,0){
\put(0,0){\usebox{\atwo}}
\put(4500,0){\usebox{\shortshortcm}}
}
\end{picture}\]

If $\|\Gamma\|$ is equal to $\{\gamma_1,\gamma_4\}$ or equal to $\{\gamma_2,\gamma_4\}$ then $\mathscr P=2q+1<|R^+\setminus R_{S^p}^+|$.

\paragraph{$\mathbf{(p=2)}$:}
$\Sigma=\{\gamma_1=\alpha_1,\ \gamma_2=\alpha_2,\ \gamma_3=alpha_3,\ \gamma_4=\alpha'_1,\ \gamma_5=\alpha'_1+2\alpha'_2+\ldots+2\alpha'_q+\alpha'_{q+1}\}$, $\Delta=\{D_1^+, D_1^-, D_2^+, D_2^-,D_3^-, D'\}$, $\rho(D_1^+)=1,-1,1,-1,0$, $\rho(D_1^-)=1,0,-1,1,0$, $\rho(D_2^+)=-1,1,0,0,0$, $\rho(D_2^-)=0,1,-1,0,0$ and $\rho(D_3^-)=-1,0,1,1,0$ respectively on $\gamma_1,\ldots,\gamma_5$, $\rho(D')=(\alpha'_2)^\vee$, $S^p=\{\alpha'_3,\ldots,\alpha'_{q+1}\}$. As distinguished subset of colors one can take $\Delta'=\{D_1^-,D_3^-,D'\}$ or $\Delta'=\Delta\setminus\{D'\}$ obtaining, respectively, $\Sigma'=\{\gamma_4,\gamma_5\}$ and $\Sigma'=\Sigma\setminus\{\gamma_5\}$.

\[\begin{picture}(33000,2700)(-300,-1350)
\multiput(0,0)(3600,0){2}{\usebox{\aone}}
\multiput(0,0)(1800,0){2}{\usebox{\edge}}
\put(1800,0){\usebox{\aone}}
\put(6300,0){\usebox{\aone}}
\multiput(0,1350)(3600,0){2}{\line(0,-1){450}}
\put(0,1350){\line(1,0){3600}}
\multiput(0,-1350)(6300,0){2}{\line(0,1){450}}
\put(0,-1350){\line(1,0){6300}}
\multiput(3900,-600)(1050,1200){2}{\line(1,0){1050}}
\put(4950,-600){\line(0,1){1200}}
\put(0,600){\usebox{\toe}}
\put(3600,600){\usebox{\tow}}
\put(1800,600){\usebox{\tow}}
\put(6300,0){\usebox{\shortshortcm}}
\put(14700,0){\vector(1,0){3000}}
\put(19200,0){
\multiput(0,0)(3600,0){2}{\usebox{\wcircle}}
\multiput(0,0)(1800,0){2}{\usebox{\edge}}
\put(1800,0){\usebox{\aone}}
\multiput(0,-300)(3600,0){2}{\line(0,-1){1050}}
\put(0,-1350){\line(1,0){3600}}
\put(1800,600){\usebox{\tow}}
\put(6300,0){\usebox{\shortshortcm}}
}
\end{picture}\]

Here $|R^+\setminus R_{S^p}^+|=6+4q$. If $\|\Gamma\|$ is equal to $\{\gamma_1,\gamma_5\}$ or equal to $\{\gamma_3,\gamma_5\}$ then $\mathscr P=2q+2$.
If $\|\Gamma\|=\{\gamma_2,\gamma_5\}$ then $\mathscr P=2q-1$.

\paragraph{$\mathbf{(p\geq3)}$:}
$\Sigma=\{\gamma_1=\alpha_1,\ \gamma_2=\alpha_2+\ldots+\alpha_p,\ \gamma_3=alpha_{p+1},\ \gamma_4=\alpha'_1,\ \gamma_5=\alpha'_1+2\alpha'_2+\ldots+2\alpha'_q+\alpha'_{q+1}\}$, $\Delta=\{D_1^+, D_1^-, D_2, D_p,D_{p+1}^-, D'\}$, $\rho(D_1^+)=1,-1,1,-1,0$, $\rho(D_1^-)=1,0,-1,1,0$, $\rho(D_2)=-1,1,0,0,0$, $\rho(D_p)=0,1,-1,0,0$ and $\rho(D_{p+1}^-)=-1,0,1,1,0$ respectively on $\gamma_1,\ldots,\gamma_5$ (as in the previous case), $\rho(D')=(\alpha'_2)^\vee$, $S^p=\{\alpha_3,\ldots,\alpha_{p-1},\;\alpha'_3,\ldots,\alpha'_{q+1}\}$. As distinguished subset of colors one can take $\Delta'=\{D_1^-,D_{p+1}^-,D'\}$ or $\Delta'=\Delta\setminus\{D'\}$ obtaining, respectively, $\Sigma'=\{\gamma_4,\gamma_5\}$ and $\Sigma'=\Sigma\setminus\{\gamma_5\}$.

\[\begin{picture}(17400,9600)(-300,-8250)
\multiput(0,0)(7200,0){2}{\usebox{\aone}}
\multiput(0,0)(5400,0){2}{\usebox{\edge}}
\put(1800,0){\usebox{\shortam}}
\put(9900,0){\usebox{\aone}}
\multiput(0,1350)(7200,0){2}{\line(0,-1){450}}
\put(0,1350){\line(1,0){7200}}
\multiput(0,-1350)(9900,0){2}{\line(0,1){450}}
\put(0,-1350){\line(1,0){9900}}
\multiput(7500,-600)(1050,1200){2}{\line(1,0){1050}}
\put(8550,-600){\line(0,1){1200}}
\put(0,600){\usebox{\toe}}
\put(7200,600){\usebox{\tow}}
\put(9900,0){\usebox{\shortshortcm}}
\put(8700,-2550){\vector(0,-1){3000}}
\put(0,-6750){
\multiput(0,0)(7200,0){2}{\usebox{\wcircle}}
\multiput(0,0)(5400,0){2}{\usebox{\edge}}
\put(1800,0){\usebox{\shortam}}
\multiput(0,-300)(7200,0){2}{\line(0,-1){450}}
\put(0,-750){\line(1,0){7200}}
\put(9900,0){\usebox{\shortshortcm}}
}
\end{picture}\]

Here $|R^+\setminus R_{S^p}^+|=4p+4q-2$. If $\|\Gamma\|$ is equal to $\{\gamma_1,\gamma_5\}$ or equal to $\{\gamma_3,\gamma_5\}$ then $\mathscr P=2(p+q-1)$.
If $\|\Gamma\|=\{\gamma_2,\gamma_5\}$ then $\mathscr P=2p+2q-5$.

\subsection*{46} 

$\N(\SO(p))\subset\SO(p+1)\times\SO(p)$, diagonally, $4\leq p\leq 6$.

\paragraph{$\mathbf{(p=4)}$:}
$\Sigma=\{\alpha_1,\alpha_2,\alpha'_1,\alpha''_1\}$, $\Delta=\{D_1^+,D_1^-,D_2^+,D_2^-\}$, $\rho(D_1^+)=1,-1,1,1$, $\rho(D_1^-)=1,0,-1,-1$, $\rho(D_2^+)=-1,1,1,-1$ and $\rho(D_2^-)=-1,1,-1,1$ respectively on $\alpha_1$, $\alpha_2$, $\alpha'_1$, $\alpha''_1$, $S^p=\emptyset$, $\Delta'=\Delta\setminus\{D_1^-\}$, $\Sigma'=\{\alpha'_1,\alpha''_1\}$.

\[\begin{picture}(21000,2700)(-300,-1350)
\put(0,0){\usebox{\dynkinbtwo}}
\multiput(0,0)(1800,0){2}{\usebox{\aone}}
\multiput(4500,0)(2700,0){2}{\usebox{\aone}}
\put(0,1350){\line(0,-1){450}}
\multiput(4500,1350)(2700,0){2}{\line(0,-1){450}}
\put(0,1350){\line(1,0){7200}}
\multiput(2100,600)(1050,-1200){2}{\line(1,0){1050}}
\put(3150,600){\line(0,-1){1200}}
\multiput(1800,-1350)(5400,0){2}{\line(0,1){450}}
\put(1800,-1350){\line(1,0){5400}}
\put(0,600){\usebox{\toe}}
\put(1800,600){\usebox{\tow}}
\put(8700,0){\vector(1,0){3000}}
\put(13200,0){
\put(0,0){\usebox{\dynkinbtwo}}
\put(0,0){\usebox{\gcircle}}
\multiput(4500,0)(2700,0){2}{\usebox{\vertex}}
}
\end{picture}\]

Here $|R^+\setminus R_{S^p}^+|=6$. If $\|\Gamma\|=\{\alpha_1\}$ then $\mathscr P=3$.
If $\|\Gamma\|=\{\alpha_2\}$ then $\mathscr P=0$.

\paragraph{$\mathbf{(p=5)}$:}
$\Sigma=\{\alpha_1,\alpha_2,\alpha'_1,\alpha'_2,\alpha'_3\}$, $\Delta=\{D_1^+,D_1^-,D_2^+,D_2^-,(D'_2)^+\}$, $\rho(D_1^+)=1,-1,1,-1,1$, $\rho(D_1^-)=1,0,-1,1,-1$, $\rho(D_2^+)=-1,1,1,0,-1$, $\rho(D_2^-)=-1,1,-1,0,1$ and $\rho((D'_2)^+)=-1,0,0,1,0$ respectively on $\alpha_1,\ldots,\alpha'_3$, $S^p=\emptyset$, $\Delta'=\Delta\setminus\{(D'_2)^+\}$, $\Sigma'=\{\alpha_1,\alpha_2\}$.

\[\begin{picture}(22800,2850)(-300,-1500)
\put(0,0){\usebox{\dynkinbtwo}}
\put(4500,0){\usebox{\dynkinathree}}
\multiput(0,0)(1800,0){2}{\usebox{\aone}}
\multiput(4500,0)(1800,0){3}{\usebox{\aone}}
\put(0,1350){\line(0,-1){450}}
\multiput(4500,1350)(3600,0){2}{\line(0,-1){450}}
\put(0,1350){\line(1,0){8100}}
\multiput(2100,600)(1050,-1200){2}{\line(1,0){1050}}
\put(3150,600){\line(0,-1){1200}}
\multiput(0,-1500)(6300,0){2}{\line(0,1){600}}
\put(0,-1500){\line(1,0){6300}}
\multiput(1800,-1200)(6300,0){2}{\line(0,1){300}}
\put(1800,-1200){\line(1,0){4400}}
\put(6400,-1200){\line(1,0){1700}}
\multiput(0,600)(4500,0){2}{\usebox{\toe}}
\multiput(1800,600)(6300,0){2}{\usebox{\tow}}
\put(9600,0){\vector(1,0){3000}}
\put(14100,0){
\put(0,0){\usebox{\dynkinbtwo}}
\put(4500,0){\usebox{\dynkinathree}}
\put(6300,0){\usebox{\gcircle}}
}
\end{picture}\]

If $\|\Gamma\|=\{\alpha'_1\}$ or $\|\Gamma\|=\{\alpha'_3\}$ then $\mathscr P=10=|R^+|$.
If $\|\Gamma\|=\{\alpha'_2\}$ then $\mathscr P=4$. If $\|\Gamma\|=\{\alpha'_1\}$, the maximum of $-x_{\alpha_1}+x_{\alpha_2}-x_{\alpha'_1}+x_{\alpha'_2}$ is achieved in 
\[\theta=5\alpha_1+12\alpha_2+\alpha'_1+4\alpha'_2+9\alpha'_3.\]
 
\paragraph{$\mathbf{(p=6)}$:}
$\Sigma=\{\alpha_1,\alpha_2,\alpha_3,\alpha'_1,\alpha'_2,\alpha'_3\}$, $\Delta=\{D_1^+,D_1^-,D_2^+,D_3^+,D_3^-\}$, $\rho(D_1^+)=1,-1,0,0,1,0$, $\rho(D_1^-)=1,0,0,0,-1,0$, $\rho(D_2^+)=0,1,-1,1,-1,1$, $\rho(D_2^-)=-1,1,0,-1,1,-1$, $\rho(D_3^+)=0,-1,1,1,0,-1$ and $\rho(D_3^-)=0,-1,1,-1,0,1$ respectively on $\alpha_1,\ldots,\alpha'_3$, $S^p=\emptyset$, $\Delta'=\Delta\setminus\{D_1^-\}$, $\Sigma'=\{\alpha'_1,\alpha'_2,\alpha'_3\}$.

\[\begin{picture}(26400,3000)(-2100,-1500)
\put(-1800,0){\usebox{\dynkinbthree}}
\put(4500,0){\usebox{\dynkinathree}}
\multiput(-1800,0)(1800,0){3}{\usebox{\aone}}
\multiput(4500,0)(1800,0){3}{\usebox{\aone}}
\put(0,1500){\line(0,-1){600}}
\multiput(4500,1500)(3600,0){2}{\line(0,-1){600}}
\put(0,1500){\line(1,0){8100}}
\multiput(2100,600)(1050,-1200){2}{\line(1,0){1050}}
\put(3150,600){\line(0,-1){1200}}
\multiput(-1800,1200)(8100,0){2}{\line(0,-1){300}}
\multiput(-1800,1200)(6400,0){2}{\line(1,0){1700}}
\put(100,1200){\line(1,0){4300}}
\multiput(0,-1500)(6300,0){2}{\line(0,1){600}}
\put(0,-1500){\line(1,0){6300}}
\multiput(1800,-1200)(6300,0){2}{\line(0,1){300}}
\put(1800,-1200){\line(1,0){4400}}
\put(6400,-1200){\line(1,0){1700}}
\put(-1800,600){\usebox{\toe}}
\multiput(0,600)(4500,0){2}{\usebox{\toe}}
\multiput(1800,600)(6300,0){2}{\usebox{\tow}}
\put(9600,0){\vector(1,0){3000}}
\put(15900,0){
\put(-1800,0){\usebox{\dynkinbthree}}
\put(4500,0){\usebox{\dynkinathree}}
\put(-1800,0){\usebox{\gcircle}}
}
\end{picture}\]

Here $|R^+\setminus R_{S^p}^+|=15$. If $\|\Gamma\|=\{\alpha_1\}$ then $\mathscr P=5$.
If $\|\Gamma\|=\{\alpha_2\}$ then $\mathscr P=2$.
If $\|\Gamma\|=\{\alpha_3\}$ then $\mathscr P=3$.

\subsection*{47} 

$(\Sp(2)\times\Sp(2)\times\Sp(2p)\times\Sp(2q))\cdot\mathrm Z\subset\Sp(4)\times\Sp(2p+2)\times\Sp(2q+2)$, 
$p\geq0,q\geq1$, where $\Sp(2)\times\Sp(2)$ is embedded in $\Sp(4)$, 
then the first factor $\Sp(2)$ is also embedded into $\Sp(2p+2)$ (complementary to $\Sp(2p)$), 
and the second into $\Sp(2q+2)$ (complementary to $\Sp(2q)$).

\paragraph{$\mathbf{(p=0)}$:}
$\Sigma=\{\gamma_1=\alpha_1,\ \gamma_2=\alpha_2,\ \gamma_3=\alpha'_1,\ \gamma_4=\alpha''_1,\ \gamma_5=\alpha''_1+2\alpha''_2+\ldots+2\alpha''_q+\alpha''_{q+1}\}$, $\Delta=\{D_1^+,D_1^-,D_2^+,D_2^-,D''\}$, $\rho(D_1^+)=1,-1,1,1,0$, $\rho(D_1^-)=1,0,-1,-1,0$, $\rho(D_2^+)=-1,1,1,-1,0$, $\rho(D_2^-)=-1,1,-1,1,0$ respectively on $\gamma_1,\ldots,\gamma_5$ and $\rho(D'')=(\alpha''_2)^\vee$, $S^p=\{\alpha''_3,\ldots,\alpha''_{q+1}\}$. As distinguished subset of colors one can take $\Delta'=\Delta\setminus\{D_1^-\}$ or $\Delta'=\Delta\setminus\{D''\}$ getting, respectively, $\Sigma'=\{\gamma_3,\gamma_4,\gamma_5\}$ and $\Sigma'=\Sigma\setminus\{\gamma_5\}$.

\[\begin{picture}(14700,10200)(-300,-8850)
\put(0,0){\usebox{\dynkinbtwo}}
\multiput(0,0)(1800,0){2}{\usebox{\aone}}
\multiput(4500,0)(2700,0){2}{\usebox{\aone}}
\put(0,1350){\line(0,-1){450}}
\multiput(4500,1350)(2700,0){2}{\line(0,-1){450}}
\put(0,1350){\line(1,0){7200}}
\multiput(2100,600)(1050,-1200){2}{\line(1,0){1050}}
\put(3150,600){\line(0,-1){1200}}
\multiput(1800,-1350)(5400,0){2}{\line(0,1){450}}
\put(1800,-1350){\line(1,0){5400}}
\put(0,600){\usebox{\toe}}
\put(1800,600){\usebox{\tow}}
\put(7200,0){\usebox{\shortshortcm}}
\put(7350,-2650){\vector(0,-1){3000}}
\put(0,-7300){
\put(0,0){\usebox{\dynkinbtwo}}
\put(0,0){\usebox{\gcircle}}
\put(4500,0){\usebox{\vertex}}
\put(7200,0){\usebox{\shortshortcm}}
}
\end{picture}\]

Here $|R^+\setminus R_{S^p}^+|=4q+5$. If $\|\Gamma\|=\{\gamma_1,\gamma_5\}$ then $\mathscr P=2(q+1)$.
If $\|\Gamma\|=\{\gamma_2,\gamma_5\}$ then $\mathscr P=2q-1$.

\paragraph{$\mathbf{(p\geq1)}$:}
$\Sigma=\{\gamma_1=\alpha_1,\ \gamma_2=\alpha_2,\ \gamma_3=\alpha'_1,\ \gamma_4=\alpha'_1+2\alpha'_2+\ldots+2\alpha'_p+\alpha'_{p+1},\ \gamma_5=\alpha''_1,\ \gamma_6=\alpha''_1+2\alpha''_2+\ldots+2\alpha''_q+\alpha''_{q+1}\}$, $\Delta=\{D_1^+,D_1^-,D_2^+,D_2^-,D',D''\}$, $\rho(D_1^+)=1,-1,1,0,1,0$, $\rho(D_1^-)=1,0,-1,0,-1,0$, $\rho(D_2^+)=-1,1,1,0,-1,0$, $\rho(D_2^-)=-1,1,-1,0,1,0$ respectively on $\gamma_1,\ldots,\gamma_6$, $\rho(D')=(\alpha'_2)^\vee$ and $\rho(D'')=(\alpha''_2)^\vee$, $S^p=\{\alpha'_3,\ldots,\alpha'_{p+1},\;\alpha''_3,\ldots,\alpha''_{q+1}\}$. As distinguished subset of colors one can take either $\Delta'=\Delta\setminus\{D_1^-\}$ or $\Delta'=\Delta\setminus\{D'\}$ or $\Delta'=\Delta\setminus\{D''\}$ getting, respectively, $\Sigma'=\Sigma\setminus\{\gamma_1,\gamma_2\}$, $\Sigma'=\Sigma\setminus\{\gamma_4\}$ and $\Sigma'=\Sigma\setminus\{\gamma_6\}$.

\[\begin{picture}(21900,10200)(-300,-8850)
\put(0,0){\usebox{\dynkinbtwo}}
\multiput(0,0)(1800,0){2}{\usebox{\aone}}
\multiput(4500,0)(9900,0){2}{\usebox{\aone}}
\put(0,1350){\line(0,-1){450}}
\multiput(4500,1350)(9900,0){2}{\line(0,-1){450}}
\put(0,1350){\line(1,0){14400}}
\multiput(2100,600)(1050,-1200){2}{\line(1,0){1050}}
\put(3150,600){\line(0,-1){1200}}
\multiput(1800,-1350)(12600,0){2}{\line(0,1){450}}
\put(1800,-1350){\line(1,0){12600}}
\put(0,600){\usebox{\toe}}
\put(1800,600){\usebox{\tow}}
\multiput(4500,0)(9900,0){2}{
\put(0,0){\usebox{\edge}}
\put(1800,0){\usebox{\susp}}
\put(5400,0){\usebox{\leftbiedge}}
\put(1800,0){\usebox{\gcircle}}
}
\put(10950,-2650){\vector(0,-1){3000}}
\put(0,-7300){
\put(0,0){\usebox{\dynkinbtwo}}
\put(0,0){\usebox{\gcircle}}
\multiput(4500,0)(9900,0){2}{
\put(0,0){\usebox{\edge}}
\put(1800,0){\usebox{\susp}}
\put(5400,0){\usebox{\leftbiedge}}
\put(1800,0){\usebox{\gcircle}}
}
}
\end{picture}\]

Here $|R^+\setminus R_{S^p}^+|=4(p+q+1)$. If $\|\Gamma\|=\{\gamma_1,\gamma_4,\gamma_6\}$ then $\mathscr P=2p+2q-1$.
If $\|\Gamma\|=\{\gamma_2,\gamma_4,\gamma_6\}$ then $\mathscr P=2(p+q-1)$.

\subsection*{48} 

$(\Sp(4)\times\Sp(2p))\cdot \mathrm Z\subset\Sp(4)\times\Sp(2p+4)$, $p\geq1$, where $\Sp(4)$ is diagonal in $\Sp(4)\times\Sp(2p+4)$.

\paragraph{$\mathbf{(p=1)}$:}
$\Sigma=\{\alpha_1,\alpha_2,\alpha'_1,\alpha'_2,\alpha'_3\}$, $\Delta=\{D_1^+,D_1^-,D_2^+,D_2^-,(D'_2)^+\}$, $\rho(D_1^+)=1,-1,1,-1,1$, $\rho(D_1^-)=1,0,-1,1,-1$, $\rho(D_2^+)=-1,1,1,0,-1$, $\rho(D_2^-)=-1,1,-1,0,1$ and $\rho((D'_2)^+)=-1,0,0,1,-1$ respectively on $\alpha_1,\ldots,\alpha'_3$, $S^p=\emptyset$, $\Delta'=\Delta\setminus\{(D'_2)^+\}$, $\Sigma'=\{\alpha_1,\alpha_2\}$.

\[\begin{picture}(22800,2850)(-300,-1500)
\put(0,0){\usebox{\dynkinbtwo}}
\put(4500,0){\usebox{\dynkincthree}}
\multiput(0,0)(1800,0){2}{\usebox{\aone}}
\multiput(4500,0)(1800,0){3}{\usebox{\aone}}
\put(0,1350){\line(0,-1){450}}
\multiput(4500,1350)(3600,0){2}{\line(0,-1){450}}
\put(0,1350){\line(1,0){8100}}
\multiput(2100,600)(1050,-1200){2}{\line(1,0){1050}}
\put(3150,600){\line(0,-1){1200}}
\multiput(0,-1500)(6300,0){2}{\line(0,1){600}}
\put(0,-1500){\line(1,0){6300}}
\multiput(1800,-1200)(6300,0){2}{\line(0,1){300}}
\put(1800,-1200){\line(1,0){4400}}
\put(6400,-1200){\line(1,0){1700}}
\multiput(0,600)(4500,0){2}{\usebox{\toe}}
\multiput(1800,600)(6300,0){2}{\usebox{\tow}}
\put(6300,600){\usebox{\toe}}
\put(9600,0){\vector(1,0){3000}}
\put(14100,0){
\put(0,0){\usebox{\dynkinbtwo}}
\put(4500,0){\usebox{\dynkincthree}}
\put(6300,0){\usebox{\gcircle}}
}
\end{picture}\]

Here $|R^+\setminus R^+_{S^p}|=13$. If $\|\Gamma\|$ is equal to $\{\alpha'_1\}$, $\{\alpha'_2\}$ or $\{\alpha'_3\}$ then $\mathscr P$ equals, respectively, $1$, $4$, $8$.
 
\paragraph{$\mathbf{(p\geq1)}$:}
$\Sigma=\{\gamma_1=\alpha_1,\ \gamma_2=\alpha_2,\ \gamma_3=\alpha'_1,\ \gamma_4=\alpha'_2,\ \gamma_5=\alpha'_3,\ \gamma_6=\alpha'_3+2\alpha'_4+\ldots+2\alpha'_{p+1}+\alpha'_{p+2}\}$, $\Delta=\{D_1^+,D_1^-,D_2^+,D_2^-,(D'_2)^+,D'_4\}$, $\rho(D_1^+)=1,-1,1,-1,1,0$, $\rho(D_1^-)=1,0,-1,1,-1,0$, $\rho(D_2^+)=-1,1,1,0,-1,0$, $\rho(D_2^-)=-1,1,-1,0,1,0$ and $\rho((D'_2)^+)=-1,0,0,1,0,-1$ respectively on $\gamma_1,\ldots,\gamma_6$, $\rho(D'_4)=(\alpha'_4)^\vee$, $S^p=\{\alpha'_5,\ldots,\alpha'_{p+1}\}$, $\Delta'=\Delta\setminus\{(D'_2)^+,D'_4\}$, $\Sigma'=\{\gamma_1,\gamma_2\}$.

\[\begin{picture}(17400,10200)(-300,-9000)
\put(0,0){\usebox{\dynkinbtwo}}
\put(4500,0){\usebox{\dynkinathree}}
\multiput(0,0)(1800,0){2}{\usebox{\aone}}
\multiput(4500,0)(1800,0){3}{\usebox{\aone}}
\multiput(0,1500)(4500,0){2}{\line(0,-1){600}}
\put(8100,1500){\line(0,-1){50}}
\put(8100,1250){\line(0,-1){350}}
\put(0,1500){\line(1,0){8100}}
\multiput(2100,600)(1050,-1200){2}{\line(1,0){1050}}
\put(3150,600){\line(0,-1){1200}}
\multiput(0,-1500)(6300,0){2}{\line(0,1){600}}
\put(0,-1500){\line(1,0){6300}}
\multiput(1800,-1200)(6300,0){2}{\line(0,1){300}}
\put(1800,-1200){\line(1,0){4400}}
\put(6400,-1200){\line(1,0){1700}}
\multiput(0,600)(4500,0){2}{\usebox{\toe}}
\multiput(1800,600)(6300,0){2}{\usebox{\tow}}
\put(6300,600){\usebox{\tobe}}
\put(8100,0){\usebox{\shortcm}}
\put(8700,-2700){\vector(0,-1){3000}}
\put(0,-7800){
\put(0,0){\usebox{\dynkinbtwo}}
\put(4500,0){\usebox{\dynkinathree}}
\put(6300,0){\usebox{\gcircle}}
\put(8100,0){\usebox{\shortcm}}
}
\end{picture}\]

Here $|R^+\setminus R^+_{S^p}|=8p+4$. If $\|\Gamma\|$ is equal to $\{\gamma_3\}$, $\{\gamma_4\}$, $\{\gamma_5\}$ or $\{\gamma_6\}$ then $\mathscr P$ equals, respectively, $2p-2$, $2p+1$, $2p+6$, $8p-1$.
 
\subsection*{49} 

$\SL(p)\times\GL(1)\subset \SL(p)\times\SL(p+1)$, $p\geq2$, where $\SL(p)$ is diagonal in $\SL(p)\times\SL(p+1)$:
$\Sigma=\{\alpha_1,\ldots,\alpha_{p-1},\;\alpha'_1,\ldots,\alpha'_p\}$, $\Delta=\{(D'_i)^+\ :\ 1\leq i\leq p\}\cup\{(D'_i)^-\ :\ 1\leq i\leq p\}$, $\rho((D'_i)^+)=-1,1,-1,1$ respectively on $\alpha_{p-i},\alpha_{p-i+1},\alpha'_{i-1},\alpha'_i$ and $\rho((D'_i)^-)=1,-1,1,-1$ respectively on $\alpha_{p-i},\alpha_{p-i+1},\alpha'_{i},\alpha'_{i+1}$ for all $1\leq i\leq p$, and $0$ on the other spherical roots, $S^p=\emptyset$, $\Delta'=\Delta\setminus\{(D'_1)^+,(D'_{p})^-\}$, $\Sigma'=\{\alpha_1,\ldots,\alpha_{p-1}\}$.

\[\begin{picture}(15900,11100)(-300,-9300)
\put(0,0){\usebox{\edge}}
\put(1800,0){\usebox{\susp}}
\multiput(0,0)(1800,0){2}{\usebox{\aone}}
\put(5400,0){\usebox{\aone}}
\put(5400,600){\usebox{\tow}}
\put(1800,600){\usebox{\tow}}
\put(8100,0){
\put(0,0){\usebox{\edge}}
\put(1800,0){\usebox{\susp}}
\multiput(0,0)(5400,0){2}{\multiput(0,0)(1800,0){2}{\usebox{\aone}}}
\multiput(5400,600)(1800,0){2}{\usebox{\tow}}
\put(1800,600){\usebox{\tow}}
\put(5400,0){\usebox{\edge}}
}
\multiput(0,900)(15300,0){2}{\line(0,1){900}}
\put(0,1800){\line(1,0){15300}}
\multiput(1800,900)(11700,0){2}{\line(0,1){600}}
\put(1800,1500){\line(1,0){11700}}
\multiput(5400,900)(4500,0){2}{\line(0,1){300}}
\put(5400,1200){\line(1,0){4500}}
\multiput(0,-900)(13500,0){2}{\line(0,-1){1200}}
\put(0,-2100){\line(1,0){13500}}
\put(1800,-900){\line(0,-1){900}}
\put(1800,-1800){\line(1,0){9900}}
\multiput(11700,-1800)(0,300){3}{\line(0,1){150}}
\multiput(3600,-1500)(0,300){2}{\line(0,1){150}}
\put(3600,-1500){\line(1,0){6300}}
\put(9900,-1500){\line(0,1){600}}
\multiput(5400,-1200)(2700,0){2}{\line(0,1){300}}
\put(5400,-1200){\line(1,0){2700}}
\put(7950,-3300){\vector(0,-1){3000}}
\put(0,-8400){
\put(0,0){\usebox{\edge}}
\put(1800,0){\usebox{\susp}}
\put(8100,0){
\put(0,0){\usebox{\edge}}
\put(1800,0){\usebox{\susp}}
\multiput(0,0)(7200,0){2}{\usebox{\wcircle}}
\put(5400,0){\usebox{\edge}}
\put(0,0){\multiput(0,0)(25,25){13}{\circle*{70}}\multiput(300,300)(300,0){22}{\multiput(0,0)(25,-25){7}{\circle*{70}}}\multiput(450,150)(300,0){22}{\multiput(0,0)(25,25){7}{\circle*{70}}}\multiput(7200,0)(-25,25){13}{\circle*{70}}}
}
}
\end{picture}\]

This is the {\it comodel} case, the pairing $\langle\rho(D),\gamma\rangle$ for $D\in\Delta$ and $\gamma\in\Sigma$ is equal to the {\it model} case, the case number 31. If $\|\Gamma\|=\{\alpha'_1\}$ or $\|\Gamma\|=\{\alpha'_p\}$ then $\mathscr P=p^2=|R^+|$. If $\|\Gamma\|=\{\alpha'_k\}$ then $\mathscr P=p^2-2(k-1)(p-k+2)$ if $1<k\leq p/2$ and $\mathscr P=p^2-2(p-k)(k+1)$ if $p/2<k<p$. 
If $\|\Gamma\|=\{\alpha'_p\}$, the maximum of $x_{\alpha'_1}$ is achieved in 
\[\theta=\sum_{i=1}^{p-1}i(i+1)\alpha_i+\sum_{i=1}^p i^2\alpha'_{p+i-1}.\]

\subsection*{50} 

$\N(\SO(p))\subset\SO(p)\times\SO(p+1)$ diagonally, $p\geq7$.

\paragraph{$\mathbf{(p=2q-1)}$:}
$\Sigma=\{\alpha_1,\ldots,\alpha_{q-1},\;\alpha'_1,\ldots,\alpha'_q\}$,
$\Delta=\{D_1,\ldots,D_p\}$, $\rho(D_{2j-1})=1,-1,-1,1$ resp.\ on $\alpha_{j-1},\alpha_{j},\alpha'_{j-1},\alpha'_j$ for all $1\leq j\leq q-2$, $\rho(D_{2j})=-1,1,1,-1$ resp.\ on $\alpha_{j-1},\alpha_{j},\alpha'_{j},\alpha'_{j+1}$ for all $1\leq j\leq q-3$, $\rho(D_{p-3})=-1,1,1,-1,-1$ resp.\ on $\alpha_{q-3},\alpha_{q-2},\alpha'_{q-2},\alpha'_{q-1},\alpha'_q$, $\rho(D_{p-2})=1,-1,-1,1,1$ resp.\ on $\alpha_{q-2},\alpha_{q-1},\alpha'_{q-2},\alpha'_{q-1},\alpha'_q$, $\rho(D_{p-1})=-1,1,1,-1$ resp.\ on $\alpha_{q-2},\alpha_{q-1},\alpha'_{q-1},\alpha'_q$ and $\rho(D_{p})=-1,1,-1,1$ resp.\ on $\alpha_{q-2},\alpha_{q-1},\alpha'_{q-1},\alpha'_q$, and $0$ on the other spherical roots, $S^p=\emptyset$, $\Delta'=\Delta\setminus\{D_1\}$, $\Sigma'=\{\alpha_1,\ldots,\alpha_{q-1}\}$.

\[\begin{picture}(17700,12900)(1500,-10200)
\put(1800,0){\usebox{\susp}}
\put(1800,0){\usebox{\aone}}
\put(5400,0){\usebox{\aone}}
\put(11700,0){
\put(0,0){\usebox{\edge}}
\put(1800,0){\usebox{\susp}}
\multiput(0,0)(1800,0){2}{\usebox{\aone}}
\put(5400,0){\usebox{\aone}}
}
\put(5400,0){\usebox{\edge}}
\put(7200,0){\usebox{\rightbiedge}}
\put(17100,0){\usebox{\bifurc}}
\multiput(7200,0)(1800,0){2}{\usebox{\aone}}
\multiput(18300,-1200)(0,2400){2}{\usebox{\aone}}
\put(7200,2700){\line(0,-1){1800}}
\put(7200,2700){\line(1,0){12000}}
\put(19200,2700){\line(0,-1){3300}}
\multiput(19200,1800)(0,-2400){2}{\line(-1,0){600}}
\put(9000,2400){\line(0,-1){1500}}
\put(9000,2400){\line(1,0){9900}}
\put(18900,2400){\line(0,-1){500}}
\put(18900,1700){\line(0,-1){1100}}
\put(18900,600){\line(-1,0){300}}
\multiput(5400,2100)(11700,0){2}{\line(0,-1){1200}}
\put(5400,2100){\line(1,0){1700}}
\put(7300,2100){\line(1,0){1600}}
\put(9100,2100){\line(1,0){8000}}
\multiput(1800,1500)(11700,0){2}{\line(0,-1){600}}
\put(1800,1500){\line(1,0){3500}}
\multiput(5500,1500)(1800,0){2}{\line(1,0){1600}}
\put(9100,1500){\line(1,0){4400}}
\put(9000,-2400){\line(0,1){1500}}
\put(9000,-2400){\line(1,0){9300}}
\put(18300,-2400){\line(0,1){300}}
\multiput(7200,-2100)(9900,0){2}{\line(0,1){1200}}
\put(7200,-2100){\line(1,0){1700}}
\put(9100,-2100){\line(1,0){8000}}
\put(5400,-1800){\line(0,1){900}}
\put(5400,-1800){\line(1,0){1700}}
\put(7300,-1800){\line(1,0){1600}}
\put(9100,-1800){\line(1,0){6200}}
\multiput(15300,-1800)(0,300){3}{\line(0,1){150}}
\multiput(3600,-1500)(0,300){3}{\line(0,1){150}}
\put(3600,-1500){\line(1,0){1700}}
\multiput(5500,-1500)(1800,0){2}{\line(1,0){1600}}
\put(9100,-1500){\line(1,0){4400}}
\put(13500,-1500){\line(0,1){600}}
\multiput(1800,-1200)(9900,0){2}{\line(0,1){300}}
\put(1800,-1200){\line(1,0){1700}}
\multiput(3700,-1200)(1800,0){3}{\line(1,0){1600}}
\put(9100,-1200){\line(1,0){2600}}
\put(1800,600){\usebox{\toe}}
\put(5400,0){\multiput(0,600)(1800,0){2}{\usebox{\toe}}}
\put(9000,600){\usebox{\tow}}
\multiput(13500,600)(3600,0){2}{\usebox{\tow}}
\put(18300,1800){\usebox{\tosw}}
\put(18300,-600){\usebox{\tonw}}
\put(10350,-3600){\vector(0,-1){3000}}
\put(0,-7800){
\put(1800,0){\usebox{\susp}}
\put(11700,0){
\put(0,0){\usebox{\edge}}
\put(1800,0){\usebox{\susp}}
\put(0,0){\usebox{\gcircle}}
}
\put(5400,0){\usebox{\edge}}
\put(7200,0){\usebox{\rightbiedge}}
\put(17100,0){\usebox{\bifurc}}
}
\end{picture}\]

Let us solve the linear program \eqref{AYC} for the reduced elementary spherical skeletons with $\Gamma$ of cardinality one and support in $\Sigma\setminus\Sigma'$. Set $y_i=y_{D_i}$ for $1\leq i\leq p$ and $y_{p+1}=y_D$ where $D$ is the unique element of $\Gamma$.   

Here $|R^+|=\frac{p(p-1)}2$. If $\|\Gamma\|=\{\alpha'_1\}$, we have 
\[y_{p-2}\geq y_{p-1}+y_p+1\]
\[y_{p-4}\geq y_{p-3}+y_{p-2}-y_{p-1}-y_p-1\geq y_{p-3}\]
and (recursively) for all $1\leq j\leq q-3$
\[y_{2j-1}\geq y_{2j}+y_{2j+1}-y_{2j+2}\geq y_{2j}.\]
On the other hand, we have
\[y_{p-3}\geq y_{p-2}\]
and (recursively) for all $1\leq j\leq q-3$
\[y_{2j}\geq y_{2j+1}+y_{2j+2}-y_{2j+3}\geq y_{2j+1},\]
as well as
\[y_{p+1}\geq y_{1}+y_{2}-y_{3}\geq y_{1}.\]
Then $\mathscr P=p-1$.

If $\|\Gamma\|=\{\alpha'_{k}\}$ for $2\leq k\leq q-2$, we have as above
\[y_{p-2}\geq y_{p-1}+y_p+1\]
and for all $1\leq j\leq q-2$
\[y_{2j-1}\geq  y_{2j},\]
and similarly for $k \leq j\leq q-2$
\[y_{2j}\geq y_{2j+1}.\]
On the other hand, we have
\[y_4\geq y_3+y_2-y_1\geq 2y_2+1\]
and (recursively) for all $3\leq j\leq k$
\[y_{2j}\geq y_{2j-1}+y_{2}-y_{1}\geq y_{2j-2}+y_2+1\geq j\,y_2+(j-1),\] 
as well as
\[y_{2k+1}+y_{p+1}\geq y_{2k}+y_{2k-1}-y_{2k-2}-1\geq y_{2k}+y_1.\]
Then $\mathscr P=p+(k-1)^2-5$.

If $\|\Gamma\|=\{\alpha'_{q}\}$ (the case $\alpha'_{q-1}$ is equivalent by symmetry), we have again
\[y_{p-2}\geq y_{p-1}+y_p+1\]
and for all $1\leq j\leq q-2$
\[y_{2j-1}\geq  y_{2j}.\]
On the other hand, we have
\[y_4\geq y_3+y_2-y_1\geq 2y_2+1\]
and (recursively) for all $3\leq j\leq q-2$
\[y_{2j}\geq y_{2j-1}+y_{2}-y_{1}\geq y_{2j-2}+y_2+1\geq j\,y_2+(j-1),\] 
as well as
\[y_{2q-3}\geq y_{2q-4}+y_{1}+1\geq (q-2)y_{2}+y_1+(q-2),\]
\[2\,y_{2q-2}+y_{2q}\geq 2\,y_{2q-3}-y_{2q-5}-2\geq y_{2q-4}+y_2+y_1\geq (q-1)y_2+y_1+(q-3),\]
\[2\,y_{2q-1}\geq y_{2q-3}+y_2\geq (q-1)y_2+y_1+(q-2),\]
\[y_{2q}\geq 2\,y_1+1.\]
Then $\mathscr P=(p-1)(p-3)/4$.

\paragraph{$\mathbf{(p=2q)}$:}
$\Sigma=\{\alpha_1,\ldots,\alpha_{q},\;\alpha'_1,\ldots,\alpha'_q\}$,
$\Delta=\{D_1,\ldots,D_p\}$, $\rho(D_{2j-1})=-1,1,1,-1$ resp.\ on $\alpha_{j-1},\alpha_{j},\alpha'_{j-1},\alpha'_{j}$ for all $1\leq j\leq q-2$, $\rho(D_{2j})=1,-1,-1,1$ resp.\ on $\alpha_{j},\alpha_{j+1},\alpha'_{j-1},\alpha'_{j}$ for all $1\leq j\leq q-2$, $\rho(D_{p-3})=-1,1,1,-1,-1$ resp.\ on $\alpha_{q-2},\alpha_{q-1},\alpha'_{q-2},\alpha'_{q-1},\alpha'_q$, $\rho(D_{p-2})=1,-1,-1,1,1$ resp.\ on $\alpha_{q-1},\alpha_{q},\alpha'_{q-2},\alpha'_{q-1},\alpha'_q$, $\rho(D_{p-1})=-1,1,-1,1$ resp.\ on $\alpha_{q-1},\alpha_{q},\alpha'_{q-1},\alpha'_q$ and $\rho(D_{p})=-1,1,1,-1$ resp.\ on $\alpha_{q-1},\alpha_{q},\alpha'_{q-1},\alpha'_q$, and $0$ on the other spherical roots, $S^p=\emptyset$, $\Delta'=\Delta\setminus\{D_1\}$, $\Sigma'=\{\alpha'_1,\ldots,\alpha'_{q}\}$.

\[\begin{picture}(19500,12900)(-300,-10200)
\multiput(0,0)(11700,0){2}{
\put(0,0){\usebox{\edge}}
\put(1800,0){\usebox{\susp}}
\multiput(0,0)(1800,0){2}{\usebox{\aone}}
\put(5400,0){\usebox{\aone}}
}
\put(5400,0){\usebox{\edge}}
\put(7200,0){\usebox{\rightbiedge}}
\put(17100,0){\usebox{\bifurc}}
\multiput(7200,0)(1800,0){2}{\usebox{\aone}}
\multiput(18300,-1200)(0,2400){2}{\usebox{\aone}}
\put(7200,2700){\line(0,-1){1800}}
\put(7200,2700){\line(1,0){12000}}
\put(19200,2700){\line(0,-1){3300}}
\multiput(19200,1800)(0,-2400){2}{\line(-1,0){600}}
\put(9000,2400){\line(0,-1){1500}}
\put(9000,2400){\line(1,0){9900}}
\put(18900,2400){\line(0,-1){500}}
\put(18900,1700){\line(0,-1){1100}}
\put(18900,600){\line(-1,0){300}}
\multiput(5400,2100)(11700,0){2}{\line(0,-1){1200}}
\put(5400,2100){\line(1,0){1700}}
\put(7300,2100){\line(1,0){1600}}
\put(9100,2100){\line(1,0){8000}}
\multiput(1800,1500)(11700,0){2}{\line(0,-1){600}}
\put(1800,1500){\line(1,0){3500}}
\multiput(5500,1500)(1800,0){2}{\line(1,0){1600}}
\put(9100,1500){\line(1,0){4400}}
\multiput(0,1200)(11700,0){2}{\line(0,-1){300}}
\put(0,1200){\line(1,0){1700}}
\put(1900,1200){\line(1,0){3400}}
\multiput(5500,1200)(1800,0){2}{\line(1,0){1600}}
\put(9100,1200){\line(1,0){2600}}
\put(9000,-2400){\line(0,1){1500}}
\put(9000,-2400){\line(1,0){9300}}
\put(18300,-2400){\line(0,1){300}}
\multiput(7200,-2100)(9900,0){2}{\line(0,1){1200}}
\put(7200,-2100){\line(1,0){1700}}
\put(9100,-2100){\line(1,0){8000}}
\put(5400,-1800){\line(0,1){900}}
\put(5400,-1800){\line(1,0){1700}}
\put(7300,-1800){\line(1,0){1600}}
\put(9100,-1800){\line(1,0){6200}}
\multiput(15300,-1800)(0,300){3}{\line(0,1){150}}
\multiput(3600,-1500)(0,300){3}{\line(0,1){150}}
\put(3600,-1500){\line(1,0){1700}}
\multiput(5500,-1500)(1800,0){2}{\line(1,0){1600}}
\put(9100,-1500){\line(1,0){4400}}
\put(13500,-1500){\line(0,1){600}}
\multiput(1800,-1200)(9900,0){2}{\line(0,1){300}}
\put(1800,-1200){\line(1,0){1700}}
\multiput(3700,-1200)(1800,0){3}{\line(1,0){1600}}
\put(9100,-1200){\line(1,0){2600}}
\multiput(0,0)(5400,0){2}{\multiput(0,600)(1800,0){2}{\usebox{\toe}}}
\put(9000,600){\usebox{\tow}}
\multiput(13500,600)(3600,0){2}{\usebox{\tow}}
\put(18300,1800){\usebox{\tosw}}
\put(18300,-600){\usebox{\tonw}}
\put(9750,-3600){\vector(0,-1){3000}}
\put(0,-9000){
\multiput(0,0)(11700,0){2}{
\put(0,0){\usebox{\edge}}
\put(1800,0){\usebox{\susp}}
}
\put(0,0){\usebox{\gcircle}}
\put(5400,0){\usebox{\edge}}
\put(7200,0){\usebox{\rightbiedge}}
\put(17100,0){\usebox{\bifurc}}
}
\end{picture}\]

Here $|R^+|=\frac{p(p-1)}2$. If $\|\Gamma\|=\{\alpha_1\}$, we have 
\[y_{p-3}\geq y_{p-2}\]
and (recursively) for all $1\leq j\leq q-2$
\[y_{2j-1}\geq y_{2j}+y_{2j+1}-y_{2j+2}\geq y_{2j}.\]
On the other hand, we have
\[y_{p-2}\geq y_{p-1}+y_p+1,\]
\[y_{p-4}\geq y_{p-3}+y_{p-2}-y_{p-1}-y_p-1\geq y_{p-3}\]
and (recursively) for all $1\leq j\leq q-3$
\[y_{2j}\geq y_{2j+1}+y_{2j+2}-y_{2j+3}\geq y_{2j+1},\]
as well as
\[y_{p+1}\geq y_{1}+y_{2}-y_{3}\geq y_{1}.\]
Then $\mathscr P=p-1$.

If $\|\Gamma\|=\{\alpha_{k}\}$ for $2\leq k\leq q-2$, we have as above
for all $1\leq j\leq q-1$
\[y_{2j-1}\geq y_{2j},\]
and similarly 
\[y_{p-2}\geq y_{p-1}+y_p+1,\]
and for $k\leq j\leq q-2$
\[y_{2j}\geq y_{2j+1}.\]
On the other hand, we have
\[y_4\geq y_3+y_2-y_1\geq 2y_2+1\]
and (recursively) for all $3\leq j\leq k$
\[y_{2j}\geq y_{2j-1}+y_{2}-y_{1}\geq y_{2j-2}+y_2+1\geq j\,y_2+(j-1),\] 
as well as
\[y_{2k+1}+y_{p+1}\geq y_{2k}+y_{1}\geq k\,y_{2}+y_1+(i-1).\]
Then $\mathscr P=p+(k-1)^2-5$.

If $\|\Gamma\|=\{\alpha_{q-1}\}$, similarly we have 
for all $1\leq j\leq q-1$
\[y_{2j-1}\geq y_{2j},\]
\[y_{p-2}\geq y_{p-1}+y_p+1,\]
and for all $2\leq j\leq q-1$
\[y_{2j}\geq j\,y_2+(j-1),\] 
as well as
\[y_{p-1}+y_p+y_{p+1}\geq y_{p-2}+y_{1}\geq (q-1)y_{2}+y_1+(q-2).\]
Then $\mathscr P=p+(q-2)^2-5=(p^2-4p-4)/4$.

If $\|\Gamma\|=\{\alpha_{q}\}$, we have again
for all $1\leq j\leq q-1$
\[y_{2j-1}\geq y_{2j},\]
and for all $1\leq j\leq q-2$
\[y_{2j+1}\geq  y_{2j}+y_1+1,\]
\[y_{p-1}+y_p\geq y_{p-2}+y_1,\]
\[y_{p+1}\geq y_1,\]
as well as for all $1\leq j\leq q-1$
\[y_{2j}\geq j\,y_2+(j-1),\] 
and
\[y_{p-1}+y_p\geq (q-1)y_{2}+y_1+(q-1).\]
Then $\mathscr P=p(p-4)/4$.

\subsection*{The cases where equality holds}

From the previous computations, the list of the reduced elementary complete spherical skeletons $\mathcal R=(\Delta,S^p,\Sigma,\Gamma)$, with $\Gamma$ of minimal support and underlying non-symmetric indecomposable spherically closed spherical system corresponding to a reductive spherical subgroup, with $\mathscr P(\mathcal R)=|R^+\setminus R_{S^p}^+|$, is the following: 

\begin{itemize}

\item[-] case 31, $\|\Gamma\|=\{\gamma_1\}$ or $\|\Gamma\|=\{\gamma_{2p-1}\}$,

\item[-] case 32 for $p=2$, $\|\Gamma\|=\{\gamma_1\}$,

\item[-] case 34, $\|\Gamma\|=\{\gamma_1\}$,

\item[-] case 35, $\|\Gamma\|=\{\gamma\}$,

\item[-] case 36, $\|\Gamma\|=\{\gamma_2\}$,

\item[-] case 38, any $\|\Gamma\|\subset\{\gamma_1,\gamma_2,\gamma_3\}$ of cardinality 2,

\item[-] case 41, $\|\Gamma\|=\{\gamma\}$,

\item[-] case 42 for $p=0$, $\|\Gamma\|=\{\gamma_2\}$,

\item[-] case 43 for $p=q=r=0$, any $\|\Gamma\|\subset\{\alpha_1,\alpha'_1,\alpha''_1\}$ of cardinality 2,

\item[-] case 43 for $p\neq0$ and $q=r=0$, $\|\Gamma\|=\{\gamma_1,\gamma_4\}$ or $\|\Gamma\|=\{\gamma_2,\gamma_4\}$,

\item[-] case 43 for $p,q\neq0$ and $r=0$, $\|\Gamma\|=\{\gamma_3,\gamma_5\}$,

\item[-] case 46 for $p=5$, $\|\Gamma\|=\{\alpha'_1\}$ or $\|\Gamma\|=\{\alpha'_3\}$,

\item[-] case 49, $\|\Gamma\|=\{\alpha'_1\}$ or $\|\Gamma\|=\{\alpha'_p\}$.

\end{itemize}

These are all spherical skeletons of spherical modules, indeed, they can all be found in \cite[List L]{G} with numbers, respectively: 24, 38 for $n=2$, 16, 15, 38 for $n>2$, 20, 18, 10, 35, 40, 42, 13, 28. Notice also that the point $\theta$, exibited case by case in the above computations, is the unique point where the maximum of the functional is attained by Corollary~\ref{cor:1.2mfs}.

To complete the proof of  Theorem~\ref{MT} we adapt the argument of \cite[Section~10]{GH}. Let $\mathcal R=(\Delta,S^p,\Sigma,\Gamma)$ be a complete spherical skeleton with underlying spherically closed spherical system $\mathcal S=(\Delta,S^p,\Sigma)$ corresponding to a reductive spherical subgroup with $\mathscr P(\mathcal R)=|R^+\setminus R_{S^p}^+|$. We may assume that, for all $D\in\Gamma$, $\rho(D)$ is not identically zero on $\Sigma$. Let us consider $\mathcal R^r=(\Delta,S^p,\Sigma,\Gamma^r)$, we necessarily have $\mathscr P(\mathcal R)=\mathscr P(\mathcal R^r)=|R^+\setminus R_{S^p}^+|$. The reduced elementary spherical skeleton $\mathcal R^r$ decomposes as $\mathcal R^r=\mathcal R_1^r\times\cdots\times\mathcal R_k^r$ with underlying spherically closed spherical system $\mathcal S$ which correspondingly decomposes as $\mathcal S=\mathcal S_1\times\cdots\times\mathcal S_k$ where, for all $1\leq i\leq k$, $\mathcal S_i$ belongs to the list of \cite[Section 3]{BP15} and $\Gamma^r=\Gamma_1^r\sqcup\ldots\sqcup\Gamma_k^r$. For all $1\leq i\leq k$, $\mathcal R_i^r$ is a complete spherical skeleton satisfying the equality in \eqref{ineq}, $\mathcal P(\mathcal R_i^r)=|R_i^+\setminus R_{i,S_i^p}^+|$.

We claim that the support of $\Gamma_i^r$ is minimal, for all $1\leq i\leq k$. Indeed, every subset of $\Gamma_i^r$ of minimal support must be one of the above list (or symmetric), since equality must still hold, so we are left to exclude the following non-symmetric cases where $\mathscr P(\mathcal R_i^r)$ is strictly less than $|R_i^+\setminus R_{i,S_i^p}^+|$:

\begin{itemize}

\item[-] case 31, $\|\Gamma_i^r\|=\{\gamma_1,\gamma_{2p-1}\}$, $\mathscr P=2p<2p^2+p$,

\item[-] case 38, $\|\Gamma_i^r\|=\{\gamma_1,\gamma_2,\gamma_3\}$, $\mathscr P=6<11$,

\item[-] case 43 for $p=q=r=0$, any $\|\Gamma_i^r\|=\{\alpha_1,\alpha'_1,\alpha''_1\}$, $\mathscr P=0<3$,

\item[-] case 43 for $p\neq0$ and $q=r=0$, $\|\Gamma_i^r\|=\{\gamma_1,\gamma_2,\gamma_4\}$, $\mathscr P=2p-1<4p+2$

\item[-] case 46 for $p=5$, $\|\Gamma_i^r\|=\{\alpha'_1,\alpha'_3\}$, $\mathscr P=1<10$,

\item[-] case 49, $\|\Gamma_i^r\|=\{\alpha'_1,\alpha'_p\}$, $\mathscr P=0<p^2$.

\end{itemize}

Therefore, every $\mathcal R_i^r$ is the spherical skeleton of a spherical module, and by the way also $\mathcal R^r$ is.

We claim that $\mathcal R^e=\mathcal R^r$, that is, for every $\gamma\in\Sigma$, $\sum_{D\in\Gamma}\langle\rho(D),\gamma\rangle=-1$. Indeed, we can assume that $\mathcal R^r$ is indecomposable, hence one of the above list of cases (or symmetric). If we construct a new spherical skeleton $\mathcal R'$ by duplicating one of the elements $D\in\Gamma^r$, we keep the same polytope $\mathcal Q^*_{\mathcal R'}=\mathcal Q^*_{\mathcal R^r}$ and $\mathscr P(\mathcal R')=\mathscr P(\mathcal R^r)+\langle\rho(D),\theta\rangle$, where $\theta$ is the vertex of $\mathcal Q^*_{\mathcal R^r}\cap\mathrm{cone}(\Sigma)$ where the maximum $\mathscr P(\mathcal R^r)$ is achieved. We get $\mathscr P(\mathcal R')$ strictly smaller than $\mathscr P(\mathcal R^r)$, since in all cases the support of $\theta$ contains $\|\Gamma^r\|$. Actually, in all cases the support of $\theta$ equals the whole $\Sigma$, thanks to Lemma~\ref{lemma:interior}.

We claim that $\mathcal R$ is elementary, hence reduced. If $\mathcal R\neq\mathcal R^e=\mathcal R^r$ we can reconstruct the elements of $\Gamma$ by recombining the elements of $\Gamma^e$ step by step, in particular we can start by combining together only two elements in $\Gamma^e$ and leaving unchanged all the others. Therefore, we can assume that $\mathcal R^e$ is either an indecomposable spherical skeleton of the above list or the product of two indecomposable spherical skeletons $\mathcal R_1^e\times\mathcal R_2^e$ of the above list (or symmetric) and the two combining elements belong to $\Gamma_1^e$ and $\Gamma_2^e$, respectively. In the former case, we have the following possibilities, with $\Gamma$ of cardinality 1 and $\|\Gamma\|$ of cardinality 2, where $\mathscr P(\mathcal R)$ is always strictly less than $|R^+\setminus R_{S^p}^+|$:   

\begin{itemize}

\item[-] case 38, $\mathscr P=10<11$,

\item[-] case 43 for $p=q=r=0$, $\mathscr P=2<3$,

\item[-] case 43 for $p\neq0$ and $q=r=0$, $\mathscr P=4p+1<4p+2$,

\item[-] case 43 for $p,q\neq0$ and $r=0$, $\mathscr P=4p+4q<4p+4q+1$.

\end{itemize}

In the latter case, as in \cite[Lemma 10.8]{GH}, $\mathscr P(\mathcal R_i^e)$ achieves its optimal value only in $\theta_i$ but $(\theta_1,\theta_2)\not\in\mathcal Q^*_{\mathcal R}\cap\mathrm{cone}(\Sigma)$, since in all cases
\begin{equation}
\mbox{\textit{$x_{\gamma}(\theta_i)=1$ if $\gamma\in\|\Gamma_i^e\|$.}}
\end{equation}  

Therefore, $\mathcal R$ is the spherical skeleton of a spherical module.

\end{document}